\numberwithin{equation}{section}
\newtheorem{theorem}{Theorem}[section]
\newtheorem{prop}[theorem]{Proposition}
\newtheorem{lemma}[theorem]{Lemma}
\newtheorem{rem}[theorem]{Remark}
\newtheorem{exmp}{Example}
\newtheorem{cor}[theorem]{Corollary}
\newcounter{casec}
\newenvironment{case}[1]{\addtocounter{casec}{1} {\bf Case} \arabic{casec}: #1.}{\newline}
\newcommand{\hol}[3][]{\chi_{#1} \left(#2, #3\right)}
\newcommand{\holun}[2]{\chi \left(#1, #2\right)}
\newcommand{\G}[2][\mathcal H]{\mathcal G_{#2} \left(#1\right)}
\newcommand{\PH}[2][\mathcal H]{\mathcal P_{#2} \left(#1\right)}
\newcommand{\bh}[1][\mathcal H]{\mathcal B \left(#1\right)}
\newcommand{\sh}[1][\mathcal H]{\mathcal S \left(#1\right)}
\newcommand{\fs}[1][\mathcal H]{\mathcal F_s \left(#1\right)}
\DeclareMathOperator{\tr}{tr}
\DeclareMathOperator{\ran}{ran}
\DeclareMathOperator{\rk}{rank}
\DeclareMathOperator{\re}{Re}
\DeclareMathOperator{\diag}{diag}
\newcommand{\dg}[2]{d (#1, #2)}
\newcommand{\NN}{\mathbb N}
\newcommand{\RR}{\mathbb R}
\newcommand{\CC}{\mathbb C}
\newcommand{\ha}{\mathbb H}
\newcommand{\HH}{\mathcal H}
\newcommand{\KK}{\mathcal K}
\newcommand{\mm}[1][m]{\mathcal M_{#1}}
\newcommand{\un}[1][m]{\mathcal U_{#1}}
\newcommand{\ih}[1][m]{\mathcal I \mathcal H_{#1}}
\newcommand{\hz}[1][m]{\mathcal H_{0, #1}}
\newcommand{\ihz}[1][m]{\mathcal I \mathcal H_{0, #1}}
\begin{document}

\title{Linear preservers of rank $k$ projections}
\author{
	Lucijan Plevnik}
\subjclass[2000]{47B49, 15A86, 81P16}
\keywords{finite rank projection; Wigner's theorem; linear preservers; Grassmann space}
\address{University of Ljubljana, Faculty for Mathematics and Physics, Jadranska ulica 19, Ljubljana, Slovenia}
\email{
	lucijan.plevnik@fmf.uni-lj.si}

\begin{abstract}
Let $\HH$ be a complex Hilbert space and $\fs$ the real vector space of all self-adjoint finite rank bounded operators on $\HH$. We generalize the famous Wigner's theorem by characterizing linear maps on $\fs$ which preserve the set of all rank $k$ projections. In order to do this, we first characterize linear maps on the real vector space $\hz[2k]$ of trace zero $(2k) \times (2k)$ hermitian matrices which preserve the subset of unitary matrices in $\hz[2k]$.

We also study linear maps from $\fs$ to $\fs[\KK]$ sending projections of rank $k$ to finite rank projections. We prove some properties of such maps, e.g. that they send rank $k$ projections to projections of a fixed rank. We give the complete description of such maps in the case $\dim \HH = 2$. We give several examples which show that in the general case the problem to describe all such maps seems to be complicated.
\end{abstract}

\maketitle

\section{Introduction}

Let $\HH$ be a complex Hilbert space. We say that a bounded linear operator on $\HH$ is a projection if it is idempotent and self-adjoint. In this paper, we will study linear maps which send projections of a fixed finite rank to finite rank projections. It is well-known that if $k \in \NN$, $k < \dim \HH$, then the real linear span of the set of all rank $k$ projections $\PH{k}$ equals the real linear space of finite rank bounded self-adjoint operators $\fs$, see e.g. \cite[Lemma 2.1.5.]{Molnar}. Thus, it makes sense to assume that the considered linear maps map from $\fs$ to $\fs[\KK]$.

Our motivation to study such maps has roots in the famous Wigner's theorem which plays a central role in mathematical foundations of quantum mechanics. It was published in 1931 and one can find the English translation in \cite{wig}. The theorem can be formulated as follows.
\begin{theorem}[Wigner]
	Suppose that $\dim \HH > 1$ and let $\phi \colon \PH{1} \to \PH{1}$ be a bijective map such that
	\begin{eqnarray}\label{trpq}
		\tr \phi(P) \phi(Q) = \tr PQ, & P, Q \in \PH{1}.
	\end{eqnarray}
	Then there exists a unitary or an anti-unitary operator $U \colon \HH \to \HH$ such that
	\begin{eqnarray*}
		\phi(P) = UPU^\ast, & P \in \PH{1}.
	\end{eqnarray*}
\end{theorem}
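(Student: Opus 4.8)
The plan is to pass from projections to the underlying rays and reconstruct the (anti)unitary $U$ coordinate by coordinate. For a unit vector $x \in \HH$ write $P_x$ for the rank one projection onto $\CC x$; then every element of $\PH{1}$ is of this form and $\tr P_x P_y = |\langle x, y\rangle|^2$, so the hypothesis \eqref{trpq} says precisely that $\phi$ preserves all transition probabilities. Since $\tr P_x P_y = 0$ exactly when $x \perp y$, and since the bijectivity of $\phi$ together with the symmetry of \eqref{trpq} forces $\phi^{-1}$ to satisfy the same identity, both $\phi$ and $\phi^{-1}$ preserve orthogonality of rays. I would first fix an orthonormal basis $\{e_i\}$ of $\HH$: the projections $P_{e_i}$ are pairwise orthogonal, hence so are their images $P_{f_i}$, and a short argument using surjectivity of $\phi$ (any unit $z$ orthogonal to all $f_i$ would pull back to a nonzero vector orthogonal to every $e_i$) shows that $\{f_i\}$ is again an orthonormal basis.

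Next I would show that $\phi$ preserves the moduli of the coordinates. Writing $\phi(P_x) = P_y$ and testing \eqref{trpq} against each $P_{e_i}$ gives $|\langle y, f_i\rangle| = |\langle x, e_i\rangle|$ for all $i$, so only the relative phases remain to be pinned down, and the whole difficulty of Wigner's theorem is concentrated here. The key device is to test $x$ against the superposition states $u_j = \tfrac{1}{\sqrt 2}(e_1 + e_j)$ and $v_j = \tfrac{1}{\sqrt 2}(e_1 + i e_j)$: expanding $|\langle x, u_j\rangle|^2$ and $|\langle x, v_j\rangle|^2$ one reads off $\re(c_1 \bar c_j)$ and $\operatorname{Im}(c_1 \bar c_j)$, where $c_i = \langle x, e_i\rangle$, and hence the relative phase of $c_j$ against $c_1$. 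After absorbing the free phases of the $f_i$ so that $\phi(P_{u_j}) = P_{(f_1 + f_j)/\sqrt 2}$, comparing these quantities for $x$ and for $y$ shows that each individual relative phase is either preserved or conjugated.

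The remaining, and genuinely hardest, step is to prove that a single global choice must be made: either every relative phase is preserved, giving a unitary $U$, or every one is conjugated, giving an anti-unitary $U$, with no mixing. Here I would use vectors with at least three nonzero coordinates, such as $\tfrac{1}{\sqrt 3}(e_1 + e_j + e_k)$, whose transition probabilities against the $u$'s and $v$'s couple the phase behaviour of the $j$th and $k$th coordinates and rule out the mixed possibility as soon as $\dim \HH \ge 3$. Once the global alternative is fixed, I would define $U$ to be the linear (resp. conjugate linear) isometry determined by $U e_i = f_i$ and verify directly that $\phi(P_x) = U P_x U^\ast$ for every unit $x$, so $U$ is unitary or anti-unitary as claimed. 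Finally the case $\dim \HH = 2$ must be handled separately, since the three-coordinate coupling is unavailable; there one checks on the two-parameter family of rays by direct computation that any transition-probability preserving bijection of $\PH{1}$ is already of the stated form.
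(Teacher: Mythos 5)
First, note that the paper does not prove this statement at all: Wigner's theorem is quoted in the introduction as classical background, with a reference to \cite{wig} and to the later literature, so there is no in-paper argument to measure your proposal against. Judged on its own, your outline is the standard Bargmann-style coordinate reconstruction, and most of its steps are sound: the image rays $f_i$ of an orthonormal basis form an orthonormal basis (your surjectivity argument is correct), the moduli $|\langle x, e_i\rangle|$ are preserved, and testing against $u_j = \tfrac{1}{\sqrt 2}(e_1+e_j)$ and $v_j = \tfrac{1}{\sqrt 2}(e_1+ie_j)$ after normalizing the phases of the $f_j$ does recover $\re\left(c_1\overline{c_j}\right)$ exactly and $\operatorname{Im}\left(c_1\overline{c_j}\right)$ up to sign. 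The separate treatment of $\dim \HH = 2$ is also legitimate (there $\PH{1}$ is the Bloch sphere, $\tr PQ$ is an affine function of the spherical inner product, and the isometries of $S^2$ split into rotations, i.e.\ unitaries, and reflections, i.e.\ anti-unitaries).

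The one genuine gap is in your globalization step. The three-coordinate vectors $\tfrac{1}{\sqrt 3}(e_1+e_j+e_k)$, as you use them, couple the sign choices attached to the $j$th and $k$th coordinates of a \emph{single} vector $x$; they show that for each fixed $x$ the image is, up to a global phase, either $\sum c_i f_i$ or $\sum \overline{c_i} f_i$, with no mixing among coordinates. But the theorem requires that the \emph{same} alternative holds for all $x$ simultaneously, and nothing in your sketch compares the alternative chosen for $x$ with the one chosen for a different vector $x'$. A priori $\phi$ could act ``linearly'' on some rays and ``antilinearly'' on others; this is consistent with everything you have established, because for vectors with real coordinates the two alternatives coincide. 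The standard repair is an extra comparison step: normalize $\phi\left(P_{(e_1+ie_2)/\sqrt 2}\right) = P_{(f_1+if_2)/\sqrt 2}$ once and for all (composing with coordinatewise conjugation if necessary), and then show that any $x$ whose image were conjugated would violate \eqref{trpq} when tested against this fixed reference ray or against a suitable auxiliary ray with non-real relative phases; alternatively one can invoke a connectedness argument on the set of rays with non-real phase data. Without some such step the definition of $U$ at the end is not justified. Everything else in the outline would go through once this is added.
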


This theorem has been generalized in several ways. First, note that the assumption of injectivity is redundant since it clearly follows from \eqref{trpq}. It turns out that neither surjectivity is needed in order to get an analogous result, only the isometry $U$ needs not be surjective, see e.g. \cite{geh-wig}.

Another way to generalize Wigner's theorem is to describe preservers of a quantity on $\PH{k}$ which is equivalent to the transition probability $\tr PQ$ in the case $k=1$. Such an example is the description of surjective isometries on $\PH{k}$ with respect to the gap metric, i.e. the metric induced by the operator norm on $\PH{k}$, due to Geh\'er and Šemrl \cite{gehse}. In Section \ref{hola}, we will see that this metric is relevant in our study. Moreover, in the case $k=1$ it equals $\sin \varphi$, where $\varphi$ is the angle between the one-dimensional subspaces $\ran P$ and $\ran Q$, while $\tr PQ$ equals $\cos^2 \varphi$. Here, $\ran A$ denotes the range of an operator $A$.

Let us introduce the result which is even more important for our study. The concept of an angle between one-dimensional subspaces can be generalized by the concept of principal angles between finite-dimensional subspaces. This was introduced by Jordan \cite{jor} and is related to the Halmos' two projections theorem. We will describe this topic more in Section \ref{hola}. At this point, we mention Moln\'ar's characterization of preservers of principal angles.

\begin{theorem}[L. Moln\'ar, \cite{mol1}, \cite{mol2}]
	Suppose that $k \in \NN$, $k < \dim \HH$, and let $\phi \colon \PH{k} \to \PH{k}$ be a map such that for all $P, Q \in \PH{k}$, principal angles between $\phi(P)$ and $\phi(Q)$ are the same as those between $P$ and $Q$. Then there exists a linear or conjugate-linear isometry $U \colon \HH \to \HH$ such that either
	\begin{eqnarray*}
		\phi(P) = UPU^\ast, & P \in \PH{k},
	\end{eqnarray*}
	or $k \ge 2$, $\dim \HH = 2k$, and
	\begin{eqnarray*}
		\phi(P) = U(I-P)U^\ast, & P \in \PH{k}.
	\end{eqnarray*}
\end{theorem}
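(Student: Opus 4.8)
The plan is to pass from the continuous datum of principal angles to two discrete incidence relations, settle the resulting combinatorial geometry by a fundamental theorem of the geometry of Grassmann spaces, and then restore the metric structure by reducing to Wigner's theorem on suitable one-parameter subfamilies. First I would record what principal-angle preservation gives combinatorially. The squared cosines of the principal angles between $\ran P$ and $\ran Q$ are exactly the eigenvalues of $PQP$ on $\ran P$, and the number of vanishing principal angles equals $\dim(\ran P \cap \ran Q)$. Hence $\phi$ is injective (all angles vanish iff $P=Q$), it preserves orthogonality of ranges ($PQ=0$, i.e. all angles equal $\pi/2$), and, most importantly, it preserves $\dim(\ran P\cap\ran Q)$. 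In particular $\phi$ preserves adjacency, the relation $\dim(\ran P\cap\ran Q)=k-1$, in both directions.

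Next I would exploit the pencil structure. Fix a subspace $W$ with $\dim W=k-1$ and consider the star $\mathcal F_W=\{P\in\PH{k}:\ran P\supseteq W\}$. For distinct $P,P'\in\mathcal F_W$ exactly $k-1$ principal angles vanish and the single remaining angle is the ordinary angle between the lines $\ran P\cap W^\perp$ and $\ran P'\cap W^\perp$ in $W^\perp$; thus $\mathcal F_W$, carrying its principal-angle data, is canonically identified with the rank one Grassmannian $\PH[W^\perp]{1}$ with its usual angle. Since $\phi$ preserves adjacency it carries maximal cliques to maximal cliques, and by Chow's description these are precisely the stars $\mathcal F_W$ and the tops (all rank $k$ projections with range inside a fixed $(k+1)$-dimensional subspace). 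Generically a star is sent to a star; on it $\phi$ descends to a map of rank one Grassmannians preserving the single angle, i.e. satisfying the hypothesis of Wigner's theorem, which yields a linear or conjugate-linear isometry of $W^\perp$ implementing $\phi$ on $\mathcal F_W$.

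I would then globalize these local isometries. Two stars $\mathcal F_W$ and $\mathcal F_{W'}$ with $\dim(W\cap W')=k-2$ share a whole pencil of projections, and on the overlap the two Wigner isometries must agree; running this compatibility over a connected chain of stars forces a single linear-or-conjugate-linear isometry $U$ on all of $\HH$ with $\phi(P)=UPU^\ast$. The linear versus conjugate-linear alternative cannot switch between overlapping pencils, so it is globally uniform. The exceptional possibility arises exactly when $\dim\HH=2k$: there $P\mapsto I-P$ is an angle-preserving involution of $\PH{k}$ that interchanges stars and tops, so Chow's theorem permits $\phi$ to reverse the clique type, and the same globalization then produces $\phi(P)=U(I-P)U^\ast$.

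The main obstacle is this globalization-and-rigidity step rather than any single local computation: one must verify that the per-pencil Wigner isometries are mutually compatible and assemble to one operator $U$, exclude mixing of the linear and conjugate-linear cases, and prove that reversing stars and tops is geometrically possible only in dimension $2k$, which is precisely what isolates the two alternatives in the conclusion. A secondary difficulty is that $\phi$ is not assumed surjective, so throughout the adjacency and maximal-clique arguments must be used in a form valid without bijectivity (yielding a possibly non-surjective $U$) and, when $\dim\HH=\infty$, in the appropriate infinite-dimensional version of the geometry of Grassmann spaces.
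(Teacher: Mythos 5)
First, note that the paper does not prove this statement at all: it is Moln\'ar's theorem, quoted with citations \cite{mol1}, \cite{mol2} as background. The only indication of Moln\'ar's actual strategy is the remark immediately following the statement: he shows that preservation of $\tr PQ$ (hence of principal angles) forces $\phi$ to extend uniquely to an injective \emph{linear} map on $\fs$, and then classifies such linear maps. Your route --- discretize to adjacency, invoke Chow's description of maximal cliques (stars and tops), run Wigner's theorem on each star, and glue --- is therefore genuinely different from the cited proof, and closer in spirit to the Grassmann-geometric arguments the paper itself uses later (e.g.\ Case 3 of the proof of Theorem \ref{phkk}, which passes to the rank-one level and applies Faure's theorem).

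However, your sketch has a concrete gap in the globalization step. You assert that two stars $\mathcal F_W$ and $\mathcal F_{W'}$ with $\dim(W\cap W')=k-2$ ``share a whole pencil of projections.'' They do not: $P\in\mathcal F_W\cap\mathcal F_{W'}$ forces $\ran P\supseteq W+W'$, and $\dim(W+W')=2(k-1)-(k-2)=k$, so the two stars meet in exactly \emph{one} projection (and in none if $\dim(W\cap W')<k-2$). Moreover the two Wigner isometries you obtain live on the different spaces $W^\perp$ and $W'^\perp$, so ``agreeing on the overlap'' is not even well posed as stated; the compatibility and assembly into a single isometry $U$ of $\HH$ is precisely the hard part and is left unproved. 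Two further points need real arguments rather than assertions: (i) without surjectivity, an injective adjacency preserver sends a maximal clique \emph{into} a maximal clique, not onto one, and distinguishing whether the image lands in a star or a top requires extra input (your principal-angle data can supply it --- e.g.\ a star contains $\dim\HH-k+1$ pairwise ``orthogonal'' elements while a top contains only $k+1$, which is also the cleanest way to see that the star-to-top reversal forces $\dim\HH=2k$ --- but this must be said); and (ii) Chow's theorem and the clique classification must be invoked in a version valid for infinite-dimensional $\HH$. As it stands the proposal is a plausible outline of an alternative proof, but the gluing argument it rests on is incorrect as written.
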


When proving this theorem, Moln\'ar proved that a map $\phi \colon \PH{k} \to \PH{k}$ preserving principal angles can be uniquely extended to an injective linear map $\phi \colon \fs \to \fs$. In fact, his lemma shows that in order to get the linear extension, it is enough for $\phi$ to preserve the quantity $\tr PQ$ which equals the sum of squares of cosines of the principal angles for $P, Q \in \PH{k}$. Later, Geh\'er used this lemma to show that preservation of $\tr PQ$ alone is enough to get the same result as when preserving all principal angles.

\begin{theorem}[G. P. Geh\'er, \cite{geh-grass}]
	Suppose that $k \in \NN$, $k < \dim \HH$, and let $\phi \colon \PH{k} \to \PH{k}$ be a map such that
	\begin{eqnarray*}
		\tr \phi(P) \phi(Q) = \tr PQ, & P, Q \in \PH{k}.
	\end{eqnarray*}
	Then there exists a linear or conjugate-linear isometry $U \colon \HH \to \HH$ such that either
	\begin{eqnarray*}
		\phi(P) = UPU^\ast, & P \in \PH{k},
	\end{eqnarray*}
	or $k \ge 2$, $\dim \HH = 2k$, and
	\begin{eqnarray*}
		\phi(P) = U(I-P)U^\ast, & P \in \PH{k}.
	\end{eqnarray*}
\end{theorem}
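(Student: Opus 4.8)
The plan is to reduce the statement to Moln\'ar's theorem by showing that preservation of the single quantity $\tr PQ$ already forces preservation of all principal angles; the bridge between the two is the linear extension furnished by Moln\'ar's lemma. First I would invoke that lemma to extend $\phi$ to a real-linear map. Since $(A, B) \mapsto \tr(AB)$ is the Hilbert--Schmidt inner product on $\fs$ and $\PH{k}$ spans $\fs$, the hypothesis $\tr \phi(P)\phi(Q) = \tr PQ$ says exactly that $\phi$ preserves this inner product on the spanning set $\PH{k}$. A standard argument then produces a well-defined real-linear $\Phi \colon \fs \to \fs$ extending $\phi$: if $\sum_i c_i P_i = 0$, then testing against each $P_j$ and using the inner-product identity shows that $w = \sum_i c_i \phi(P_i)$ is orthogonal to every $\phi(P_j)$ and lies in their span, whence $w \perp w$ and $w = 0$. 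The resulting $\Phi$ is an injective Hilbert--Schmidt isometry with $\Phi(\PH{k}) \subseteq \PH{k}$ and $\tr(\Phi(A)\Phi(B)) = \tr(AB)$ for all $A, B \in \fs$.

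The heart of the proof is to promote this to preservation of all principal angles, i.e.\ to show that for $P, Q \in \PH{k}$ the self-adjoint operators $PQP$ and $\Phi(P)\Phi(Q)\Phi(P)$ carry the same eigenvalues $\cos^2\theta_1, \ldots, \cos^2\theta_k$. Here lies the \emph{main obstacle}: the quantity $\tr PQ = \sum_i \cos^2\theta_i$ records only the first spectral moment, whereas the angles are determined by all moments $\tr((PQ)^m) = \sum_i \cos^{2m}\theta_i$, and a linear isometry does not preserve such higher, quartic-and-beyond invariants on its own. Indeed $\Phi$ preserves the bilinear form $\tr(AB)$ but not the spectrum of a product, so the argument cannot be purely linear; it must use in an essential way the genuinely nonlinear constraint that $\Phi$ carries honest rank $k$ projections to honest rank $k$ projections.

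To exploit this, I would engineer, for a fixed pair $P, Q$, auxiliary families of rank $k$ projections whose pairwise traces isolate the individual numbers $\cos^2\theta_i$ rather than merely their sum. Writing $P = \sum_i |u_i\rangle\langle u_i|$ and $Q = \sum_i |v_i\rangle\langle v_i|$ in principal bases, so that $\tr\bigl(|u_i\rangle\langle u_i| \, |v_j\rangle\langle v_j|\bigr) = \delta_{ij}\cos^2\theta_i$, one assembles rank $k$ projections supported on suitable combinations of the principal directions; since $\Phi$ preserves all the relevant traces and sends each such projection to a true rank $k$ projection, the combinatorics of orthogonality and range intersection is transported to the image side and the individual $\cos^2\theta_i$ are recovered. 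Equivalently, one first shows that $\Phi$ preserves orthogonality ($\tr PQ = 0$) and, more generally, the dimension of $\ran P \cap \ran Q$ together with the adjacency relation of the Grassmann space, after which a Chow- or Wigner-type rigidity pins down the full angle data.

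Once preservation of all principal angles is established, Moln\'ar's theorem applies verbatim and delivers the linear or conjugate-linear isometry $U \colon \HH \to \HH$, together with the exceptional alternative $\phi(P) = U(I-P)U^\ast$ in the balanced case $\dim \HH = 2k$, $k \ge 2$. I expect the extension step and the final invocation of Moln\'ar's theorem to be routine, and essentially all of the difficulty to be concentrated in the middle step of upgrading the scalar $\tr PQ$ to the complete principal-angle spectrum.
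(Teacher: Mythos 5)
First, note that the paper does not prove this statement at all: it is quoted from Geh\'er's article \cite{geh-grass} as background, so there is no internal proof to compare against. Judged on its own terms, your proposal opens correctly --- the extension of $\phi$ to a real-linear Hilbert--Schmidt isometry $\Phi \colon \fs \to \fs$ with $\Phi\left(\PH{k}\right) \subseteq \PH{k}$ via Moln\'ar's lemma is exactly the standard first step --- and the final appeal to Moln\'ar's theorem would indeed be routine once preservation of all principal angles is established. The problem is that essentially the entire content of the theorem sits in your middle step, and there you have a plan rather than a proof.

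Concretely, two things go wrong. First, your ``auxiliary families of rank $k$ projections supported on suitable combinations of the principal directions'' are not obviously available: the principal vectors $u_i$ of $P$ are in general not orthogonal to the principal vectors $v_j$ of $Q$, so sums of the rank-one projections $|u_i\rangle\langle u_i|$ and $|v_j\rangle\langle v_j|$ are generally not projections, and for $k \ge 2$ the individual rank-one pieces are not in the domain of the hypothesis at all. Second, even granting such a family, the traces $\tr \Phi(R)\Phi(P)$ and $\tr \Phi(R)\Phi(Q)$ for auxiliary projections $R$ constrain only linear functionals of the image operators; extracting the full spectrum of $\Phi(P)\Phi(Q)\Phi(P)$ requires exploiting the nonlinear constraint $\Phi\left(\PH{k}\right) \subseteq \PH{k}$ in a specific way that you gesture at (``Chow- or Wigner-type rigidity'') but never pin down. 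The device Geh\'er actually uses --- and which the present paper generalizes in Section \ref{hola} --- is the set $\hol[1]{X}{Y} = \left\{ Z \mid P_X + P_Y - P_Z \in \PH{f} \right\}$: because it is defined by a linear identity together with the projection property, a linear projection-preserver maps $\hol[1]{X}{Y}$ into $\hol[1]{f(X)}{f(Y)}$, and its geometric and topological structure (e.g.\ when it is a one-dimensional manifold, \cite[Lemma 2]{geh-grass}) encodes the relative position of $X$ and $Y$ finely enough to recover the angle data. Without an ingredient of this kind your argument does not close.
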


This motivated Pankov to study linear maps $\fs \to \fs[\KK]$ which send projections of a fixed finite rank to projections of a fixed finite rank. The standard examples of such maps are the following.

If $A$ is a complex matrix or a bounded linear or conjugate-linear bounded operator between complex Hilbert spaces, then $A^\ast$ denotes its conjugate transpose or adjoint, respectively.

\begin{exmp}\label{stu}
	Let $U \colon \HH \to \KK$ be a linear or conjugate-linear isometry. Then the map
	$$
	A \mapsto UAU^\ast
	$$
	is an injective linear map $\fs \to \fs[\KK]$ which maps $\PH{k}$ into $\PH[\KK]{k}$.
\end{exmp}

For each $m \in \NN$ denote by $\HH_m$ the real vector space of $m \times m$ complex hermitian matrices and by $I_m \in \HH_m$ the identity matrix. For every $k \in \{0, 1, \ldots, m\}$ denote by $\mathcal P_{m,k} \subset \HH_m$ the set of all rank $k$ projections.

\begin{exmp}\label{n-k}
	Let $k, m \in \NN$, $k < m$. Define $L_k \colon \HH_m \to \HH_m$ by
	\begin{eqnarray*}
		L_k (A) = \tfrac{\tr A}{k} I_m - A, & A \in \HH_m.
	\end{eqnarray*}
	Then $L_k$ is a bijective linear map such that $L_k^{-1} = L_{m-k}$ and $L_k \left(\mathcal P_{m,k}\right) = \mathcal P_{m,m-k}$.
\end{exmp}

The next example shows that in the case $m = 2$, the map from Example \ref{n-k} is of the standard form from Example \ref{stu}.

For each $m \in \NN$ denote by $\un$ the set of all $m \times m$ complex unitary matrices. If $A$ is a complex matrix, then $\overline{A}$ denotes its componentwise complex conjugate.

\begin{exmp}\label{lk2k}
	For $U = \begin{bmatrix}
		0 & 1 \\
		-1 & 0
	\end{bmatrix} \in \un[2]$ we have
	\begin{eqnarray*}
		(\tr A) I_2 - A = U \overline{A} U^\ast, & A \in \HH_2.
	\end{eqnarray*}
\end{exmp}

The fruition of Pankov's study is the following result.

\begin{theorem}[M. Pankov, \cite{pankov_km}]\label{ppt}
	Let $\phi \colon \fs \to \fs$ be a linear map such that $\phi \left(\PH{k}\right) \subset \PH{k}$ and $\phi|_{\PH{k}}$ is injective. Then there exists a linear or conjugate-linear isometry $U \colon \HH \to \HH$ such that either
	\begin{eqnarray*}
		\phi(A) = UAU^\ast, & A \in \fs,
	\end{eqnarray*}
	or $k \ge 2$, $\dim \HH = 2k$, and
	\begin{eqnarray*}
		\phi(A) = U\left(\tfrac{\tr A}{k}\mathcal I_\HH-A\right)U^\ast, & A \in \fs.
	\end{eqnarray*}
\end{theorem}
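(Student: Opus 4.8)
The plan is to reduce the statement to Geh\'er's theorem, so that the whole problem comes down to verifying that $\phi|_{\PH{k}}$ preserves the quantity $\tr PQ$. The first, and easy, observation is that $\phi$ preserves the trace: every $A \in \fs$ is a finite real linear combination $A = \sum_i c_i P_i$ with $P_i \in \PH{k}$ (the spanning property recalled in the introduction), and since each $\phi(P_i) \in \PH{k}$ we get $\tr \phi(A) = \sum_i c_i k = \sum_i c_i \tr P_i = \tr A$. In particular, for $P \in \PH{k}$ the image $\phi(P)$ is a rank $k$ projection, so $\tr \phi(P)^2 = \tr \phi(P) = k = \tr P^2$; the nontrivial point is the off-diagonal identity $\tr \phi(P)\phi(Q) = \tr PQ$ for $P \neq Q$.

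The heart of the proof is therefore to show that $\phi$ is an isometry for the inner product $\langle A, B\rangle = \tr AB$ on $\fs$, equivalently that the homogeneous quadratic form $A \mapsto \tr\phi(A)^2 - \tr A^2$, which vanishes on all of $\PH{k}$, vanishes identically. Since $\PH{k}$ spans $\fs$ but is only a sphere-like variety, vanishing on $\PH{k}$ does not formally force the form to vanish, and this is exactly the main obstacle. I would attack it geometrically, working with adjacency in the Grassmann graph, where $P, Q \in \PH{k}$ are adjacent when $\dim(\ran P \cap \ran Q) = k-1$; this is available for every admissible dimension since $k < \dim \HH$. When moreover $\dim \HH \ge 2k$ one has a useful seed: if $PQ = 0$ then $P + Q$ is a rank $2k$ projection, and by writing projections of ranks other than $k$ as sums of mutually orthogonal rank $k$ projections and exploiting idempotency together with the established trace preservation, I would show that $\phi$ sends such sums to rank $2k$ projections, forcing $\phi(P)\phi(Q) = 0$. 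Upgrading orthogonality (or adjacency) preservation to preservation of \emph{all} principal angles, hence of $\tr PQ$, is the delicate step; here I would either invoke the fundamental theorem of the geometry of Grassmann spaces to recover a semilinear map and then use self-adjointness to pin down its type, or, more in the spirit of the cited results, use the rigidity of the gap metric (Geh\'er--Šemrl, \cite{gehse}) to transfer the preserved configuration to the full metric and thus to $\tr PQ$.

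Once $\tr \phi(P)\phi(Q) = \tr PQ$ is established on $\PH{k}$, Geh\'er's theorem \cite{geh-grass} applies to $\phi|_{\PH{k}} \colon \PH{k} \to \PH{k}$ and yields a linear or conjugate-linear isometry $U \colon \HH \to \HH$ with either $\phi(P) = UPU^\ast$ for all $P \in \PH{k}$, or $k \ge 2$, $\dim\HH = 2k$, and $\phi(P) = U(\mathcal I_\HH - P)U^\ast$ for all $P \in \PH{k}$. It remains to extend these identities from $\PH{k}$ to all of $\fs$. In the first case the map $A \mapsto UAU^\ast$ is real-linear (Example \ref{stu}) and agrees with $\phi$ on the spanning set $\PH{k}$, so the two coincide on $\fs$. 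In the second case $\dim \HH = 2k$ is finite, so $\mathcal I_\HH$ is finite rank; the real-linear map $A \mapsto U(\tfrac{\tr A}{k}\mathcal I_\HH - A)U^\ast$ agrees with $\phi$ on $\PH{k}$, since there $\tfrac{\tr P}{k}\mathcal I_\HH = \mathcal I_\HH$ recovers $U(\mathcal I_\HH - P)U^\ast$ (cf. Example \ref{n-k}), and again linearity together with the spanning property forces equality on all of $\fs$. This gives precisely the two asserted forms.
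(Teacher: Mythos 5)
First, a remark on the comparison you asked for: the paper does not prove Theorem \ref{ppt} at all --- it is quoted from Pankov \cite{pankov_km} as background. The nearest thing to a proof in the paper is Theorem \ref{phkk}, which strictly generalizes it (dropping the injectivity hypothesis and allowing the target rank to drop) by an entirely different, self-contained route: the analysis of the sets $\hol[1]{X}{Y}$ in Proposition \ref{prophol}, the structural reduction of Lemma \ref{phraz} to real linear maps sending $\ihz[2k]$ into unitaries, and the classification in Theorem \ref{iho}. So there is no paper proof of the same shape as yours to measure against; your proposal has to stand on its own.

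On its own it has a genuine gap. The outer layers are fine: trace preservation via the spanning property is correct, and so is the final extension from $\PH{k}$ to $\fs$ by linearity once Geh\'er's theorem has been applied. But the entire difficulty of the statement is concentrated in the step you only sketch, namely $\tr \phi(P)\phi(Q)=\tr PQ$, and the sketch does not close it. Your proposed seed --- that $PQ=0$ forces $\phi(P)\phi(Q)=0$ because $P+Q$ is a rank $2k$ projection --- does not follow from the hypotheses: $\phi$ is only assumed to send rank $k$ projections to rank $k$ projections, so nothing forces $\phi(P+Q)=\phi(P)+\phi(Q)$ to be a projection; the assertion that $\phi(P)+\phi(Q)$ is idempotent is exactly the orthogonality statement you are trying to establish, so as written the argument is circular. (What one can legitimately extract is that for every $R$ with $P+Q-R\in\PH{k}$ the image satisfies $\phi(P)+\phi(Q)-\phi(R)\in\PH{k}$, i.e. $\ran\phi(R)\in\hol[1]{\ran\phi(P)}{\ran\phi(Q)}$; Proposition \ref{prophol} shows this alone does \emph{not} force $\phi(P)\perp\phi(Q)$, which is precisely why the paper needs the heavier machinery of Lemma \ref{phraz} and Theorem \ref{iho}.) The subsequent ``upgrade'' to preservation of all principal angles is likewise not available as cited: Chow's fundamental theorem of Grassmann geometry and the Geh\'er--\v{S}emrl gap-isometry theorem \cite{gehse} both require bijectivity or surjectivity of the induced map on the Grassmannian, which your hypotheses do not provide. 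So the proposal correctly identifies Geh\'er's theorem \cite{geh-grass} as a plausible endpoint, but leaves unproved the one step where all the content of Pankov's theorem lives.
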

 Here, $\mathcal I_{\HH}$ denotes the identity operator on $\HH$. He also considered linear maps $\fs \to \fs[\KK]$ which are injective on the set $\PH k$ and send projections of rank $k$ to projections of rank $m$. He showed that if $\HH$ is infinite-dimensional, then under the additional assumption
\begin{eqnarray*}
	\dim \left(\ran \phi(P) \cap \ran \phi(Q)\right) \ge m-k, & P, Q \in \PH{k},
\end{eqnarray*}
such maps are direct sums of the standard maps in Examples \ref{stu} and \ref{n-k}, and a non-injective map from the following example.

Denote by $\PH{f} \subset \fs$ the set of all finite rank projections. For each $A \in \fs$ denote its trace by $\tr A$.

\begin{exmp}\label{con}
	Let $P_0 \in \PH[\KK]{f}$. Then the map
	$$
	A \mapsto \tfrac{\tr A}{k} P_0
	$$
	is a linear map $\fs \to \fs[\KK]$ which maps $\PH{k}$ constantly into $\PH[\KK]{f}$.
\end{exmp}

However, the general problem to characterize all linear maps $\fs \to \fs[\KK]$ which send projections of a fixed rank to projections of a fixed rank without additional assumptions is much more difficult. Indeed, let us present some examples of such maps which are not of the above described standard forms in the case when $\HH$ is finite-dimensional. Some of these examples also fail to have the property of the so far presented examples that the map is either injective or constant on the set $\PH{k}$.

We will denote the set of all $m \times n$ complex matrices by $\mm[m,n]$ and write shortly $\mm = \mm[m,m]$.

\begin{exmp}\label{rlunun}
	For all $k \in \NN$ define $\rho_k \colon \CC^k \to \mm[2^{k-1}]$ in a recursive way:
	\begin{eqnarray*}
		\rho_1(z) = z, & z \in \CC,
	\end{eqnarray*}
	and for $k \ge 2$:
	\begin{eqnarray*}
		\rho_k \left(\left[\begin{array}{c}
			z \\ w
		\end{array}\right]\right)
		= \begin{bmatrix}
			z I_{2^{k-2}} & \rho_{k-1} (w) \\
			- \rho_{k-1} (w)^\ast& \overline{z} I_{2^{k-2}}
		\end{bmatrix}, & z \in \CC, \, w \in \CC^{k-1}.
	\end{eqnarray*}
	Then $\rho_k$ is a real linear map and $\rho_k \left(\{v \in \CC^k \mid \|v\|=1\}\right) \subset \un[2^{k-1}]$.
\end{exmp}

\begin{exmp}
	Let $k, n \in \NN$, $k < n$. Define
	\begin{eqnarray*}
		\phi\left(\begin{bmatrix}
			A & v \\
			v^\ast & b
		\end{bmatrix}\right) = 
		\begin{bmatrix}
			\left(\tfrac{\tr A +b}{k} - b\right) I_{2^{n-2}} & \rho_{n-1}(v) \\
			\rho_{n-1}(v)^\ast & b I_{2^{n-2}}
		\end{bmatrix}, & A \in \HH_{n-1}, \, v \in \CC^{n-1}, \, b \in \RR,
	\end{eqnarray*}
	where $\rho_{n-1}$ is as in Example \ref{rlunun}. Then $\phi \colon \HH_n \to \HH_{2^{n-1}}$ is a linear map such that $\phi \left(\mathcal P_{n,k}\right) \subset \mathcal P_{2^{n-1},2^{n-2}}$ and $\phi|_{\mathcal P_{n,k}}$ is not constant. If $n \ge 3$, then $\phi|_{\mathcal P_{n,k}}$ is not injective.
\end{exmp}

\begin{exmp}\label{ev}
	Let $m \in \NN$, $v_0 \in \CC^m$, $\|v_0\| = 1$, and $\mathcal M \subset \mm$ a real vector subspace. Let $\rho_m \colon \CC^m \to \mm[2^{m-1}]$ be as in Example \ref{rlunun}. Then the map $\phi \colon \mathcal M \to \mm[2^{m-1}]$, given by
	\begin{eqnarray*}
		\phi(A) = \rho_m \left(Av_0\right), & A \in \mathcal M,
	\end{eqnarray*}
	is a real linear map such that $\phi(\mathcal M \cap \un) \subset \un[2^{m-1}]$.
\end{exmp}

For all $m \in \NN$ denote
$$
\hz = \{A \in \HH_m \mid \tr A = 0\}
$$
and $\ihz = \hz \cap \un$.

\begin{exmp}\label{e2km}
	Let $k, m \in \NN$ and $t \in [0,1]$. If there exists a real linear map $\tau \colon \hz[2k] \to \mm$ such that
	\begin{equation}\label{taiu}
		\tau \left(\ihz[2k]\right) \subset \un,
	\end{equation}
	then the map $\phi \colon \HH_{2k} \to \HH_{2m}$ defined by
	\begin{eqnarray*}
		\phi(A) = \begin{bmatrix}
			t \tfrac{\tr A}{k} I_m  & \sqrt{t(1-t)} \tau \left(2A - \tfrac{\tr A}{k} I_{2k}\right)  \\
			\sqrt{t(1-t)} \tau \left(2A - \tfrac{\tr A}{k} I_{2k}\right)^\ast  & (1-t) \tfrac{\tr A}{k} I_m
		\end{bmatrix}, & A \in \HH_{2k},
	\end{eqnarray*}
	is linear and we have $\phi \left(\mathcal P_{2k,k}\right) \subset \mathcal P_{2m,m}$. Such defined $\phi$ is injective if and only if $\tau$ is injective. By Example \ref{ev}, $\tau$ satisfying \eqref{taiu} exists when $m = 2^{2k-1}$ and it is not injective if $k \ge 2$. We will see later that:
	\begin{itemize}
		\item If $k = 1$, then $\tau$ satisfying \eqref{taiu} exists if and only if $m$ is even. In this case, such a $\tau$ is injective and of a standard form (Proposition \ref{h0u}).
		\item For any $\varphi \in \RR$, the linear map given by
		\begin{eqnarray*}
			\tau(A) = \cos \varphi \, A \otimes I_{2k} + i \sin \varphi \, I_{2k} \otimes A, & A \in \hz[2k],
		\end{eqnarray*}
		is an example of injective $\tau \colon \hz[2k] \to \un$ satisfying \eqref{taiu} for $m = 4k^2$. If $k \ge 2$ and $\varphi$ is not an integer multiple of $\tfrac{\pi}{2}$, then $\tau$ does not preserve singular values, so it is not of the standard form from Proposition \ref{h0u}.
		\item $\tau$ satisfying \eqref{taiu} may exist only when $m \ge 2k$ (Proposition \ref{hura}).
	\end{itemize}
\end{exmp}

Next, we will show in Proposition \ref{fkm} that if $\phi \colon \fs \to \fs[\KK]$ is a linear map, then it is enough to assume $\phi \left(\PH{k}\right) \subset \PH[\KK]{f}$ since this assumption implies that $\phi \left(\PH{k}\right) \subset \PH[\KK]{m}$ for some $m$. Finally, we will give a quite simple complete description of such maps in the case $\dim \HH = 2$. In order to explain this result, let us introduce a couple of more examples.

Denote by $\HH \otimes \KK$ the tensor product of Hilbert spaces. If $A$ and $B$ are bounded linear operators on $\HH$ and $\KK$, respectively, then $A \otimes B$ denotes their tensor product. In the finite-dimensional case, $A \otimes B$ will denote the Kronecker product of matrices $A$ and $B$.

\begin{exmp}\label{tenp}
	Let $P_0 \in \PH[\KK]{f}$. The map
	$$
	A \mapsto A \otimes P_0
	$$
	is a linear map $\fs \to \fs[\HH \otimes \KK]$ which maps $\PH{k}$ into $\PH[\HH \otimes \KK]{kr}$, where $r = \rk P_0$. If $P_0 \ne 0$, then this map is injective.
\end{exmp}

Denote by $\sh$ the real vector space of all bounded self-adjoint linear operators on $\HH$ and by $\PH{} \subset \sh$ the set of all projections.

\begin{exmp}\label{pq}
	Let $P_0, Q_0 \in \PH[\KK]{}$ and define $\psi \colon \fs \to \sh[\HH \otimes \KK]$ by
	\begin{eqnarray*}
		\psi(A) = A \otimes P_0 + \left(\tfrac{\tr A}{k} \mathcal I_\HH - A\right) \otimes Q_0, & A \in \fs.
	\end{eqnarray*}
	Then:
	\begin{itemize}
		\item $\psi$ is a linear map such that $\psi \left(\PH{k}\right) \subset \PH[\HH \otimes \KK]{}$.
		\item If $\dim \HH = n \in \NN$ and $P_0, Q_0 \in \PH[\KK]{f}$, then $\psi$ maps into $\fs[\HH \otimes \KK]$ and $\psi \left(\PH{k}\right) \subset \PH[\HH \otimes \KK]{m}$, where $m = k \rk P_0 + (n-k) \rk Q_0$.
		\item If $P_0 = Q_0$, then $\psi$ is as in Example \ref{con}.
		\item If $P_0 \ne Q_0$, then $\psi$ is injective.
		\item If $Q_0 = 0$, then $\psi$ is as in Example \ref{tenp}.
		\item If $P_0 = 0$, then $\psi$ is a composition of maps in Example \ref{tenp} and Example \ref{n-k}.
	\end{itemize}
\end{exmp}

We will show in Theorem \ref{2f} that in the case $\dim \HH = 2$, all linear maps $\fs \to \fs[\KK]$ which send projections of a fixed rank to projections of a finite rank are combinations of maps in Examples \ref{stu}, \ref{con}, and \ref{pq}.

In \cite{pp}, Pankov and Plevnik considered what happens if one omits the injectivity assumption in Theorem \ref{ppt}. They showed that in the very special case $k=1$, the only additional resulting map is the one from Example \ref{con}. The main result of this paper is that the same conclusion holds in the general case, see Theorem \ref{phkk}. In order to prove this result, we will first prove Theorem \ref{iho} which is interesting in itself and characterizes linear maps $\hz[2k] \to \hz[2k]$ which preserve the set $\ihz[2k]$. We will show that all such maps are unitary similarities, possibly composed by the entry-wise complex conjugation or multiplication by $-1$.

\section{The set $\hol[a]{X}{Y}$}\label{hola}

Every element $P \in \PH{}$ can be identified with its range. We denote by $\G{}$ the set of all closed subspaces of $\HH$, its subset of finite-dimensional subspaces by
$$
\G f = \{ X \in \G{} \mid \dim X < \infty \},
$$
and the Grassmann space of subspaces of a fixed finite dimension by
$$
\G k = \{ X \in \G{} \mid \dim X = k \}.
$$
For each $X \in \G{}$, $P_X \in \PH{}$ and $X^\bot$ denote the projection with range $X$ and the orthogonal complement of $X$, respectively.

For any $X, Y \in \G{f}$ and $a \in \RR$ define the set
$$
\hol[a]{X}{Y} := \left\{ Z \in \G{f} \, : \, a (P_X + P_Y) + \left(1-2a\right) P_Z \in \PH{f} \right\}.
$$

In \cite[Lemma 2]{geh-grass}, it was characterized, when $\hol[1]{X}{Y}$ is a one-dimensional manifold for $X, Y \in \G k$. The aim of this section is to precisely describe the set $\hol[a]{X}{Y}$ for $X, Y \in \G f$. In order to understand this set better, we will use the Wedin's version of the Halmos' two projections theorem. One can obtain this formulation by applying \cite[Corollary 2.2]{bsp}.

If $\HH$ is of dimension $m \in \NN$ and $\mathcal B$ is its orthonormal basis, then for any $A \in \sh$ denote the matrix representation of $A$ according to $\mathcal B$ by $A_{\mathcal B} \in \HH_m$.

\begin{prop}\label{halmd}
	Let $X, Y \in \G{f}$. Then there exist an orthonormal basis $\mathcal B$ of $X+Y$ such that
	$$
	(P_X|_{X+Y})_{\mathcal B} = I_m \oplus I_p \oplus 0_q \oplus \bigoplus_{j=1}^d \left[\begin{array}{cc}
		1 & 0 \\ 0 & 0
	\end{array}\right]
	$$
	and
	$$
	(P_Y|_{X+Y})_{\mathcal B} = I_m \oplus 0_p \oplus I_q \oplus \bigoplus_{j=1}^d \left[\begin{array}{cc}
		\cos^2 \varphi_j & \cos \varphi_j \sin \varphi_j \\ \cos \varphi_j \sin \varphi_j & \sin^2 \varphi_j
	\end{array}\right]
	$$
	for some $m, p, q, d \in \NN \cup \{0\}$ and $\varphi_1, \ldots, \varphi_d \in \left(0,\tfrac{\pi}{2}\right)$.
\end{prop}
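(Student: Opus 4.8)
The plan is to establish the finite-dimensional Halmos two-projections decomposition directly through the positive contraction $P_X P_Y P_X$, rather than unwinding \cite{bsp}. Let $P$ and $Q$ denote the orthogonal projections of $X+Y$ onto $X$ and $Y$; since $X, Y \subseteq X+Y$ these are exactly $P_X|_{X+Y}$ and $P_Y|_{X+Y}$. First I would introduce the three common reducing subspaces
\[
M_{11} = X \cap Y, \qquad M_{10} = X \cap Y^\bot, \qquad M_{01} = X^\bot \cap Y,
\]
of dimensions $m$, $p$, $q$. A short computation shows that they are mutually orthogonal, that each reduces both $P$ and $Q$, and that $(P,Q)$ acts on them as $(I,I)$, $(I,0)$, $(0,I)$, producing the summands $I_m \oplus I_m$, $I_p \oplus 0_p$ and $0_q \oplus I_q$. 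The fourth corner $X^\bot \cap Y^\bot$ equals $(X+Y)^\bot = \{0\}$ inside $X+Y$, which is why it never appears.

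The core step is to analyze $A := P_X P_Y P_X|_X$, a positive contraction on $X$. Using the identity $\langle A e, e \rangle = \|Q e\|^2$ for $e \in X$, the eigenvalue $1$ forces $Q e = e$, so $\ker(A - I) = M_{11}$, while the eigenvalue $0$ forces $Q e = 0$, so $\ker A = M_{10}$; the remaining spectrum therefore lies in the open interval $(0,1)$. I would pick an orthonormal eigenbasis $e_1, \dots, e_d$ with $A e_j = \cos^2 \varphi_j\, e_j$ and $\varphi_j \in \left(0, \tfrac{\pi}{2}\right)$, and for each $j$ set $h_j = \tfrac{1}{\cos \varphi_j}\, Q e_j \in Y$. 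One checks that $\|h_j\| = 1$, that $\langle e_j, h_j \rangle = \cos \varphi_j$, and that $h_j$ is an eigenvector of $Q P Q|_Y$ for the same eigenvalue. Completing $e_j$ to an orthonormal basis $\{e_j, g_j\}$ of the plane $V_j = \operatorname{span}\{e_j, h_j\}$ by Gram--Schmidt, a direct evaluation of $P e_j, P g_j, Q e_j, Q g_j$ produces precisely the two blocks $\left[\begin{smallmatrix} 1 & 0 \\ 0 & 0 \end{smallmatrix}\right]$ and $\left[\begin{smallmatrix} \cos^2 \varphi_j & \cos \varphi_j \sin \varphi_j \\ \cos \varphi_j \sin \varphi_j & \sin^2 \varphi_j \end{smallmatrix}\right]$ of the statement.

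It then remains to fit the pieces into an orthonormal basis of $X+Y$. Exploiting the identity $\langle Q e_j, e_l \rangle = \langle A e_j, e_l \rangle = \cos^2 \varphi_j\, \delta_{jl}$, I would verify that the planes $V_j$ are mutually orthogonal and orthogonal to $M_{11}$, $M_{10}$ and $M_{01}$. Finally the spectral decomposition of $A$ gives $\dim X = m + p + d$ and that of $Q P Q|_Y$ gives $\dim Y = m + q + d$, so that $\dim(X+Y) = \dim X + \dim Y - \dim(X \cap Y) = m + p + q + 2d$, which matches the number of basis vectors produced. Concatenating orthonormal bases of $M_{11}$, $M_{10}$, $M_{01}$ with the pairs $\{e_j, g_j\}$ yields the required basis $\mathcal B$. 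I expect the generic part to be the main obstacle: confirming the exact $2 \times 2$ block forms and the mutual orthogonality of the $V_j$ is where the bookkeeping with the auxiliary vectors $h_j$ and the orthogonality relation above does the real work, after which the dimension count closes the argument.
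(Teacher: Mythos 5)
Your argument is correct and complete as a self-contained proof. The paper itself does not prove Proposition \ref{halmd} at all: it simply states that this formulation of the Halmos two-projections theorem is obtained by applying Corollary 2.2 of the cited reference \cite{bsp}. So your route is necessarily different, and it is the classical Jordan/principal-angles argument carried out from scratch: split off the three trivial reducing subspaces $X\cap Y$, $X\cap Y^{\bot}$, $X^{\bot}\cap Y$, diagonalize the positive contraction $P_XP_YP_X|_X$, and for each eigenvector $e_j$ with eigenvalue $\cos^2\varphi_j\in(0,1)$ build the invariant plane $V_j=\operatorname{span}\{e_j,h_j\}$ with $h_j=\cos^{-1}\varphi_j\,P_Ye_j$. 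All the key verifications go through: $\|h_j\|=1$ and $\langle e_j,h_j\rangle=\cos\varphi_j$ follow from $\langle Ae,e\rangle=\|P_Ye\|^2$; the $2\times 2$ blocks come out exactly as stated once $g_j$ is the Gram--Schmidt complement; the relation $\langle P_Ye_j,e_l\rangle=\cos^2\varphi_j\,\delta_{jl}$ gives mutual orthogonality of the $V_j$ and their orthogonality to the three corner subspaces; and the dimension count $\dim(X+Y)=m+p+q+2d$ (using that $P_XP_YP_X$ and $P_YP_XP_Y$ have equal rank, so the nontrivial parts of $X$ and $Y$ both have dimension $d$) shows the constructed orthonormal system is a basis. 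What your approach buys is self-containedness and transparency about where the angles $\varphi_j$ come from; what the paper's citation buys is brevity, since this decomposition is standard. No gaps.
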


Note that $X$ and $Y$ are subspaces of the finite-dimensional complex vector space $X + Y$. In this context, $\varphi_1, \ldots, \varphi_d$ from Proposition \ref{halmd} are precisely principal angles between $X$ and $Y$ from the interval $\left(0,\tfrac{\pi}{2}\right)$, while $\tfrac{\pi}{2}$ and $0$ are principal angles of respective multiplicities $p+q$ and $m$. An interested reader can find more about principal angles e.g. in \cite{Galantai}, \cite{bsp} or \cite[Section VII.1]{bhat}.

We continue with some technical lemmas.

We will need the following property of solutions of the Sylvester equation. Denote the set of bounded linear operators on $\HH$ by $\bh$. For any $A \in \bh$, we denote its spectrum by $\sigma(A)$. Let $X \oplus Y$ be the direct sum of $X, Y \in \G{}$. Depending on the context, $A \oplus B$ may denote the direct sum of matrices or operators $A$ and $B$.

\begin{lemma}\label{abinv}
	Let $A, B \in \sh$ and $C \in \bh$. Let $M \in \G{}$ be invariant for $A, B, C$ such that $\sigma(A|_{M^\bot}) \cap \sigma(B|_{M}) = \emptyset$ and $B|_{M} \in \sh[M]$ is a compact operator. If $D \in \bh$ is such that $AD-DB = C$, then $M$ is invariant for $D$.
\end{lemma}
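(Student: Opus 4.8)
The plan is to pass to the block decomposition of all operators with respect to the orthogonal splitting $\HH = M \oplus M^\bot$ and reduce the claim $D(M) \subseteq M$ to a homogeneous Sylvester identity on the off-diagonal block, which I will then kill using the spectral theorem for the compact operator $B|_M$. First I would observe that since $A, B \in \sh$ are self-adjoint and $M$ is invariant for them, the orthogonal complement $M^\bot$ is invariant as well: for $y \in M^\bot$ and $x \in M$ we have $\langle Ay, x\rangle = \langle y, Ax\rangle = 0$ because $Ax \in M$. Hence $A$ and $B$ are block diagonal, i.e. they commute with the orthogonal projection $P_{M^\bot}$ onto $M^\bot$, with diagonal blocks $A|_M, A|_{M^\bot}$ and $B|_M, B|_{M^\bot}$. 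Invariance of $M$ for $C$ gives $P_{M^\bot} Cv = 0$ for every $v \in M$. Proving that $M$ is invariant for $D$ amounts to showing $P_{M^\bot} Dv = 0$ for all $v \in M$.

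Next I would exploit that $B|_M$ is a compact self-adjoint operator on $M$. By the spectral theorem for compact self-adjoint operators, $M$ admits an orthonormal basis consisting of eigenvectors of $B|_M$. Fix such an eigenvector $v \in M$, say $Bv = \mu v$. Applying the identity $AD - DB = C$ to $v$ yields $(A - \mu)Dv = Cv$. Projecting onto $M^\bot$ and using that $A$ commutes with $P_{M^\bot}$ and that $P_{M^\bot} Cv = 0$, I obtain
\begin{equation*}
\left(A|_{M^\bot} - \mu\right) P_{M^\bot} Dv = 0.
\end{equation*}
Since $\mu \in \sigma(B|_M)$ and $\sigma(A|_{M^\bot}) \cap \sigma(B|_M) = \emptyset$, we have $\mu \notin \sigma(A|_{M^\bot})$, so $A|_{M^\bot} - \mu$ is invertible on $M^\bot$ and therefore $P_{M^\bot} Dv = 0$.

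Finally I would pass from this spanning family of eigenvectors to all of $M$: as $D$ is bounded and $P_{M^\bot}$ is continuous, the relation $P_{M^\bot} Dv = 0$ extends from the dense span of the eigenvectors to every $v \in M$, which gives $D(M) \subseteq M$, the assertion of the lemma.

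The hard part will be the spectral input rather than any computation: it is precisely the compactness of $B|_M$ that guarantees an orthonormal basis of eigenvectors, so the problem reduces to the elementary pointwise identity above instead of a contour-integral (Rosenblum) argument for the full Sylvester operator on $\bh$. The only point requiring care is the eigenvalue $\mu = 0$, which genuinely occurs in $\sigma(B|_M)$ once $M$ is infinite-dimensional; this is harmless, since each chosen basis vector is an honest eigenvector with eigenvalue $\mu \in \sigma(B|_M)$, and disjointness of the spectra forces $\mu \notin \sigma(A|_{M^\bot})$ uniformly, so the same inversion step applies in every case.
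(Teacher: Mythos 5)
Your proof is correct and follows essentially the same route as the paper: decompose with respect to $M \oplus M^\bot$, extract the homogeneous relation $A|_{M^\bot}\,(P_{M^\bot}D|_M) = (P_{M^\bot}D|_M)\,B|_M$ from the invariance of $M$ for $C$, kill the off-diagonal block on each eigenvector of the compact self-adjoint operator $B|_M$ using the spectral disjointness, and conclude by density of the span of eigenvectors.
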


\begin{proof}
	According to the decomposition $\HH = M \oplus M^\bot$ we have
	\begin{eqnarray*}
		A = A_1 \oplus A_2, & B = B_1 \oplus B_2, & D = \begin{bmatrix}
			D_{11} & D_{12} \\
			D_{21} & D_{22}
		\end{bmatrix}.
	\end{eqnarray*}
	Since $M$ is invariant for $C$, we have $A_2D_{21} = D_{21}B_1$.
	Let $x \in M$ be an eigenvector for $B_1$ and let $\lambda$ be the corresponding eigenvalue. Then
	$$
	A_2 D_{21}x = \lambda D_{21}x.
	$$
	Since $\lambda \notin \sigma \left(A_2\right)$, we have $D_{21}x = 0$. Because $B_1$ is a compact self-adjoint operator  on $M$, we have $D_{21}x = 0$ for all $x \in M$.
\end{proof}

The following proposition is the main result of this section. We will modify the orthonormal basis from \ref{halmd} in such a way that the matrix of $P_X+P_Y$ will be diagonal.

Let us first introduce some notation. For $X, Y \in \G f$, the so-called gap metric is defined by
\begin{eqnarray*}
\dg{X}{Y} = \left\| P_X - P_Y \right\|,
\end{eqnarray*}
where $\|\cdot\|$ denotes the operator norm. By applying Proposition \ref{halmd}, we deduce that $d \left(X, Y\right)$ equals the sine of the largest principal angle between $X$ and $Y$.

For all $W, Z \in \G{}$ denote
$$
\left[W, Z\right] = \left\{ X \in \G{} \mid W \subset X \subset Z \right\}.
$$
For each $m \in \NN$, we will denote the set of all projections in $\HH_m$ by $\mathcal P_m$. We denote
$$
A \circ B = AB+BA
$$
if $A$ and $B$ are matrices or operators.

\begin{prop}\label{prophol}
	Let $X, Y \in \G f$ and $a \in \left(\tfrac{1}{2}, \infty\right)$. Then we have $\hol[a]{X}{Y} \ne \emptyset$ if and only if
	\begin{equation}\label{da}
		\dg{X}{Y} \le \tfrac{\sqrt{2a-1}}{a}.
	\end{equation}
	There exists an orthonormal basis $\mathcal B$ of $X+Y$ such that
	$$
	(P_X|_{X+Y})_{\mathcal B} = I_m \oplus I_p \oplus 0_q \oplus \bigoplus_{j=1}^r \begin{bmatrix}
		\tfrac{1+\cos \varphi_j}{2} I_{m_j}  & \tfrac{\sin \varphi_j}{2} I_{m_j} \\
		\tfrac{\sin \varphi_j}{2} I_{m_j} & \tfrac{1-\cos \varphi_j}{2} I_{m_j}
	\end{bmatrix}
	$$
	and
	$$
	(P_Y|_{X+Y})_{\mathcal B} = I_m \oplus 0_p \oplus I_q \oplus \bigoplus_{j=1}^r \begin{bmatrix}
		\tfrac{1+\cos \varphi_j}{2} I_{m_j}  & -\tfrac{\sin \varphi_j}{2} I_{m_j} \\
		-\tfrac{\sin \varphi_j}{2} I_{m_j} & \tfrac{1-\cos \varphi_j}{2} I_{m_j}
	\end{bmatrix}
	$$
	for some $m, p, q, r \in \NN \cup \{0\}$, $m_1, \ldots, m_r \in \NN$, and $0 < \varphi_r < \ldots < \varphi_1 < \tfrac{\pi}{2}$.

If \eqref{da} holds, then $\hol[a]{X}{Y}$ consists of those $Z \in [X \cap Y,X+Y]$, for which
\begin{equation}\label{pzmat}
	(P_Z|_{X+Y})_{\mathcal B} = I_m \oplus Q \oplus \bigoplus_{j=1}^r \begin{bmatrix}
	t_j I_{m_j}  & \sqrt{t_j\left(1-t_j\right)} U_j^\ast \\
	\sqrt{t_j\left(1-t_j\right)} U_j & \left(1-t_j\right) I_{m_j}
\end{bmatrix}
\end{equation}
for some $Q \in \mathcal P_{p+q}$ and $U_j \in \un[m_j]$, where
\begin{eqnarray*}
	t_j = \tfrac{(1+\cos \varphi_j) \left(a(1+\cos \varphi_j)-1\right)}{2(2a-1) \cos \varphi_j}, & j = 1, \ldots, r.
\end{eqnarray*}
\end{prop}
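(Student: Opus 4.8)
The plan is to transport everything to the finite-dimensional space $M:=X+Y$ and to run a spectral analysis of the positive operator $S:=P_X+P_Y$.

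I would first establish the normal form. Starting from the basis of Proposition \ref{halmd}, I apply an orthogonal rotation by $\varphi_j/2$ inside each $2\times2$ block, i.e. pass to the bisector basis of the pair of lines $X,Y$ in that block. This turns $P_X$ and $P_Y$ into the symmetric rank-one projections with the asserted entries, and in particular makes $S$ equal to $\diag(1+\cos\varphi_j,\,1-\cos\varphi_j)$ on each block. Collecting equal angles into their common multiplicity $m_j$ and ordering $0<\varphi_r<\cdots<\varphi_1<\tfrac\pi2$ yields the displayed matrices for $P_X,P_Y$ and simultaneously diagonalizes $S$.

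Next I would localize, showing every $Z\in\hol[a]{X}{Y}$ satisfies $X\cap Y\subseteq Z\subseteq X+Y$. Write $R:=aS+(1-2a)P_Z$, a projection by hypothesis, so $aS=R+(2a-1)P_Z$ is a sum of positive operators. Since $\ker S=(X+Y)^\bot$, testing this identity against $v\in(X+Y)^\bot$ forces $P_Zv=0$; hence $Z\subseteq X+Y$ and, as $R=R^\ast$, also $R$ is supported on $M$. On $X\cap Y$ one has $S=2I$, so $\langle Rv,v\rangle=2a\|v\|^2+(1-2a)\|P_Zv\|^2$, and $R\le I$ with $2a-1>0$ gives $\|P_Zv\|=\|v\|$, i.e. $v\in Z$; thus $X\cap Y\subseteq Z$. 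From here the problem lives on the finite-dimensional $M$, where $S$ has the pairwise distinct eigenvalues $2$ (on $X\cap Y$), $1$ (multiplicity $p+q$), and $1\pm\cos\varphi_j$.

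Now I would extract the block structure and the values. Expanding $R^2=R$ and reading off the $(k,l)$ block relative to the eigenspaces of $S$, the off-diagonal relation collapses to $(\lambda_k+\lambda_l-2)(P_Z)_{kl}=0$; hence $(P_Z)_{kl}=0$ unless $\lambda_k+\lambda_l=2$. The only such pairings are $1+\cos\varphi_j$ with $1-\cos\varphi_j$ (same $j$) and the eigenvalue $1$ with itself, so $P_Z$ reduces along the decomposition $(X\cap Y)\oplus(M_1\oplus M_2)\oplus\bigoplus_j M_{(j)}$. The same invariance follows from Lemma \ref{abinv} applied to the Sylvester equation $SP_Z-P_Z(2I-S)=(S-aS^2)/(1-2a)$, whose invariant subspaces are the spectral subspaces of $S$. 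The diagonal block at eigenvalue $1\pm\cos\varphi_j$ pins the scalar there to $t_jI$ on top and $(1-t_j)I$ on the bottom with $t_j$ exactly as displayed, while the off-diagonal block must satisfy $B_jB_j^\ast=B_j^\ast B_j=t_j(1-t_j)I$, forcing $B_j=\sqrt{t_j(1-t_j)}\,U_j^\ast$ with $U_j\in\un[m_j]$. On the eigenvalue-$1$ space the equation degenerates to $a^2=a$, so a nonzero $Q\in\mathcal P_{p+q}$ survives only when $a=1$.

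Finally I would settle existence. The block above is a genuine projection precisely when $t_j\in[0,1]$, and a short computation rewrites $t_j\in[0,1]$ as $(a-1)^2\le a^2\cos^2\varphi_j$, equivalently $\sin\varphi_j\le\sqrt{2a-1}/a$. Since $\sin$ increases on $(0,\tfrac\pi2)$ the binding constraint is the largest angle, and as $\dg{X}{Y}$ is the sine of the largest principal angle this is exactly \eqref{da}; conversely any choice of $Q$ and $U_j$ with these $t_j$ reassembles into a projection $R$, giving \eqref{pzmat}. When $p+q>0$ there is a right-angle direction, so $\dg{X}{Y}=1$ and \eqref{da} forces $a=1$, matching the eigenvalue-$1$ dichotomy; for $a\ne1$, \eqref{da} is strict of the form $\dg{X}{Y}<1$, whence $p+q=0$ and $Q$ is vacuous. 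The main obstacle I anticipate is the bookkeeping of this third step: keeping the eigenvalue pairings straight, reading the exact $t_j$ off the diagonal block, and correctly diagnosing that the eigenvalue-$1$ block is over-determined unless $a=1$. That last point is precisely what ties the "extra" right-angle directions to the nonemptiness inequality and is easy to miss; by contrast the equivalence $t_j\in[0,1]\Leftrightarrow$ \eqref{da} is routine once phrased as $(a-1)^2\le a^2\cos^2\varphi_j$.
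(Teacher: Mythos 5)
Your proposal is correct and follows essentially the same route as the paper: the bisector rotation of the Halmos basis from Proposition \ref{halmd}, reduction of any $Z \in \hol[a]{X}{Y}$ to $[X\cap Y, X+Y]$, and a block analysis of the linearized projection equation along the spectral subspaces of $P_X+P_Y$ (your Sylvester equation is exactly the paper's relation \eqref{pxya}, which the paper likewise handles via Lemma \ref{abinv}). The only minor variations are your direct positivity argument for the localization step and your explicit verification that $t_j\in[0,1]$ is equivalent to $\sin\varphi_j\le\sqrt{2a-1}/a$, a computation the paper leaves implicit in the equivalence of \eqref{da} and \eqref{pqt}.
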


\begin{rem}
	If $a = 1$, then $t_j = \tfrac{1+\cos \varphi_j}{2}$ and \eqref{da} holds for all $X, Y \in \G{f}$.
\end{rem}

\begin{proof}
	We take the orthonormal basis of $X+Y$ from Proposition \ref{halmd} and first permute it and then transform it via the transition matrix
	$$
	I_{m+p+q} \oplus \bigoplus_{j=1}^r\left[ \begin{array}{cc}
		\cos \tfrac{\varphi_j}{2} I_{m_j} & \sin \tfrac{\varphi_j}{2} I_{m_j} \\
		\sin \tfrac{\varphi_j}{2} I_{m_j} & - \cos \tfrac{\varphi_j}{2} I_{m_j}
	\end{array}
	\right]
	$$
	to get the desired matrix representations of $P_X$ and $P_Y$.
	
	Note that \eqref{da} is equivalent to
	\begin{eqnarray}\label{pqt}
		(p+q>0 \implies a=1) & {\rm and} & 0 \le t_j \le 1, \, j = 1, \ldots, r.
	\end{eqnarray}
	
	If \eqref{pqt} holds and $Z \in \left[X \cap Y,X+Y\right]$ is as in \eqref{pzmat}, then it is straightforward to check that $Z \in \hol[a]{X}{Y}$.
	
	Suppose now that $\hol[a]{X}{Y}$ is not empty and let $Z$ be its element. Then $a(P_X+P_Y)+(1-2a)P_Z$ is a projection.	Denote $A = P_X+P_Y- \mathcal I_\HH$. Then $X+Y$ is invariant for $A$ and we have
	$$
	(A|_{X+Y})_{\mathcal B} = I_m \oplus 0_{p+q} \oplus \bigoplus_{j=1}^r \cos \varphi_j \left(I_{m_j} \oplus \left(-I_{m_j}\right)\right)
	$$
	and
	\begin{equation}\label{pxya}
	P_Z \circ A = A + \tfrac{1}{2a-1}(aA^2+(a-1)\mathcal I_\HH).
	\end{equation}
	Denote $M = \left(X \cap Y\right) \oplus \left(X+Y\right)^\bot$. Since $M^\bot \in \G{f}$ is invariant for $A$, \eqref{pxya} enables us to apply Lemma \ref{abinv} to deduce that $M^\bot$ is invariant for $P_Z \in \fs$, hence so is $M$. Since $A$ acts like $\mathcal I \oplus \left(-\mathcal I\right)$ on $M$, we obtain via a straightforward calculation from \eqref{pxya} that $P_Z$ acts like $\mathcal I \oplus 0$ on $M$. Thus, $Z \in [X \cap Y,X+Y]$. We apply \eqref{pxya} and Lemma \ref{abinv} again to deduce that 
	$$
	(P_Z|_{X+Y})_{\mathcal B} = I_m \oplus Q \oplus \bigoplus_{j=1}^r R_j
	$$
	for some $Q \in \mathcal P_{p+q}$, and $R_j \in \mathcal P_{2m_j}$, $j = 1, \ldots, r$, such that $0_{p+q} = (a-1) I_{p+q}$ and
	\begin{eqnarray*}
		\left(I_{m_j} \oplus \left(-I_{m_j}\right)\right) \circ R_j = 2 \left(\left(t_jI_{m_j}\right) \oplus \left(\left(t_j-1\right)I_{m_j}\right)\right), & j = 1, \ldots, r.
	\end{eqnarray*}
	Thus, \eqref{pqt} holds, and we get the form \eqref{pzmat}.
\end{proof}

\begin{cor}\label{dimhol}
	Let $a \in \left(\tfrac{1}{2}, \infty\right) \setminus \{1\}$ and $X, Y \in \G f$ such that $\hol[a]{X}{Y} \ne \emptyset$. Then there exists $k \in \NN \cup \{0\}$, $k \le \dim \HH$, such that $\{X, Y\} \cup \hol[a]{X}{Y} \subset \G k$.
\end{cor}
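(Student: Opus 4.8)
The plan is to extract everything from the normal form established in Proposition \ref{prophol}, so that the corollary becomes pure rank bookkeeping. Since $\hol[a]{X}{Y} \ne \emptyset$, Proposition \ref{prophol} guarantees that \eqref{da} holds, and the proof of that proposition records that \eqref{da} is equivalent to \eqref{pqt}. The decisive point is the first half of \eqref{pqt}, namely the implication $(p + q > 0 \implies a = 1)$: because the corollary assumes $a \ne 1$, it forces $p = q = 0$. This is really the whole content of the statement; when $a = 1$ the set $\hol[1]{X}{Y}$ genuinely contains subspaces of different dimensions, since the block $Q \in \mathcal P_{p+q}$ in \eqref{pzmat} may have any rank between $0$ and $p + q$.

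With $p = q = 0$ in hand, I would simply count ranks in the basis $\mathcal B$ of $X + Y$. From the representations of $P_X$ and $P_Y$ in Proposition \ref{prophol}, each $2 m_j \times 2 m_j$ diagonal block is a projection of rank $m_j$ (its trace is $m_j$), so
$$
\dim X = m + \sum_{j=1}^r m_j = \dim Y =: k.
$$
For an arbitrary $Z \in \hol[a]{X}{Y}$ the middle summand of \eqref{pzmat} is $Q \in \mathcal P_{p+q} = \mathcal P_0$, hence absent, and each remaining block
$$
\begin{bmatrix}
	t_j I_{m_j} & \sqrt{t_j(1-t_j)}\, U_j^\ast \\
	\sqrt{t_j(1-t_j)}\, U_j & (1-t_j) I_{m_j}
\end{bmatrix}
$$
is, for $t_j \in [0,1]$ and $U_j \in \un[m_j]$, a self-adjoint idempotent — a one-line verification that its square equals itself — of trace $m_j$ and therefore of rank $m_j$. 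Summing gives $\dim Z = m + \sum_{j=1}^r m_j = k$ as well.

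Finally, $k = \dim Z \le \dim(X + Y) \le \dim \HH$, so the asserted $k \in \NN \cup \{0\}$ satisfies $k \le \dim \HH$ and $\{X, Y\} \cup \hol[a]{X}{Y} \subset \G k$. I expect no real obstacle here: the only step carrying any weight is the observation that $a \ne 1$ kills the $p$- and $q$-parts, which are precisely the directions in which $\dim X$, $\dim Y$, and $\dim Z$ could otherwise disagree (note $\dim X - \dim Y = p - q$). Everything after that is the trace-equals-rank identity for the blocks.
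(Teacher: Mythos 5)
Your proof is correct and follows essentially the same route as the paper's: both reduce to the observation that $a \neq 1$ forces $p = q = 0$ in the normal form of Proposition \ref{prophol} (the paper phrases this as $\dg{X}{Y} \le \tfrac{\sqrt{2a-1}}{a} < 1$, hence $X \cap Y^\bot = Y \cap X^\bot = \{0\}$), after which the common dimension of $X$, $Y$, and every $Z \in \hol[a]{X}{Y}$ is read off from the block structure. Your explicit trace-equals-rank bookkeeping for the $2m_j \times 2m_j$ blocks merely spells out what the paper leaves implicit.
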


\begin{proof}
	Proposition \ref{prophol} implies that $\dg{X}{Y} \le \tfrac{\sqrt{2a-1}}{a} < 1$. Hence, $X \cap Y^\bot = Y \cap X^\bot = \{0\}$. Proposition \ref{prophol} now yields that $X$, $Y$, and all elements of $\hol[a]{X}{Y}$ have the same dimension.
\end{proof}

\section{Real linear maps transforming trace zero hermitian unitary matrices}

In this section, we will study real linear maps which transform trace zero hermitian unitary matrices to unitary matrices. These results will be relevant in the proof of our main results because of the connection which will be later revealed in Lemma \ref{a0p} and Lemma \ref{phraz}.

The crucial results in this section are Proposition \ref{hura} and Theorem \ref{iho}. In order to prove them, we need some technical lemmas.

For each $A \in \mm$ denote
\begin{eqnarray*}
	\re A = \tfrac{1}{2} \left(A+A^\ast\right) & {\rm and} & |A| = \left(A^\ast A\right)^{\frac{1}{2}}.
\end{eqnarray*}
\begin{lemma}\label{prva}
	Let $m \in \NN$ and $X, Y, Z \in \mm$. We have
	\begin{equation}\label{cvs}
		\cos \varphi \, X + \sin \varphi \, Y + Z \in \un
	\end{equation}
	for all $\varphi \in \RR$ if and only if
	\begin{eqnarray*}
		|X|^2 = |Y|^2 = I_m - |Z|^2 & {\rm and} & \re \left(X^\ast Y\right) = \re \left(X^\ast Z\right) = \re \left(Y^\ast Z\right) = 0_m.
	\end{eqnarray*}
\end{lemma}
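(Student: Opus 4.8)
The plan is to turn the unitarity requirement into an identity between matrix-valued functions of $\varphi$ and then read off coefficients. Write $W_\varphi = \cos\varphi\, X + \sin\varphi\, Y + Z$. Since a square matrix is unitary exactly when $W^\ast W = I_m$, the condition $W_\varphi \in \un$ for all $\varphi$ is equivalent to $W_\varphi^\ast W_\varphi = I_m$ for all $\varphi$. So first I would expand the Gram matrix, using $X^\ast X = |X|^2$, $Y^\ast Y = |Y|^2$, $Z^\ast Z = |Z|^2$, and $A^\ast B + B^\ast A = 2\,\re(A^\ast B)$ for the cross terms, to get
\[
W_\varphi^\ast W_\varphi = \cos^2\varphi\,|X|^2 + \sin^2\varphi\,|Y|^2 + |Z|^2 + 2\cos\varphi\sin\varphi\,\re(X^\ast Y) + 2\cos\varphi\,\re(X^\ast Z) + 2\sin\varphi\,\re(Y^\ast Z).
\]

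Next I would pass to the harmonics $1, \cos 2\varphi, \sin 2\varphi, \cos\varphi, \sin\varphi$ via $\cos^2\varphi = \tfrac12(1+\cos 2\varphi)$, $\sin^2\varphi = \tfrac12(1-\cos 2\varphi)$, and $2\cos\varphi\sin\varphi = \sin 2\varphi$, which gives
\[
W_\varphi^\ast W_\varphi = \tfrac12\bigl(|X|^2 + |Y|^2\bigr) + |Z|^2 + \tfrac12\cos 2\varphi\,\bigl(|X|^2 - |Y|^2\bigr) + \sin 2\varphi\,\re(X^\ast Y) + 2\cos\varphi\,\re(X^\ast Z) + 2\sin\varphi\,\re(Y^\ast Z).
\]
For the \emph{if} direction I would simply substitute the hypotheses: the $\cos 2\varphi$, $\sin 2\varphi$, $\cos\varphi$, and $\sin\varphi$ coefficients all vanish, and the constant term becomes $\tfrac12(|X|^2+|Y|^2) + |Z|^2 = |X|^2 + |Z|^2 = (I_m - |Z|^2) + |Z|^2 = I_m$, so $W_\varphi^\ast W_\varphi = I_m$ identically.

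For the \emph{only if} direction I would invoke the real-linear independence of the five functions $1, \cos 2\varphi, \sin 2\varphi, \cos\varphi, \sin\varphi$ on $\RR$. Since $W_\varphi^\ast W_\varphi - I_m \equiv 0$ is a finite real-linear combination of these functions with matrix coefficients, independence forces every coefficient to be $0_m$. This yields $|X|^2 - |Y|^2 = 0_m$, $\re(X^\ast Y) = \re(X^\ast Z) = \re(Y^\ast Z) = 0_m$, and $\tfrac12(|X|^2 + |Y|^2) + |Z|^2 = I_m$; combining the first and the last relations gives $|X|^2 = |Y|^2 = I_m - |Z|^2$, which is exactly the claimed characterization.

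The only nontrivial point — hence the main obstacle — is justifying the coefficient matching, i.e. the linear independence of the five trigonometric functions. I would settle this either by evaluating the identity at finitely many values such as $\varphi \in \{0, \pi, \tfrac{\pi}{2}, -\tfrac{\pi}{2}, \tfrac{\pi}{4}\}$ and solving the resulting linear system for the matrix coefficients, or by integrating $W_\varphi^\ast W_\varphi - I_m$ against each harmonic over $[0,2\pi)$ and using their pairwise orthogonality; either route forces all coefficients to vanish entrywise. Everything else is the routine expansion and the elementary trigonometric rewriting sketched above.
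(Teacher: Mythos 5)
Your proof is correct and follows essentially the same route as the paper: expand $W_\varphi^\ast W_\varphi$, identify the trigonometric coefficients, and match them against $I_m$ (the paper merely uses $\sin^2\varphi = 1-\cos^2\varphi$ in place of your double-angle rewriting and leaves the linear-independence step implicit). Your explicit justification of the coefficient matching fills in the one detail the paper omits.
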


\begin{proof}
	For any $\varphi \in \RR$ we have \eqref{cvs} if and only if
	\begin{eqnarray*}
		I_m = |\cos \varphi \, X + \sin \varphi \, Y + Z|^2 =
		|Z|^2 + |Y|^2 + \cos^2 \varphi \left(|X|^2 - |Y|^2\right) + \\
		2 \cos \varphi \sin \varphi \re \left(X^\ast Y\right) + 2 \cos \varphi \re \left(X^\ast Z\right) + 2 \sin \varphi \re \left(Y^\ast Z\right).
	\end{eqnarray*}
\end{proof}


By applying Lemma \ref{prva} for $Z = 0_m$ and unitary $X$ and $Y$, we get the following.

\begin{cor}\label{phk}
	Let $m, n \in \NN$, $\mathcal M \subset \mm$ a real vector subspace, and $\phi \colon \mathcal M \to \mm[n]$ a real linear map such that $\phi(\mathcal M \cap \un) \subset \un[n]$. Then
	\begin{eqnarray*}
		\re \left(U^\ast V\right) = 0_m \implies \re \left(\phi(U)^\ast \phi(V)\right) = 0_n, & U, V \in \mathcal M \cap \un.
	\end{eqnarray*}
\end{cor}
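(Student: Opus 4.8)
The plan is to reduce everything to Lemma~\ref{prva} with $Z=0_m$. Suppose $U,V \in \mathcal M \cap \un$ satisfy $\re(U^\ast V) = 0_m$. The key observation is that under this hypothesis, the entire linear combination $\cos\varphi\, U + \sin\varphi\, V$ stays unitary for all $\varphi \in \RR$. Indeed, setting $X=U$, $Y=V$, and $Z=0_m$ in Lemma~\ref{prva}, the conditions $|X|^2 = |Y|^2 = I_m - |Z|^2$ become $|U|^2 = |V|^2 = I_m$, which hold because $U$ and $V$ are unitary, and the conditions $\re(X^\ast Z) = \re(Y^\ast Z) = 0_m$ are automatic since $Z=0_m$. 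The remaining condition $\re(X^\ast Y) = \re(U^\ast V) = 0_m$ is exactly our hypothesis. Hence Lemma~\ref{prva} gives $\cos\varphi\, U + \sin\varphi\, V \in \un$ for every $\varphi \in \RR$.

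Next I would push this family through the map $\phi$. Since $\cos\varphi\, U + \sin\varphi\, V \in \mathcal M \cap \un$ for all $\varphi$ (it lies in $\mathcal M$ because $\mathcal M$ is a real vector subspace containing $U$ and $V$, and it is unitary by the previous step), the hypothesis $\phi(\mathcal M \cap \un) \subset \un[n]$ yields
\begin{eqnarray*}
	\phi(\cos\varphi\, U + \sin\varphi\, V) \in \un[n], & \varphi \in \RR.
\end{eqnarray*}
By real linearity of $\phi$, this equals $\cos\varphi\, \phi(U) + \sin\varphi\, \phi(V)$. So $\cos\varphi\, \phi(U) + \sin\varphi\, \phi(V) \in \un[n]$ for all $\varphi \in \RR$.

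Finally I would apply Lemma~\ref{prva} in the reverse direction, now in $\mm[n]$, with $X = \phi(U)$, $Y = \phi(V)$, and $Z = 0_n$. Having just established that $\cos\varphi\, \phi(U) + \sin\varphi\, \phi(V) + 0_n \in \un[n]$ for all $\varphi$, the lemma's conclusion gives in particular $\re(\phi(U)^\ast \phi(V)) = 0_n$, which is exactly what we want. This completes the argument.

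There is essentially no obstacle here: the entire statement is a clean two-way application of Lemma~\ref{prva} (forward to produce the one-parameter unitary family, backward to extract the orthogonality of the images), glued together by the linearity of $\phi$ and the subspace property of $\mathcal M$. The only point requiring a moment's care is verifying that $Z=0_m$ trivializes the conditions involving $Z$ in both directions, so that the single nontrivial condition $\re(X^\ast Y)=0$ is the one being transported.
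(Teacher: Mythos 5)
Your proof is correct and is exactly the argument the paper intends: the paper states the corollary with the one-line remark ``By applying Lemma \ref{prva} for $Z = 0_m$ and unitary $X$ and $Y$,'' which is precisely your two-way application of that lemma (forward to get the unitary circle $\cos\varphi\, U + \sin\varphi\, V$, backward after pushing it through $\phi$). Nothing to add.
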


\begin{lemma}\label{abh}
	Let $m, n \in \NN$. If $A, B \in \mm[2n,2m]$ are such that $A\left(H \otimes I_m\right)=\left(H \otimes I_n\right)B$ for all $H \in \ihz[2]$, then $A=B=I_2 \otimes T$ for some $T \in \mm[m,n]$.
\end{lemma}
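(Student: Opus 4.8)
The plan is to exploit the fact that $\ihz[2]$, although defined as the set of traceless hermitian involutions, contains a convenient $\RR$-basis of the three-dimensional real space $\hz[2]$, namely the Pauli matrices
\[
\sigma_x = \begin{bmatrix} 0 & 1 \\ 1 & 0 \end{bmatrix}, \quad
\sigma_y = \begin{bmatrix} 0 & -i \\ i & 0 \end{bmatrix}, \quad
\sigma_z = \begin{bmatrix} 1 & 0 \\ 0 & -1 \end{bmatrix},
\]
each of which is hermitian, traceless, and squares to $I_2$, hence lies in $\ihz[2]$. Both sides of the identity $A(H \otimes I_m) = (H \otimes I_n) B$ depend real-linearly on $H$, so in particular the hypothesis applies to $H \in \{\sigma_x, \sigma_y, \sigma_z\}$; I expect these three instances to be enough.

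Next I would pass to block form. Writing $A$ and $B$ as $2 \times 2$ block matrices $A = (A_{ij})$ and $B = (B_{ij})$ with $n \times m$ blocks, and using $H \otimes I = (h_{ij} I)$, one computes that the $(i,j)$ block of $A(H \otimes I_m)$ is $\sum_k h_{kj} A_{ik}$, while the $(i,j)$ block of $(H \otimes I_n) B$ is $\sum_k h_{ik} B_{kj}$. Substituting each of the three Pauli matrices then yields a small, explicit system of linear equations among the eight blocks $A_{ij}, B_{ij}$.

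The crucial observation is the comparison of the relations coming from $\sigma_x$ and $\sigma_y$. Their off-diagonal entries differ precisely by the factor $\pm i$, so the resulting block identities read $A_{12} = B_{21}$ together with $A_{12} = -B_{21}$, and likewise $A_{21} = B_{12}$ together with $A_{21} = -B_{12}$. These force all four off-diagonal blocks $A_{12}, A_{21}, B_{12}, B_{21}$ to vanish. The remaining relations — the two diagonal identities from $\sigma_z$ together with the off-diagonal-block identities from $\sigma_x$ — then give $A_{11} = B_{11} = A_{22} = B_{22}$. Setting $T$ to be this common diagonal block, one concludes $A = B = I_2 \otimes T$, as claimed.

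I do not anticipate any genuine obstacle: once the reduction to the three Pauli matrices is made, the argument is a finite linear computation. The only point deserving care is the bookkeeping of the block equations and, in particular, recognizing that it is exactly the sign discrepancy between the $\sigma_x$- and $\sigma_y$-relations that kills the off-diagonal blocks, so that a single element of $\ihz[2]$ would not suffice.
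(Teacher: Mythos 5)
Your proof is correct and follows essentially the same route as the paper: the paper tests the identity on $\diag(1,-1)$ and the family $\begin{bmatrix} 0 & e^{it} \\ e^{-it} & 0\end{bmatrix}$, which at $t=0$ and $t=-\tfrac{\pi}{2}$ are exactly your $\sigma_x$ and $\sigma_y$, and then does the same block bookkeeping (phrased there via $B=(H\otimes I_n)A(H\otimes I_m)$ being constant in $H$). The only cosmetic difference is that the paper exploits $H^{-1}=H$ to conjugate rather than equating the two sides block by block.
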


\begin{proof}
	Note that $B = \left(H \otimes I_n\right)A\left(H \otimes I_m\right)$ is constant when we vary $H \in \ihz[2]$. By choosing $H = \begin{bmatrix}
		& e^{it} \\
		e^{-it} &
	\end{bmatrix}$ for all $t \in \RR$ we deduce that $A = T \oplus T'$ for some $T, T' \in \mm[m,n]$. By choosing $H = \diag(1,-1)$ we obtain $T'=T$.
\end{proof}

\begin{prop}\label{h0u}
	Let $m \in \NN$ and let $\phi \colon \hz[2] \to \mm$ be a map. Then the following two statements are equivalent:
	\begin{itemize}
		\item $\phi$ is real linear and $\phi (\ihz[2]) \subset \un$.
		\item There exist $U, V \in \un$ and $n \in \NN$ such that $m = 2n$ and
		\begin{eqnarray*}
			\phi(A) = U \left( A \otimes I_n \right) V, & A \in \hz[2].
		\end{eqnarray*}
	\end{itemize}
\end{prop}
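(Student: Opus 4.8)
The plan is to prove the two implications separately, with almost all of the work in the direction ``linear and unitary-preserving $\Rightarrow$ tensor form''. The reverse implication is immediate: the map $A \mapsto U(A \otimes I_n)V$ is real linear, and for $A \in \ihz[2]$ the matrix $A$ is a $2 \times 2$ hermitian unitary, so $A \otimes I_n$ is unitary and hence $U(A \otimes I_n)V \in \un$ with $m = 2n$. For the forward direction, I would fix the real basis $\sigma_1, \sigma_2, \sigma_3$ of $\hz[2]$ consisting of the three Pauli matrices; each lies in $\ihz[2]$, so $X := \phi(\sigma_1)$, $Y := \phi(\sigma_2)$, $Z := \phi(\sigma_3)$ are unitary. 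Since both $\phi$ and $A \mapsto U(A \otimes I_n)V$ are real linear, it suffices to produce $U, V \in \un$ and $n$ with $m = 2n$ such that $U(\sigma_1 \otimes I_n)V = X$, $U(\sigma_2 \otimes I_n)V = Y$, and $U(\sigma_3 \otimes I_n)V = Z$.

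The next step extracts the structural constraints. Identifying $\ihz[2]$ with the unit sphere $\{a\sigma_3 + c\sigma_1 + d\sigma_2 : a^2+c^2+d^2 = 1\}$, I would apply Lemma \ref{prva} along the three coordinate great circles: since $\cos\varphi\,\sigma_i + \sin\varphi\,\sigma_j \in \ihz[2]$, its image $\cos\varphi\,\phi(\sigma_i) + \sin\varphi\,\phi(\sigma_j)$ is unitary for all $\varphi$, and Lemma \ref{prva} (with its ``$Z$'' taken to be $0$) gives $\re(X^\ast Y) = \re(X^\ast Z) = \re(Y^\ast Z) = 0$. Setting $P := Z^\ast X$ and $Q := Z^\ast Y$, these are unitary; from $\re(Z^\ast X) = \re(Z^\ast Y) = 0$ they are skew-hermitian, so $P^2 = Q^2 = -I$, and from $\re(P^\ast Q) = \re(X^\ast Y) = 0$ together with $P^\ast = -P$, $Q^\ast = -Q$ one obtains $PQ + QP = 0$. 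Thus $P, Q$ are anticommuting skew-hermitian unitaries.

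The heart of the argument, and the step I expect to be the main obstacle, is to show that such a pair is simultaneously unitarily equivalent to a multiple of the standard $2 \times 2$ pair, which also forces $m$ to be even. Here I would argue that $iP$ is a hermitian unitary with $(iP)^2 = I$, decompose $\CC^m = E_+ \oplus E_-$ into its $\pm 1$ eigenspaces, and note that $Q$ anticommutes with $iP$ and hence maps $E_\pm$ into $E_\mp$; invertibility of $Q$ then gives $\dim E_+ = \dim E_- =: n$ and $m = 2n$. Choosing an orthonormal basis $e_1, \dots, e_n$ of $E_+$, the vectors $Qe_1, \dots, Qe_n$ form an orthonormal basis of $E_-$, and in the orthonormal basis $(e_1, \dots, e_n, Qe_1, \dots, Qe_n)$ one reads off the block forms $P = (-i\sigma_3) \otimes I_n$ and $Q = (-i\sigma_2) \otimes I_n$. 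A single fixed $w_0 \in \un[2]$ conjugates the pair $(-i\sigma_3, -i\sigma_2)$ to $(\sigma_3\sigma_1, \sigma_3\sigma_2)$; its existence is exactly the statement that the two-dimensional $\ast$-representation of the quaternions is unique up to unitary equivalence (alternatively, $w_0$ can be written down by matching eigenvectors). Absorbing $w_0$ yields a unitary $V$ with $V^\ast(\sigma_3\sigma_1 \otimes I_n)V = P$ and $V^\ast(\sigma_3\sigma_2 \otimes I_n)V = Q$.

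Finally I would assemble the answer by setting $U := ZV^\ast(\sigma_3 \otimes I_n)$, which is unitary. Using $\sigma_3^2 = I$ one checks $U(\sigma_3 \otimes I_n)V = Z$, while $U(\sigma_1 \otimes I_n)V = ZV^\ast(\sigma_3\sigma_1 \otimes I_n)V = ZP = X$ and $U(\sigma_2 \otimes I_n)V = ZV^\ast(\sigma_3\sigma_2 \otimes I_n)V = ZQ = Y$, so by linearity $\phi(A) = U(A \otimes I_n)V$ for all $A \in \hz[2]$. The genuinely substantive point is the third paragraph, namely the simultaneous unitary normalization of the anticommuting skew-hermitian pair $(P,Q)$ and the attendant conclusion that $m$ must be even; the derivation of the constraints is a direct appeal to Lemma \ref{prva}, and the final assembly is a routine verification on the three generators.
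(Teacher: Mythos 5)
Your proof is correct and follows essentially the same route as the paper's: both apply Lemma \ref{prva} (the paper via its Corollary \ref{phk}) to the images of the three Pauli matrices, reduce to a pair of anticommuting (skew-)hermitian unitaries whose simultaneous eigenspace decomposition forces $m=2n$ and yields the standard block pair, and then assemble $U$ and $V$ from $\phi(\sigma_3)$ and the conjugating unitary. The only cosmetic difference is that the paper normalizes to hermitian unitaries $H,K$ with $H\circ K=0$ rather than your skew-hermitian $P,Q$.
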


\begin{proof}
	The second statement clearly implies the first one.
	
	Assume now that the first statement holds. Denote
	$$
	U_0 = \phi \left(\diag(1,-1)\right) \in \un.
	$$
	Corollary \ref{phk} implies that
	\begin{eqnarray*}
		\phi \left(\begin{bmatrix}
			0 & 1 \\
			1 & 0
		\end{bmatrix}\right) = iU_0H & {\rm and} & \phi \left(\begin{bmatrix}
			0 & i \\
			-i & 0
		\end{bmatrix}\right) = iU_0K
	\end{eqnarray*}
	for some $H, K \in \ih$ such that $H \circ K = 0_m$. Hence, $m = 2n$ for some $n \in \NN$ and there exists $V \in \un$ such that
	\begin{eqnarray*}
		H = V^\ast \begin{bmatrix}
			& -iI_n \\
			iI_n &
		\end{bmatrix} V
		& {\rm and} &
		K = V^\ast \begin{bmatrix}
			& I_n \\
			I_n &
		\end{bmatrix} V.
	\end{eqnarray*}
	The desired conclusion holds for $U = U_0 V^\ast \left(I_n \oplus \left(-I_n\right)\right) \in \un$.
\end{proof}

\begin{cor}\label{c2m}
	Let $m \in \NN$ and let $\phi \colon \hz[2] \to \HH_m$ be a map. Then the following two statements are equivalent:
	\begin{itemize}
		\item $\phi$ is linear and $\phi (\ihz[2]) \subset \ih$.
		\item There exist $p, q \in \NN \cup \{0\}$ and $U \in \un$ such that $m = 2(p+q)$ and
		\begin{eqnarray*}
			\phi(A) = U \left( \left(A \otimes I_p\right) \oplus \left(-A \otimes I_q\right) \right) U^\ast, & A \in \hz[2].
		\end{eqnarray*}
	\end{itemize}
\end{cor}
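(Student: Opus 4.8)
The plan is to derive the nontrivial implication (first statement $\Rightarrow$ second) from Proposition \ref{h0u} combined with Lemma \ref{abh}, rather than classifying triples of anticommuting hermitian unitaries directly; the hermitian constraint on the target is exactly what upgrades the conclusion of Proposition \ref{h0u} to the special form.

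The reverse implication is a routine check. For any $A \in \hz[2]$ the matrix $(A \otimes I_p) \oplus (-A \otimes I_q)$ is hermitian, and $A \mapsto U\big((A\otimes I_p)\oplus(-A\otimes I_q)\big)U^\ast$ is manifestly real linear and lands in $\HH_m$. If moreover $A \in \ihz[2]$, then $A^2 = I_2$, so each block squares to the identity; hence $(A\otimes I_p)\oplus(-A\otimes I_q) \in \ih$, and conjugation by the unitary $U$ keeps it in $\ih$.

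For the forward implication I would argue as follows. Assuming $\phi$ is linear with $\phi(\ihz[2]) \subset \ih$, and using $\ih \subset \un$ together with $\HH_m \subset \mm$, Proposition \ref{h0u} applies and gives $U, V \in \un$ and $n \in \NN$ with $m = 2n$ and $\phi(A) = U(A \otimes I_n)V$ for all $A \in \hz[2]$. The key extra input is that $\phi(A)$ is hermitian for \emph{every} $A \in \hz[2]$ (because $\phi$ takes values in $\HH_m$), not merely for $A \in \ihz[2]$. Writing $\phi(A) = \phi(A)^\ast$ and using $(A \otimes I_n)^\ast = A \otimes I_n$ gives $U(A \otimes I_n)V = V^\ast (A \otimes I_n) U^\ast$; multiplying on the left by $U^\ast$ and on the right by $U$ rewrites this, with $S := VU \in \un[2n]$, as
\[
(A \otimes I_n) S = S^\ast (A \otimes I_n), \qquad A \in \hz[2].
\]
In particular this holds for all $A \in \ihz[2]$, so Lemma \ref{abh} (with both tensor factors of size $n$) applies and forces $S^\ast = S = I_2 \otimes T$ for some $T \in \mm[n]$. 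Since $S$ is unitary and hermitian, so is $T$, i.e. $T \in \ih[n]$.

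It remains to collapse the formula. Substituting $V = SU^\ast = (I_2 \otimes T)U^\ast$ and using $(A \otimes I_n)(I_2 \otimes T) = A \otimes T$ yields $\phi(A) = U(A \otimes T)U^\ast$. Diagonalizing the hermitian unitary $T$ as $T = W\big(I_p \oplus (-I_q)\big)W^\ast$ with $W \in \un[n]$ and $p + q = n$, and absorbing $I_2 \otimes W$ into $U$, gives $\phi(A) = U_1\big(A \otimes (I_p \oplus (-I_q))\big)U_1^\ast$; a permutation of the tensor-product basis identifies $A \otimes (I_p \oplus (-I_q))$ with $(A \otimes I_p) \oplus (-A \otimes I_q)$, and absorbing that permutation into the unitary produces the claimed form, with $m = 2n = 2(p+q)$. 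I expect the only delicate point to be setting up the intertwining identity so that it matches the precise hypotheses of Lemma \ref{abh}; after that, the argument is bookkeeping with Kronecker products.
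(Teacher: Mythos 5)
Your argument is correct and follows essentially the same route as the paper's proof: apply Proposition \ref{h0u}, use hermiticity of the values to obtain an intertwining relation with $H \otimes I_n$ for $H \in \ihz[2]$, invoke Lemma \ref{abh} to force the unitary factor into the form $I_2 \otimes T$ with $T \in \ih[n]$, and finish by diagonalizing $T$ and permuting the tensor factors. The paper merely normalizes $V = I_m$ by a unitary similarity first, which is the same computation as your substitution $S = VU$.
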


\begin{proof}
	The second statement clearly implies the first one.
	
	Assume now that the first statement holds. By Proposition \ref{h0u}, there exist $U, V \in \un$ and $n \in \NN$ such that $m = 2n$ and
	\begin{eqnarray*}
		\phi(A) = U \left( A \otimes I_n \right) V, & A \in \hz[2].
	\end{eqnarray*}
	After composing $\phi$ with a unitary similarity, we may assume that $V = I_m$. Then
	\begin{eqnarray*}
		U \left( A \otimes I_n \right) \in \HH_m, & A \in \hz[2].
	\end{eqnarray*}
	Lemma \ref{abh} now yields that $U = I_2 \otimes H$ for some $H \in \ih[n]$. Hence,
	\begin{eqnarray*}
		\phi(A) = A \otimes H, & A \in \hz[2].
	\end{eqnarray*}
	After composing $\phi$ with a unitary similarity, we may assume that $H = I_p \oplus \left(-I_q\right)$ for some $p, q \in \NN \cup \{0\}$ such that $p+q=n$. In order to complete the proof, we compose $\phi$ with a permutation similarity.
\end{proof}

\begin{cor}\label{nuh}
	There does not exist a real linear map $\phi \colon \mm[2] \to \HH_2$ such that $\phi \left(\un[2]\right) \subset \ih[2]$.
\end{cor}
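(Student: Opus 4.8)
The plan is to assume for contradiction that such a real linear $\phi \colon \mm[2] \to \HH_2$ exists and to show that the quadratic form $A \mapsto -\det \phi(A)$ would then be forced to be simultaneously a sum of only three squares of linear functionals and positive definite on a four-dimensional subspace. Throughout, let $\sigma_1 = \left[\begin{smallmatrix} 0 & 1 \\ 1 & 0 \end{smallmatrix}\right]$, $\sigma_2 = \left[\begin{smallmatrix} 0 & -i \\ i & 0 \end{smallmatrix}\right]$, $\sigma_3 = \left[\begin{smallmatrix} 1 & 0 \\ 0 & -1 \end{smallmatrix}\right]$ denote the Pauli matrices.

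The first and decisive step is to prove that $\phi$ maps into the trace-zero subspace $\hz[2]$. Every $M \in \ih[2]$ has eigenvalues in $\{-1, 1\}$, so $\tr M \in \{-2, 0, 2\}$. Since $\phi$ is linear on a finite-dimensional space it is continuous, hence $U \mapsto \tr \phi(U)$ is a continuous map from the connected group $\un[2]$ into the discrete set $\{-2, 0, 2\}$, and is therefore equal to some constant $c$. Evaluating at $-U \in \un[2]$ gives $-c = \tr \phi(-U) = c$, so $c = 0$. The eight unitaries $I_2, i I_2, \sigma_1, i\sigma_1, \sigma_2, i\sigma_2, \sigma_3, i\sigma_3$ form a real basis of $\mm[2]$, so the linear functional $\tr \circ \phi$, vanishing on $\un[2]$, vanishes identically; thus $\phi(\mm[2]) \subset \hz[2]$.

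Now for every $U \in \un[2]$ the matrix $\phi(U) \in \ihz[2]$ is trace-zero hermitian with $\phi(U)^2 = I_2$, which for a $2 \times 2$ trace-zero matrix is the same as $\det \phi(U) = -1$. Writing $\phi(A) = u(A) \sigma_1 + v(A) \sigma_2 + w(A) \sigma_3$ with real linear functionals $u, v, w$, a direct computation gives $q(A) := -\det \phi(A) = u(A)^2 + v(A)^2 + w(A)^2$, a positive semidefinite quadratic form of rank at most three that equals $1$ on all of $\un[2]$. I would then restrict $q$ to the quaternion subspace $\ha := \{ x_0 I_2 + x_1 (i\sigma_1) + x_2 (i\sigma_2) + x_3 (i\sigma_3) : x_0, \dots, x_3 \in \RR \}$, on which $\det$ equals $x_0^2 + x_1^2 + x_2^2 + x_3^2$; hence $\ha \cap \un[2]$ is precisely the unit sphere $\{ \sum_j x_j^2 = 1 \}$, which is $SU(2)$. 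Since $q \equiv 1$ on this sphere and $q$ is homogeneous of degree two, $q|_{\ha}(x) = \sum_j x_j^2$ is positive definite of rank four. But the restriction of a form of rank at most three has rank at most three, a contradiction, completing the proof.

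The main obstacle is the trace-zero reduction in the second paragraph: it is exactly what lowers the relevant rank from four to three. Indeed, without it one would work with $\tfrac12 \tr \phi(A)^2$, a sum of four squares (rank at most four) entirely consistent with being $1$ on $\un[2]$, so no contradiction would appear; the whole point is that forcing the image into the three-dimensional space $\hz[2]$ is incompatible with the four-dimensional subgroup $SU(2) \subset \un[2]$. As an alternative ending after the reduction, one can observe that both $H \mapsto \phi(H)$ and $H \mapsto \phi(iH)$ restrict to linear isometries of $\hz[2] \cong \RR^3$ (they send the unit sphere $\ihz[2]$ into itself), and that $\phi(e^{i\theta} H) \in \ihz[2]$ for all $\theta$ forces their values to be everywhere orthogonal; this makes a suitable orthogonal map of $\RR^3$ skew-symmetric, which is impossible in odd dimension. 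The rank argument above is the shortest route.
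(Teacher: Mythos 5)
Your proof is correct, and it takes a genuinely different route from the paper's. The paper deduces the corollary from its classification machinery: it invokes Corollary \ref{c2m} to normalize $\phi|_{\hz[2]}$ to the identity, then uses Corollary \ref{phk} to obtain the anticommutation relation $\phi(iI_2) \circ H = 0_2$ for all $H \in \ihz[2]$, and Lemma \ref{abh} to force $\phi(iI_2) = 0_2$, contradicting $\phi(iI_2) \in \ih[2]$. You instead give a self-contained counting argument: connectedness of $\un[2]$ together with real linearity forces $\tr \circ \phi$ to vanish on a real basis of $\mm[2]$ consisting of unitaries, so $\phi$ lands in the three-dimensional space $\hz[2]$; consequently $-\det \circ \phi$ is a sum of three squares of real linear functionals, yet it must restrict to the positive definite rank-four form $\sum_j x_j^2$ on the four-dimensional quaternion subspace $\ha$, whose unit sphere lies in $\un[2]$ --- impossible, since the rank of a restricted quadratic form cannot exceed the rank of the form. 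Every step checks out (the determinant identities on $\hz[2]$ and on $\ha$ are standard, and the eigenvalue argument giving $\tr \phi(U) \in \{-2,0,2\}$ is sound). What the paper's route buys is uniformity: the same lemmas drive the higher-rank induction in Theorem \ref{iho}. What your route buys is independence from that machinery and a transparent identification of the obstruction: the target $\hz[2]$ has real dimension $3$ while the real span of $SU(2)$ has dimension $4$, so the quaternionic norm form cannot factor through three squares.
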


\begin{proof}
	Suppose that such $\phi$ exists. By Corollary \ref{c2m}, after composing $\phi$ with a unitary similarity and possibly multiplying it by $-1$, we may assume that
	\begin{eqnarray*}
		\phi(A) = A, & A \in \hz[2].
	\end{eqnarray*}
	Corollary \ref{phk} implies that
	\begin{eqnarray*}
		\phi(iI_2) \circ H = 0_2, & H \in \ihz[2].
	\end{eqnarray*}
	Lemma \ref{abh} yields $0_2 = \phi(iI_2) \in \ih[2]$, a contradiction.
\end{proof}

\begin{lemma}\label{xyuv}
	Let $m \in \NN$ and $X \in \mm$. If there exists $Y \in \mm$ such that
	\begin{equation}\label{xpy}
		X \pm Y \in \un,
	\end{equation}
	then there exist $U, V \in \un$, $p, q, r \in \NN \cup \{0\}$, $m_1, \ldots, m_r \in \NN$, and $0 < s_r < \ldots < s_1 < 1$ such that $p+q+\sum_{j=1}^r m_j = m$ and
	$$
	X = U \left(I_p \oplus 0_q \oplus \bigg(\bigoplus_{j=1}^r s_j I_{m_j}\bigg)\right) V.
	$$
	If $Y \in \mm$ satisfies \eqref{xpy}, then there exist $W \in \un[q]$ and $H_j \in \ih[m_j]$, $j = 1, \ldots, r$, such that
	$$
	Y = U \left(0_p \oplus W \oplus \bigg(\bigoplus_{j=1}^r i \sqrt{1 - s_j^2} H_j\bigg)\right) V.
	$$
\end{lemma}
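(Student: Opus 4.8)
The plan is to convert the hypothesis $X\pm Y\in\un$ into two algebraic identities, read off the structure of $X$ from a singular value decomposition, and then pin down $Y$ block by block. Expanding $(X+Y)^\ast(X+Y)=I_m$ and $(X-Y)^\ast(X-Y)=I_m$ and then adding and subtracting the two equations yields $X^\ast X+Y^\ast Y=I_m$ together with $\re(X^\ast Y)=0_m$. In particular $X^\ast X=I_m-Y^\ast Y\le I_m$, so every singular value of $X$ lies in $[0,1]$. I would then take a singular value decomposition $X=U\Sigma V$ with $U,V\in\un$ and $\Sigma$ diagonal with nonnegative entries, and, after absorbing a permutation into $U$ and $V$, group the singular values as $1$ (multiplicity $p$), $0$ (multiplicity $q$), and the values in $(0,1)$ sorted as $s_1>\dots>s_r$ with multiplicities $m_1,\dots,m_r$. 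This gives the asserted form of $X$, with $\Sigma=I_p\oplus0_q\oplus\bigoplus_{j=1}^r s_jI_{m_j}$.

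For the second statement I would fix this decomposition and set $\tilde Y=U^\ast YV^\ast$, so that $X\pm Y=U(\Sigma\pm\tilde Y)V$ and hence $\Sigma\pm\tilde Y\in\un$. Running the computation of the first paragraph with $\Sigma$ in place of $X$ and $\tilde Y$ in place of $Y$, and using that for square matrices $M^\ast M=I_m$ is equivalent to $MM^\ast=I_m$, I obtain the four relations $\tilde Y^\ast\tilde Y=\tilde Y\tilde Y^\ast=I_m-\Sigma^2$, $\ \Sigma\tilde Y+\tilde Y^\ast\Sigma=0_m$, and $\tilde Y\Sigma+\Sigma\tilde Y^\ast=0_m$. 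The crucial point is that $\tilde Y$ commutes with $\Sigma^2$: from $\Sigma\tilde Y=-\tilde Y^\ast\Sigma$ and $\Sigma\tilde Y^\ast=-\tilde Y\Sigma$ one computes $\Sigma^2\tilde Y=\Sigma(\Sigma\tilde Y)=\Sigma(-\tilde Y^\ast\Sigma)=-(\Sigma\tilde Y^\ast)\Sigma=-(-\tilde Y\Sigma)\Sigma=\tilde Y\Sigma^2$. Since the eigenvalues $1,0,s_1^2,\dots,s_r^2$ of $\Sigma^2$ are pairwise distinct, with eigenspaces exactly the blocks above, this commutation forces $\tilde Y$ to be block diagonal, $\tilde Y=Y_0\oplus W\oplus\bigoplus_{j=1}^r Y_j$, conformally with $\Sigma$.

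It then remains to identify the diagonal blocks, on each of which $\Sigma$ is a scalar $\sigma$. On the $p$-block ($\sigma=1$) the relation $\tilde Y^\ast\tilde Y=I_m-\Sigma^2$ gives $Y_0^\ast Y_0=0_p$, hence $Y_0=0_p$. On the $q$-block ($\sigma=0$) the same relation gives $W^\ast W=I_q$, so $W\in\un[q]$, while $\Sigma\tilde Y+\tilde Y^\ast\Sigma=0_m$ imposes nothing there. On the $j$-block ($\sigma=s_j$) the relation $\Sigma\tilde Y+\tilde Y^\ast\Sigma=0_m$ reads $s_j(Y_j+Y_j^\ast)=0_{m_j}$, so $Y_j$ is skew-hermitian; writing $Y_j=iG_j$ with $G_j$ hermitian, the relation $\tilde Y^\ast\tilde Y=I_m-\Sigma^2$ gives $G_j^2=(1-s_j^2)I_{m_j}$, so $H_j:=(1-s_j^2)^{-1/2}G_j$ is a hermitian involution, i.e.\ $H_j\in\ih[m_j]$, and $Y_j=i\sqrt{1-s_j^2}\,H_j$. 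Assembling the blocks and conjugating back through $Y=U\tilde YV$ gives the stated form of $Y$. I expect the one genuinely non-obvious step to be the commutation $\Sigma^2\tilde Y=\tilde Y\Sigma^2$: it is what makes $\tilde Y$ block diagonal and decouples the problem, whereas ruling out the off-diagonal blocks between distinct singular values directly would be considerably more delicate.
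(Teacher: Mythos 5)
Your proof is correct, and for the key step it takes a genuinely different (and more self-contained) route than the paper. Both arguments begin identically: extract $X^\ast X+Y^\ast Y=I_m$ and $\re(X^\ast Y)=0_m$, take the singular value decomposition of $X$, and reduce to the case $X=\Sigma=I_p\oplus 0_q\oplus\bigoplus_j s_jI_{m_j}$. The paper then passes to the polar decomposition $Y=T|Y|$ with $|Y|=(I_m-\Sigma^2)^{1/2}$, and pins down the unitary factor $T$ by showing $E\circ\re Z=0$ and invoking its Sylvester-equation lemma (Lemma \ref{abinv}) to get block-diagonality of $\re Z$, then $\re Z=0$, and finally commutation of $Z$ with $E$. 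You instead exploit that unitarity of the square matrices $\Sigma\pm\tilde Y$ gives the relations on \emph{both} sides ($M^\ast M=I_m$ and $MM^\ast=I_m$), which yields the two identities $\Sigma\tilde Y=-\tilde Y^\ast\Sigma$ and $\Sigma\tilde Y^\ast=-\tilde Y\Sigma$ and hence the commutation $\Sigma^2\tilde Y=\tilde Y\Sigma^2$ directly; since the eigenvalues $1,0,s_1^2,\dots,s_r^2$ of $\Sigma^2$ are pairwise distinct, $\tilde Y$ is block diagonal by the elementary computation $(\lambda_i-\lambda_j)\tilde Y_{ij}=0$, and the blockwise identification of $Y_0=0_p$, $W\in\un[q]$, and $Y_j=i\sqrt{1-s_j^2}H_j$ with $H_j\in\ih[m_j]$ is exactly as you describe. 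What your approach buys is independence from the polar decomposition and from Lemma \ref{abinv}; what the paper's approach buys is a template that recurs elsewhere in the text, where Lemma \ref{abinv} is repeatedly the tool for extracting block structure (including in infinite-dimensional settings where a finite spectrum argument is less immediate). Both are complete proofs of the stated lemma.
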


\begin{proof}
	Let $Y \in \mm$ satisfy \eqref{xpy}. Then $X^\ast X + Y^\ast Y = I_m$ and $\re \left(X^\ast Y\right) = 0_m$. Hence, $\|X\| \le 1$, so $X$ has the above described singular value decomposition. Replace the pair $(X,Y)$ by the pair $(U^\ast X V^\ast, U^\ast Y V^\ast)$ and define
	\begin{eqnarray*}
		D = \bigoplus_{j=1}^r s_j I_{m_j} & {\rm and} & \Sigma = I_p \oplus 0_q \oplus D.
	\end{eqnarray*}
	Then $X = \Sigma$ and consequently, $\left|Y\right| = \left(I_m - \Sigma^2\right)^{\frac{1}{2}}$. Thus, if $T \in \un$ is from the polar decomposition $Y = T |Y| = T \left(I_m - \Sigma^2\right)^{\frac{1}{2}}$, then
	$$
	\re \left(\Sigma T \left(I_m - \Sigma^2\right)^{\frac{1}{2}}\right) = \re \left(X^\ast Y\right) = 0_m,
	$$
	from where we easily deduce that $T = Z' \oplus W \oplus Z$ for some $Z' \in \un[p]$, $W \in \un[q]$, and $Z \in \un[m-p-q]$ such that
	$$
	\re \left(D Z \left(I_{m-p-q} - D^2\right)^{\frac{1}{2}}\right) = 0_{m-p-q}.
	$$
	Hence, for $E = \left(I_{m-p-q} - D^2\right)^{\frac{1}{2}} D^{-1}$ we have
	\begin{equation}\label{reze}
		\re \left(ZE\right) = D^{-1} \re \left(D Z \left(I_{m-p-q} - D^2\right)^{\frac{1}{2}}\right) D^{-1} = 0_{m-p-q}.
	\end{equation}
	Consequently,
	\begin{equation}\label{erez}
		E \circ \re Z = \re \left(ZE\right) + Z^\ast \re \left(ZE\right)Z = 0_{m-p-q}.
	\end{equation}
	Lemma \ref{abinv} now implies that $\re Z = \bigoplus_{j=1}^r K_j$ for some $K_j \in \HH_{m_j}$, $j = 1, \ldots, r$. By applying \eqref{erez}, we obtain $K_j = 0_{m_j}$, $j = 1, \ldots, r$. Thus, $Z^\ast = -Z$. It now follows from \eqref{reze} that $Z$ commutes with $E$, which yields the required result.
\end{proof}

\begin{lemma}\label{upv}
	Let $k, m \in \NN$, $k \ge 2$, and let $\phi \colon \hz[2k] \to \mm$ be a real linear map such that
	\begin{equation}\label{phiku}
		\phi(\ihz[2k]) \subset \un.
	\end{equation}
	Then there exist $U, V \in \un$, $W \in \un[2k]$, $p, q, r \in \NN \cup \{0\}$, $n_1, \ldots, n_r \in \NN$, and real linear maps $\psi \colon \hz[2(k-1)] \to \mm[p]$ and $\phi_j \colon \hz[2(k-1)] \to \HH_{n_j}$ such that $q+r>0$, $p+2q+2\sum_{j=1}^r n_j = m$, $\psi \left(\ihz[2(k-1)]\right) \subset \un[p]$, $\phi_j \left(\ihz[2(k-1)]\right) \subset \ih[n_j]$, $j = 1, \ldots, r$, and for all $A \in \hz[2(k-1)]$, $B \in \hz[2]$ we have
	$$
	\phi \left(W^\ast \left(A \oplus B\right) W \right) = U \left(\psi(A) \oplus \left(B \otimes I_q\right) \oplus \bigoplus_{j=1}^r \left(s_j B \otimes I_{n_j} + i \sqrt{1-s_j^2} I_2 \otimes \phi_j(A)\right)\right) V.
	$$
\end{lemma}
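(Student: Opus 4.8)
The plan is to study the restriction of $\phi$ to the subspace of block-diagonal matrices $A\oplus B$ with $A\in\hz[2(k-1)]$ and $B\in\hz[2]$, writing $\zeta(A):=\phi(A\oplus 0)$ and $\beta(B):=\phi(0\oplus B)$. The key point is that for $A\in\ihz[2(k-1)]$ and $B\in\ihz[2]$ both $A\oplus B$ and $(-A)\oplus B$ lie in $\ihz[2k]$; since $\ihz[2(k-1)]$ spans $\hz[2(k-1)]$, it is enough to pin down $\zeta$ and $\beta$ on such test elements and extend by linearity. Fix a basis $B_1,B_2,B_3$ of $\hz[2]$ consisting of anticommuting hermitian unitaries (so $\sum_i x_iB_i\in\ihz[2]$ iff $\sum_i x_i^2=1$), put $Y_i:=\beta(B_i)$, and fix any $A_0\in\ihz[2(k-1)]$, which exists because $k\ge 2$.

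For every unit vector $(x_1,x_2,x_3)$ we have $A_0\oplus(\sum_ix_iB_i)\in\ihz[2k]$, hence $\zeta(A_0)+\sum_ix_iY_i\in\un$. Applying Lemma \ref{prva} to each coordinate pair (equivalently, expanding $|\,\cdot\,|^2=I_m$ over the sphere) gives $|Y_1|^2=|Y_2|^2=|Y_3|^2=I_m-|\zeta(A_0)|^2$, together with $\re(Y_i^\ast Y_l)=0$ for $i\ne l$ and $\re(Y_i^\ast\zeta(A_0))=0$. In particular $|\zeta(A)|^2=I_m-S^2$ for every $A\in\ihz[2(k-1)]$, where $S:=|Y_1|=|Y_2|=|Y_3|$, and by linearity $\re(Y_i^\ast\zeta(A))=0$ for all $A$. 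These relations also yield the conformality $|\beta(B)|=\|B\|\,S$, so the rescaled map $B\mapsto\beta(B)S^{-1}$ carries $\ihz[2]$ into unitaries on $\ran S$; Proposition \ref{h0u} then forces a tensor structure $\beta(B)=U_0(B\otimes I)V_0\,S$ on $\ran S$, exhibiting the even multiplicities and the factor $B\otimes I$ that will become the summands $B\otimes I_q$ (on the part where $S=1$) and $s_jB\otimes I_{n_j}$ (on the part where $S=s_j$).

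Next I apply Lemma \ref{xyuv} to $X=Y_1$ with partner $Y=\zeta(A_0)$, which is legitimate since $Y_1\pm\zeta(A_0)=\phi((\pm A_0)\oplus B_1)\in\un$. This produces a basis in which $S=\Sigma=I_p\oplus 0_q\oplus\bigoplus_js_jI_{m_j}$ and, simultaneously, $\zeta(A)$ is block diagonal with a unitary-valued block $\psi(A)\in\un[q]$ on $\ker S$ and anti-hermitian blocks $i\sqrt{1-s_j^2}\,H_j(A)$ on the remaining eigenspaces of $\Sigma$, where $H_j(\ihz[2(k-1)])\subset\ih[m_j]$ (this is exactly the second half of Lemma \ref{xyuv}, extended by linearity). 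The block $\ker S$ gives the summand $\psi(A)$. On each remaining block I feed $\re(Y_i^\ast\zeta(A))=0$ (for all three $i$, i.e. for all $H\in\ihz[2]$) into the identity $\re\big((H\otimes I)\,\zeta(A)\big)=0$ coming from the tensor form of $\beta$; since $\zeta(A)$ is anti-hermitian on this block, this upgrades to the commutation $[\,H\otimes I,\,\zeta(A)\,]=0$ for all $H$, so $\zeta(A)$ lies in the commutant $I_2\otimes\mm[n_j]$ — here Lemma \ref{abh} identifies the commutant — producing precisely $i\sqrt{1-s_j^2}\,I_2\otimes\phi_j(A)$ with $\phi_j(\ihz[2(k-1)])\subset\ih[n_j]$ and $m_j=2n_j$. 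Collecting the three block types gives the asserted formula with $p+2q+2\sum_jn_j=m$.

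The main obstacle is the simultaneous normalization in the mixed blocks: one must realize the tensor structure of $\beta$ coming from Proposition \ref{h0u} and the block-diagonal anti-hermitian structure of $\zeta$ coming from Lemma \ref{xyuv} in one and the same orthonormal basis, checking that the two-sided unitaries used to standardize $\beta$ interact with the sign pattern of $B\otimes I$ so as to preserve the anti-hermiticity of the $\zeta$-blocks (this is what makes the coefficients $s_j$ and $\sqrt{1-s_j^2}$ lock together). The remaining delicate point is the strict inequality $q+r>0$, which asserts that $\beta\not\equiv 0$; since purely block-diagonal test elements never force this, it must be established using an element of $\ihz[2k]$ whose off-diagonal blocks couple the two summands, fed into Corollary \ref{phk} (or Lemma \ref{prva}) to rule out $\phi(0\oplus\,\cdot\,)=0$.
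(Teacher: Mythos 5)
Your toolkit is the right one (Lemma \ref{prva}, Lemma \ref{xyuv}, Proposition \ref{h0u}, Lemma \ref{abh}), and the relations you extract from the sphere $\{A_0\oplus\sum_i x_iB_i\}$ are correct, but the two points you defer are not details --- they are where the proof lives, and on one of them your route cannot work. The ``main obstacle'' you name (realizing the tensor form of $\beta$ from Proposition \ref{h0u} and the block form of $\zeta$ from Lemma \ref{xyuv} in one orthonormal basis) is left entirely unresolved, and your order of operations makes it genuinely hard: after you standardize $B\mapsto\beta(B)S^{-1}$ via Proposition \ref{h0u}, nothing you have written forces $V_0SV_0^\ast$ to commute with every $B\otimes I$, so $\beta(B)=U_0(B\otimes I)V_0S$ does not yet split as $\bigoplus_j s_jB\otimes I_{n_j}$, and the two-sided unitaries $U_0,V_0$ need not match the $U,V$ coming from the singular value decomposition of $Y_1$. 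The paper sidesteps all of this by reversing the order: it first applies Lemma \ref{xyuv} with $X=\phi(H\oplus 0_2)$ for one fixed $H$ and with \emph{all} $\phi(0\oplus K)$, $K\in\ihz[2]$, as partners, which puts every $\beta(K)$ into the block form $0_p\oplus W_K\oplus\bigoplus_j s_jW_{j,K}$ in a single basis; only then does it apply Proposition \ref{h0u} blockwise (which changes bases only inside blocks), and finally it applies Lemma \ref{xyuv} a second time with $X=\phi(0\oplus K)$, whose singular value decomposition is now explicit, obtaining the form of all $\zeta(A)$ together with the relation $\re\left(\left(K\otimes I_{n_j}\right)Z_j\right)=0_{m_j}$ that feeds Lemma \ref{abh}. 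If you insist on your order, you must additionally prove that $V_0S^2V_0^\ast$ commutes with every $B\otimes I$ (extractable from the fact that $\beta(B)\beta(B)^\ast$ is independent of $B\in\ihz[2]$) and then still reconcile bases with the $Y_1$-decomposition; none of this is in your write-up.

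The second gap is worse: your plan for $q+r>0$ rests on a false hope. You propose to rule out $\phi(0_{2(k-1)}\oplus\,\cdot\,)\equiv 0$ by feeding in elements of $\ihz[2k]$ with off-diagonal blocks, but no such argument can succeed: the map $\phi(A)=\rho_{2k}(Ae_1)$ of Example \ref{ev} satisfies the hypothesis \eqref{phiku} and annihilates $0_{2(k-1)}\oplus\hz[2]$, because $\left(0_{2(k-1)}\oplus B\right)e_1=0$. The inequality $q+r>0$ is only attainable after a preliminary unitary conjugation of the domain: since $\hz[2k]$ is spanned by matrices unitarily similar to $0_{2(k-1)}\oplus\diag(1,-1)$ and $\phi\ne 0$, one may replace $\phi$ by $A\mapsto\phi\left(W^\ast AW\right)$ for a suitable $W\in\un[2k]$ so that $\phi\left(0_{2(k-1)}\oplus\diag(1,-1)\right)\ne 0_m$, after which $q+r>0$ falls out of the normalized form of $\beta$. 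This conjugation is the very first step of the paper's proof and is absent from yours; without it, the conclusion you are trying to establish fails for the standard block decomposition.
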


\begin{proof}
	It follows from \eqref{phiku} that $\phi \ne 0$. Since the real vector space $\hz[2k]$ can be generated by matrices which are unitarily similar to $0_{2(k-1)} \oplus \diag(1,-1)$, there exists a matrix of this type which is not in the kernel of $\phi$. After replacing $\phi$ with the map $A \mapsto \phi\left(W^\ast A W\right)$ for a proper $W \in \un[2k]$, we may assume that
	\begin{equation}\label{pqk}
		\phi \left(0_{2(k-1)} \oplus \diag(1,-1)\right) \ne 0_m.
	\end{equation}
	
	For all $H \in \ihz[2(k-1)]$ and $K \in \ihz[2]$ we have
	$$
	\phi \left(0_{2(k-1)} \oplus K\right) \pm \phi \left(H \oplus 0_2\right) \in \un.
	$$
	We first fix $H$ and apply Lemma \ref{xyuv} to deduce that there exist $p, q', r \in \NN \cup \{0\}$, $0 < s_r < \ldots < s_1 < 1$, $m_1, \ldots, m_r \in \NN$ such that $p + q' + \sum_{j=1}^r m_j = m$ and after composing $\phi$ with multiplications by unitaries from each side we may assume that for each $K \in \ihz[2]$ there exist $W \in \un[q']$ and $W_j \in \un[m_j]$, $j = 1, \ldots, r$, such that
	$$
	\phi \left(0_{2(k-1)} \oplus K\right) = 0_p \oplus W \oplus \left(\bigoplus_{j=1}^r s_j W_j\right).
	$$
	By Proposition \ref{h0u}, there exist $q \in \NN \cup \{0\}$ and $n_j \in \NN$ such that $q' = 2q$ and $m_j = 2n_j$, $j = 1, \ldots, r$, and after composing $\phi$ with multiplications by unitaries from each side we may assume that
	\begin{eqnarray*}
		\phi \left(0_{2(k-1)} \oplus B\right) = 0_p \oplus \left(B \otimes I_q\right) \oplus \left(\bigoplus_{j=1}^r s_j B \otimes I_{n_j}\right), & B \in \hz[2].
	\end{eqnarray*}
	It follows from \eqref{pqk} that $q+r>0$.
	
	We apply Lemma \ref{xyuv} again to deduce that for each $H \in \ihz[2(k-1)]$ there exist $Z \in \un[p]$ and $Z_j \in \un[m_j]$, $j = 1, \ldots, r$, such that
	$$
		\phi \left(H \oplus 0_2\right) = Z \oplus 0_{2q} \oplus \left(\bigoplus_{j=1}^r \sqrt{1-s_j^2} Z_j\right)
	$$
	and
	\begin{eqnarray*}
		\re \left(\left(K \otimes I_{n_j}\right) Z_j\right) = 0_{m_j}, & K \in \ihz[2].
	\end{eqnarray*}
	By Lemma \ref{abh}, there exists $K_j \in \ih[n_j]$ such that $Z_j = i I_2 \otimes K_j$.
\end{proof}

\begin{lemma}\label{toto}
	Let $k, m \in \NN$, $k \ge 2$, and let $\phi \colon \hz[2k] \to \mm$ be a real linear map such that
	$$
	\phi(\ihz[2k]) \subset \un.
	$$
	If $H \in \ihz[2(k-2)]$, $K \in \ih[2]$, $Z \in \un[2]$, $T \in \mm[2(k-1),2]$, and $V \in \mm[2(k-1),2(k-2)]$ are such that $\begin{bmatrix}
		V & T
	\end{bmatrix} \in \un[2(k-1)]$, then
	$$
		\left|\phi \left(\begin{bmatrix}
			& TZ \\
			Z^\ast T^\ast &
		\end{bmatrix}\right)\right|^2 = \left|\phi \left(TKT^\ast \oplus \left(-Z^\ast KZ\right)\right)\right|^2 = I_m - \left|\phi \left(VHV^\ast \oplus 0_2\right)\right|^2
	$$
	and
	$$
		\re \left(\phi \left(TKT^\ast \oplus \left(-Z^\ast KZ\right)\right)^\ast \phi \left(\begin{bmatrix}
			& TZ \\
			Z^\ast T^\ast &
		\end{bmatrix}\right)\right) = 0_m.
	$$
\end{lemma}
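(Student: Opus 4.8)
The plan is to deduce both displayed identities from Lemma~\ref{prva} applied to the three matrices
$$X = \phi\!\left(\begin{bmatrix} 0 & TZ \\ Z^\ast T^\ast & 0 \end{bmatrix}\right), \qquad Y = \phi\!\left(TKT^\ast \oplus \left(-Z^\ast KZ\right)\right), \qquad W = \phi\!\left(VHV^\ast \oplus 0_2\right),$$
which play the roles of $X, Y, Z$ in that lemma. Since $\re\!\left(Y^\ast X\right) = \re\!\left(X^\ast Y\right)$, the two assertions of Lemma~\ref{toto} are exactly the conditions $|X|^2 = |Y|^2 = I_m - |W|^2$ and $\re\!\left(X^\ast Y\right) = 0_m$ that Lemma~\ref{prva} produces. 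By linearity of $\phi$ and the hypothesis $\phi(\ihz[2k]) \subset \un$, it therefore suffices to exhibit hermitian matrices $\Xi_1, \Xi_2, \Xi_3 \in \hz[2k]$ whose images under $\phi$ are $X, Y, W$ and for which $\cos\varphi\,\Xi_1 + \sin\varphi\,\Xi_2 + \Xi_3 \in \ihz[2k]$ for every $\varphi \in \RR$; then $\cos\varphi\,X + \sin\varphi\,Y + W \in \un$ for all $\varphi$ and Lemma~\ref{prva} applies. The natural choices are the arguments of $\phi$ above, namely $\Xi_1 = \begin{bmatrix} 0 & TZ \\ Z^\ast T^\ast & 0 \end{bmatrix}$, $\Xi_2 = TKT^\ast \oplus \left(-Z^\ast KZ\right)$, and $\Xi_3 = VHV^\ast \oplus 0_2$, all written in the decomposition $\CC^{2k} = \CC^{2(k-1)} \oplus \CC^2$.

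The structural input is that $\begin{bmatrix} V & T \end{bmatrix} \in \un[2(k-1)]$, which I would unpack into $V^\ast V = I_{2(k-2)}$, $T^\ast T = I_2$, and $V^\ast T = 0$. These relations immediately give that each $\Xi_i$ is hermitian and trace zero: for $\Xi_2$ one has $\tr\!\left(TKT^\ast\right) = \tr\!\left(T^\ast T K\right) = \tr K = \tr\!\left(Z^\ast K Z\right)$, and for $\Xi_3$, $\tr\!\left(VHV^\ast\right) = \tr\!\left(V^\ast V H\right) = \tr H = 0$. Next I would introduce the unitary $\Omega = \begin{bmatrix} V & T & 0 \\ 0 & 0 & I_2 \end{bmatrix} \in \un[2k]$, whose columns are orthonormal precisely because of the three relations above (rows split as $2(k-1)+2$, columns as $2(k-2)+2+2$). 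A direct block multiplication then shows $\Xi_i = \Omega\, N_i\, \Omega^\ast$, where $N_1 = 0_{2(k-2)} \oplus \begin{bmatrix} 0 & Z \\ Z^\ast & 0 \end{bmatrix}$, $N_2 = 0_{2(k-2)} \oplus \begin{bmatrix} K & 0 \\ 0 & -Z^\ast KZ \end{bmatrix}$, and $N_3 = H \oplus 0_4$.

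Because $\Omega$ is unitary, the required membership $\cos\varphi\,\Xi_1 + \sin\varphi\,\Xi_2 + \Xi_3 \in \ihz[2k]$ is equivalent to $N(\varphi) := H \oplus B(\varphi) \in \ihz[2k]$, where $B(\varphi) = \begin{bmatrix} \sin\varphi\,K & \cos\varphi\,Z \\ \cos\varphi\,Z^\ast & -\sin\varphi\,Z^\ast KZ \end{bmatrix}$. The matrix $N(\varphi)$ is visibly hermitian, and it is trace zero since $\tr B(\varphi) = \sin\varphi\left(\tr K - \tr\!\left(Z^\ast KZ\right)\right) = 0$ and $\tr H = 0$; being block diagonal with $H^2 = I_{2(k-2)}$ (as $H \in \ihz[2(k-2)]$), it is unitary as soon as $B(\varphi)^2 = I_4$. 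This single $4 \times 4$ identity is the only genuine computation: using $K^2 = I_2$ and $ZZ^\ast = Z^\ast Z = I_2$, the two diagonal blocks of $B(\varphi)^2$ collapse to $\sin^2\varphi\,I_2 + \cos^2\varphi\,I_2 = I_2$, while the off-diagonal blocks vanish (for instance the top-right block is $\sin\varphi\cos\varphi\,KZ - \sin\varphi\cos\varphi\,Z Z^\ast KZ = 0$). Hence $N(\varphi) \in \ihz[2k]$ and so $\cos\varphi\,\Xi_1 + \sin\varphi\,\Xi_2 + \Xi_3 \in \ihz[2k]$ for all $\varphi$; applying $\phi$ and then Lemma~\ref{prva} yields both conclusions. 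I expect no real obstacle beyond guessing $\Omega$ and the block forms $N_i$, after which everything is routine; the degenerate case $k = 2$, where the $H$-block is empty and $\begin{bmatrix} V & T \end{bmatrix} = T \in \un[2]$, is covered by the same formulas with the empty block omitted.
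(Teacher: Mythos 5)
Your proposal is correct and takes essentially the same route as the paper: the paper likewise writes the combination as $\phi$ applied to the conjugate of the block-diagonal matrix $H \oplus \begin{bmatrix} \cos \varphi \, K & \sin \varphi \, Z \\ \sin \varphi \, Z^\ast & -\cos \varphi \, Z^\ast K Z \end{bmatrix} \in \ihz[2k]$ by the unitary $\begin{bmatrix} V & T \end{bmatrix} \oplus I_2$ (your $\Omega$) and then invokes Lemma \ref{prva}. The only difference is your immaterial interchange of $\cos \varphi$ and $\sin \varphi$.
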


\begin{proof}
	Let $U = \begin{bmatrix}
		V & T
	\end{bmatrix} \in \un[2(k-1)]$. For all $\varphi \in \RR$ we have
	\begin{eqnarray*}
		\phi \left(VHV^\ast \oplus 0_2\right) + \cos \varphi \, \phi \left(TKT^\ast \oplus \left(-Z^\ast KZ\right)\right) + \sin \varphi \, \phi \left(\begin{bmatrix}
			& TZ \\
			Z^\ast T^\ast &
		\end{bmatrix}\right) = \\
		\phi \left( \left(U \oplus I_2\right)
		\left( H \oplus \begin{bmatrix}
			\cos \varphi \, K & \sin \varphi \, Z \\
			\sin \varphi \, Z^\ast & -\cos \varphi \, Z^\ast KZ
		\end{bmatrix}\right)
		\left(U^\ast \oplus I_2\right) \right) \in \un.
	\end{eqnarray*}
	Lemma \ref{prva} yields the required result.
\end{proof}

\begin{lemma}\label{kq}
	Let $k, q \in \NN$, $k \ge 2$, and let $\phi \colon \hz[2k] \to \mm[2q]$ be a real linear map such that
	$$
	\phi(\ihz[2k]) \subset \un[2q]
	$$
	and there exist $U, V \in \un[2q]$ such that
	\begin{eqnarray*}
		\phi \left(A \oplus B\right) = U \left(B \otimes I_q\right) V, & A \in \hz[2(k-1)], B \in \hz[2].
	\end{eqnarray*}
	Then there exists a real linear map $\psi \colon \mm[2(k-1)] \to \HH_q$ such that $\psi \left(\un[2(k-1)]\right) \subset \ih[q]$.
\end{lemma}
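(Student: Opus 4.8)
The plan is to recover $\psi$ from the action of $\phi$ on the purely off-diagonal trace zero matrices. First I would compose $\phi$ with $X \mapsto U^\ast X V^\ast$; this preserves both hypotheses and, since the assertion is merely the existence of \emph{some} $\psi$, I may assume $U = V = I_{2q}$, so that $\phi(A \oplus B) = B \otimes I_q$ for all $A \in \hz[2(k-1)]$ and $B \in \hz[2]$. Let $\Phi \colon \mm[2(k-1),2] \to \mm[2q]$ be the real linear map $\Phi(T) = \phi\!\left(\begin{bmatrix} 0 & T \\ T^\ast & 0 \end{bmatrix}\right)$. The goal is to prove that $\Phi(T) = I_2 \otimes C(T)$ for a real linear $C \colon \mm[2(k-1),2] \to \mm[q]$ that takes skew-hermitian unitary values on the matrices with orthonormal columns; then $\psi$ is obtained by feeding $\Phi$ a distinguished pair of columns.

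I would fix $T \in \mm[2(k-1),2]$ with $\begin{bmatrix} V & T \end{bmatrix} \in \un[2(k-1)]$ and apply Lemma \ref{toto} with this $T$, with $Z = I_2$, with $K \in \ihz[2]$, and with any $H \in \ihz[2(k-2)]$. Because $V^\ast V = I$, the block $VHV^\ast$ is trace zero, so $\phi(VHV^\ast \oplus 0_2) = 0$; and because $K$ is trace zero, both blocks of $TKT^\ast \oplus (-K)$ are trace zero, whence $\phi(TKT^\ast \oplus (-K)) = (-K) \otimes I_q$. Lemma \ref{toto} then yields, first, $|\Phi(T)|^2 = |(-K) \otimes I_q|^2 = I_{2q}$, i.e. $\Phi(T)$ is unitary, and second, $\re\big((K \otimes I_q)\Phi(T)\big) = 0$ for every $K \in \ihz[2]$.

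The second relation rearranges to $(-\Phi(T)^\ast)(K \otimes I_q) = (K \otimes I_q)\Phi(T)$ for all $K \in \ihz[2]$, which is exactly the hypothesis of Lemma \ref{abh} with $A = -\Phi(T)^\ast$, $B = \Phi(T)$ and $m = n = q$. Hence $-\Phi(T)^\ast = \Phi(T) = I_2 \otimes C(T)$ for some $C(T) \in \mm[q]$; in particular $\Phi(T)$, and so $C(T)$, is skew-hermitian, and $C(T)$ is unitary because $\Phi(T)$ is. Since the matrices with orthonormal columns real-linearly span $\mm[2(k-1),2]$ and $I_2 \otimes \mm[q]$ is a real subspace, linearity of $\Phi$ promotes the identity $\Phi(T) = I_2 \otimes C(T)$ to all $T \in \mm[2(k-1),2]$, with $C$ real linear.

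Finally I would set $\psi(W) = -iC\!\left(W \begin{bmatrix} 0 \\ I_2 \end{bmatrix}\right)$, a real linear map $\mm[2(k-1)] \to \mm[q]$. For unitary $W$ the columns of $W \begin{bmatrix} 0 \\ I_2 \end{bmatrix}$ are orthonormal and extend to $W$, so $C$ of it is skew-hermitian unitary and $\psi(W) \in \ih[q]$; as $\un[2(k-1)]$ real-linearly spans $\mm[2(k-1)]$ and $\HH_q$ is a real subspace, $\psi$ indeed maps into $\HH_q$, as required. The delicate point, which I expect to be the crux, is the forced restriction to trace zero $K \in \ihz[2]$: the hypothesis pins down $\phi$ only on block-diagonal matrices whose two blocks are separately traceless, so the convenient identity $\phi(TKT^\ast \oplus (-K)) = (-K) \otimes I_q$ is unavailable for $K = I_2$, and one cannot produce skew-hermiticity from a direct ``$L = I_2$'' term. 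What makes the argument close is precisely that Lemma \ref{abh} asks for the relation only over $\ihz[2]$, which the traceless directions already supply.
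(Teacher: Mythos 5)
Your proof is correct and follows essentially the same route as the paper's: both apply Lemma \ref{toto} with $Z=I_2$ and trace-zero $K$, use the block-diagonal hypothesis to identify $\phi\left(TKT^\ast\oplus(-K)\right)$ with $(-K)\otimes I_q$ and $\phi\left(VHV^\ast\oplus 0_2\right)$ with $0$, and then invoke Lemma \ref{abh} to force $\Phi(T)=iI_2\otimes L$ with $L\in\mathcal I\mathcal H_q$. Your additional remarks (the normalization $U=V=I_{2q}$, the spanning arguments promoting the identity to all of $\mm[2(k-1),2]$ and placing $\psi$ into $\HH_q$, and the observation that Lemma \ref{abh} only needs the relation over $\ihz[2]$) just make explicit what the paper leaves implicit.
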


\begin{proof}
	Let $W \in \un[2(k-1)]$ and $T_0 \in \mm[2(k-1),2]$ such that $T_0^\ast T_0 = I_2$. Let $T = W T_0$. By Lemma \ref{toto},
	$\phi \left(\begin{bmatrix}
		& T \\
		T^\ast &
	\end{bmatrix}\right) \in \un[2q]$ and
	\begin{eqnarray*}
		\re \left(V^\ast \left(K \otimes I_q\right) U^\ast \phi \left(\begin{bmatrix}
			& T \\
			T^\ast &
		\end{bmatrix}\right)\right) = 0_{2q}, & K \in \ihz[2].
	\end{eqnarray*}
	Lemma \ref{abh} now implies that $U^\ast \phi \left(\begin{bmatrix}
		& T \\
		T^\ast &
	\end{bmatrix}\right) V^\ast = i I_2 \otimes L$ for some $L \in \ih[q]$.
\end{proof}

\begin{prop}\label{hura}
	Let $k, m \in \NN$ be such that there exists a real linear map $\phi \colon \hz[2k] \to \mm$ such that
	$$
	\phi(\ihz[2k]) \subset \un.
	$$
	Then $m \ge 2k$.
\end{prop}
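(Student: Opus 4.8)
The plan is to argue by induction on $k$, using Proposition \ref{h0u} for the base case and Lemma \ref{upv} together with Lemma \ref{kq} for the inductive step.

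The base case $k=1$ is immediate: a real linear map $\phi\colon \hz[2]\to\mm$ with $\phi(\ihz[2])\subset\un$ is exactly the first statement of Proposition \ref{h0u}, which forces $m=2n$ for some $n\in\NN$, so $m\ge 2 = 2k$.

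For the inductive step I would fix $k\ge 2$ and assume the statement for $k-1$, namely that every real linear map $\hz[2(k-1)]\to\mm[m']$ sending $\ihz[2(k-1)]$ into $\un[m']$ has $m'\ge 2(k-1)$. Let $\phi$ be as in the statement; since $0_m\notin\un$ while $\ihz[2k]\ne\emptyset$, the map $\phi$ is nonzero, so Lemma \ref{upv} applies. It yields, after composing with unitaries (which affects neither $m$ nor the hypothesis), a decomposition $m=p+2q+2\sum_{j=1}^r n_j$ with $q+r>0$, a real linear map $\psi\colon\hz[2(k-1)]\to\mm[p]$ with $\psi(\ihz[2(k-1)])\subset\un[p]$, and real linear maps $\phi_j\colon\hz[2(k-1)]\to\HH_{n_j}$ with $\phi_j(\ihz[2(k-1)])\subset\ih[n_j]$. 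I would then split into three cases. If $r\ge 1$, then viewing $\phi_1$ as a map into $\mm[n_1]$ and using $\ih[n_1]\subset\un[n_1]$, the inductive hypothesis gives $n_1\ge 2(k-1)$, so $m\ge 2n_1\ge 4(k-1)\ge 2k$. If $r=0$ and $p\ge 1$ (so $q\ge 1$ because $q+r>0$), the inductive hypothesis applied to $\psi$ gives $p\ge 2(k-1)$, whence $m=p+2q\ge 2(k-1)+2=2k$.

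The remaining case $r=0=p$ (so $q\ge 1$ and $m=2q$) is the one I expect to be the main obstacle, because here the decomposition of Lemma \ref{upv} collapses to $\phi(A\oplus B)=U(B\otimes I_q)V$ and carries no recursive data on $\hz[2(k-1)]$. This is precisely the hypothesis of Lemma \ref{kq}, which produces a real linear map $\Psi\colon\mm[2(k-1)]\to\HH_q$ with $\Psi(\un[2(k-1)])\subset\ih[q]$. Restricting $\Psi$ to the subspace $\hz[2(k-1)]\subset\mm[2(k-1)]$ and noting $\ihz[2(k-1)]\subset\un[2(k-1)]$ together with $\ih[q]\subset\un[q]$, the inductive hypothesis yields $q\ge 2(k-1)$, so $m=2q\ge 4(k-1)\ge 2k$. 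In every case $m\ge 2k$, which completes the induction. The only nonroutine point is recognizing that the degenerate tensor form has to be fed back into the induction not on $\hz[2(k-1)]$ directly but through Lemma \ref{kq}, after which restriction to the trace-zero hermitian subspace closes the argument.
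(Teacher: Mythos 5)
Your proposal is correct and follows essentially the same route as the paper: induction on $k$ with Proposition \ref{h0u} as base case, Lemma \ref{upv} to set up the decomposition $m = p+2q+2\sum_j n_j$, and the same three-way case split on $p$, $q$, $r$, including the key use of Lemma \ref{kq} to handle the degenerate case $r=p=0$. The only difference is cosmetic bookkeeping in how the cases are labelled.
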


\begin{proof}
	We apply induction on $k$. The required statement for $k = 1$ follows from Proposition \ref{h0u}.
	
	Assume now that $k \ge 2$ and that the statement holds for $k-1$. Let $p, q, r$ and $n_j$, $j = 1, \ldots, r$, be as in Lemma \ref{upv}. Since $q+r>0$, we distinguish three cases.
	
	\begin{case}{$r > 0$}\end{case}
	If $j \in \{1, \ldots, r\}$, then by Lemma \ref{upv} and the induction hypothesis we have
	\begin{equation*}
		m \ge 2n_j \ge 4(k-1) \ge 2k.
	\end{equation*}
	
	\begin{case}{$r = p = 0$, $q > 0$}\end{case}
	Lemma \ref{upv}, Lemma \ref{kq}, and the induction hypothesis yield $q \ge 2(k-1)$. Hence,
	\begin{equation*}
		m = 2q \ge 4(k-1) \ge 2k.
	\end{equation*}
	
	\begin{case}{$r = 0$, $p, q > 0$}\end{case}
	Lemma \ref{upv} and the induction hypothesis imply that
	\begin{equation*}
		m = p+2q \ge 2(k-1+q) \ge 2k.
	\end{equation*}
\end{proof}

\begin{rem}
	Example \ref{e2km} shows that maps in Proposition \ref{hura} need not be injective.
\end{rem}

\begin{theorem}\label{iho}
	Let $k \in \NN$ and let $\phi \colon \hz[2k] \to \hz[2k]$ be a map.
	Then the following two statements are equivalent:
	\begin{itemize}
		\item $\phi$ is linear and $\phi(\ihz[2k]) \subset \ihz[2k]$.
		\item There exist $U \in \un[2k]$ and $s \in \{-1,1\}$ such that we have either
		\begin{eqnarray*}
			\phi(A) = s UAU^\ast, & A \in \hz[2k],
		\end{eqnarray*}
		or $k \ge 2$ and
		\begin{eqnarray*}
			\phi(A) = s U\overline{A}U^\ast, & A \in \hz[2k].
		\end{eqnarray*}
	\end{itemize}
\end{theorem}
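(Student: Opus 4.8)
The implication from the second statement to the first is a direct verification: each of the maps $A \mapsto sUAU^\ast$ and $A \mapsto sU\overline A U^\ast$ is real linear, and it carries a trace-zero hermitian unitary to a trace-zero hermitian unitary, since unitary similarity, entrywise conjugation, and multiplication by $s\in\{-1,1\}$ all preserve being hermitian, being unitary, and having zero trace. I would therefore focus on the forward implication and proceed by induction on $k$. For $k=1$ the map $\phi\colon\hz[2]\to\hz[2]$ has image in $\hz[2]\subset\HH_2$ and satisfies $\phi(\ihz[2])\subset\ihz[2]\subset\ih[2]$, so Corollary \ref{c2m} with $m=2$ applies and forces $2=2(p+q)$, i.e. $p+q=1$; this yields exactly $\phi(A)=sUAU^\ast$ with $s\in\{-1,1\}$, and no conjugation term appears, as required.

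For the inductive step assume $k\ge2$ and that the theorem holds for $k-1$. Viewing $\phi$ as a real linear map into $\mm[2k]$ with $\phi(\ihz[2k])\subset\un[2k]$, I apply Lemma \ref{upv} to obtain the block decomposition of $\phi$ on the block-diagonal subspace $\{A\oplus B : A\in\hz[2(k-1)],\,B\in\hz[2]\}$, with data $U,V,p,q,r,n_j,s_j$ and the auxiliary maps $\psi$ and $\phi_j$. Since $0\notin\un$, every $\phi_j$ and, when $p\ge1$, the map $\psi$ are nonzero maps of the type covered by Proposition \ref{hura}; hence $n_j\ge2(k-1)$ for all $j$ and $p\ge2(k-1)$ whenever $p>0$. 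Feeding these bounds into $p+2q+2\sum_j n_j=2k$ together with $q+r>0$ leaves only a short list of possibilities. If $r\ge1$ then $2n_1\ge4(k-1)$ forces $k\le2$, and for $k=2$ one is pinned to $r=1$, $n_1=2$, $p=q=0$. If $r=0$ and $p=0$ then $q=k$ and $\phi(A\oplus B)=U(B\otimes I_q)V$; Lemma \ref{kq} then produces a real linear map $\mm[2(k-1)]\to\HH_k$ carrying $\un[2(k-1)]$ into $\ih[k]$, whose restriction to $\hz[2(k-1)]$ contradicts Proposition \ref{hura} when $k\ge3$ and Corollary \ref{nuh} when $k=2$; so this case never occurs. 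The only remaining option for every $k\ge2$ is $r=0$, $p=2(k-1)$, $q=1$.

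In this main case $\phi(A\oplus B)=U(\psi(A)\oplus B)V$, and the hypothesis that $\phi$ maps into $\hz[2k]$ (hermitian, trace zero) is the extra input beyond Lemma \ref{upv}. I would first exploit hermiticity of $\phi(A\oplus B)$ for $A\in\ihz[2(k-1)]$, $B\in\ihz[2]$ to force $V=U^\ast$ up to a scalar and to show that $\psi$ takes hermitian values; combined with $\tr B=0$ this gives $\tr\psi(A)=0$, so that $\psi\colon\hz[2(k-1)]\to\hz[2(k-1)]$ satisfies $\psi(\ihz[2(k-1)])\subset\ihz[2(k-1)]$ and the induction hypothesis applies, giving $\psi(A)=s'U'AU'^\ast$ or, only if $k-1\ge2$, $s'U'\overline A U'^\ast$. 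It then remains to pass from the block-diagonal data to all of $\hz[2k]$. For this I would use Lemma \ref{toto} on the off-diagonal hermitian elements $\begin{bmatrix}0&TZ\\Z^\ast T^\ast&0\end{bmatrix}$: the identities it supplies couple the $A$-block and the $B$-block and force a single unitary $U$ together with a single choice of sign and of conjugation to govern the whole map, so that $\phi(C)=sUCU^\ast$ or $\phi(C)=sU\overline C U^\ast$ for all $C\in\hz[2k]$. The exceptional case $k=2$, $r=1$ is handled separately: there $\phi_1\colon\hz[2]\to\HH_2$ is determined by Corollary \ref{c2m}, and after the same gluing the expression $s_1 B\otimes I_2+i\sqrt{1-s_1^2}\,I_2\otimes\phi_1(A)$ is recognized, using the identification of entrywise conjugation on $2\times2$ hermitian matrices with a unitary similarity, as a conjugate-similarity, producing the term $sU\overline A U^\ast$.

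The heart of the argument, and the step I expect to be hardest, is the passage from block-diagonal information to the full map. Lemma \ref{upv} only describes $\phi$ on $\hz[2(k-1)]\oplus\hz[2]$, and the reconstruction over the off-diagonal blocks via Lemma \ref{toto} must simultaneously pin down that $V=U^\ast$, that a single unitary works for every block, and that the local signs and the presence or absence of conjugation are globally consistent. In particular one must check that the conjugate alternative for $\psi$ is compatible with the $B$-block: since entrywise conjugation on a single $2\times2$ block is itself a unitary similarity up to sign, it is absorbed into $U$ and $V$ rather than appearing explicitly, and it is exactly this absorption that lets the global conjugation persist through the induction while excluding mixed behavior. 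Verifying this compatibility is the delicate bookkeeping that makes the extremal case $k=2$ and the conjugate branch genuinely different from the rest.
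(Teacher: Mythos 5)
Your overall skeleton (induction on $k$, reduction via Lemma \ref{upv}, the counting argument with Proposition \ref{hura}, and the use of Lemma \ref{kq} and Corollary \ref{nuh} to kill the case $r=0$, $p=0$) matches the paper, and your base case and your elimination of $r=0$, $p=0$ are correct. However, there is a genuine error in your treatment of the case $r\ge 1$. You claim that the surviving configuration $k=2$, $r=1$, $n_1=2$, $p=q=0$ is a legitimate case which, after gluing, ``is recognized as a conjugate-similarity, producing the term $sU\overline A U^\ast$.'' This is false: that case is vacuous. In it one would have $\phi(A\oplus B)=U\bigl(s_1 B\otimes I_2+i\sqrt{1-s_1^2}\,I_2\otimes\phi_1(A)\bigr)V$ with $0<s_1<1$, and the requirement that $\phi$ take values in $\hz[4]$ (hermitian!) is incompatible with this: setting $A=0$ and applying Lemma \ref{abh} forces $U V$ to be of the form $I_2\otimes H'$ with $H'\in\mathcal I\mathcal H_2$, and then setting $B=0$ forces $H'\circ\phi_1(A)=0$ for all $A$, whence $H'=0$ by another application of Lemma \ref{abh} --- a contradiction with $H'$ unitary. (The summand $B\otimes I_2$ is hermitian while $i\,I_2\otimes\phi_1(A)$ is skew-hermitian, so the two cannot coexist with nonzero coefficients in a hermitian-valued map.) The paper rules this case out exactly this way; the conjugate branch $sU\overline A U^\ast$ does \emph{not} come from here.

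The second, related problem is that the step you yourself identify as the hardest --- reconstructing $\phi$ on the off-diagonal blocks from its block-diagonal restriction --- is only gestured at, and it is precisely there that the conjugate branch actually arises. In the paper, after reducing to $r=0$, $p=2(k-1)$, $q=1$ and normalizing so that $\phi(A\oplus B)=(sA)\oplus B$, one studies $\tau(T)=\phi\left(\begin{bmatrix} 0 & T\\ T^\ast & 0\end{bmatrix}\right)$, proves via Lemma \ref{toto} and Lemma \ref{abh} that $s=1$ and $\tau(T)=\begin{bmatrix} 0 & z_T T\\ \overline{z_T}T^\ast & 0\end{bmatrix}$ with $|z_T|=1$, and then carries out a delicate analysis of the phases $z_T$ together with the images $\phi(P\oplus(-I_2))$. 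For $k\ge 3$ this forces $\phi=\mathrm{id}$ (up to the normalizations already performed), while for $k=2$ a residual sign $s_0=-1$ survives and is identified, via Example \ref{lk2k}, as $\phi(A)=-V_0\overline A V_0^\ast$. Your sketch does not supply this phase analysis, and because you have located the conjugate branch in the wrong case, following your outline literally would leave you unable to close either the (actually contradictory) $r=1$ case or the genuine $k=2$ conjugate case. The counting and elimination arguments are fine; the gap is in everything after them.
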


\begin{proof}
	One implication is clear, so assume that the first statement holds.
	
	We apply induction on $k$. The desired conclusion in the case $k = 1$ follows form Corollary \ref{c2m}.
	
	Assume now that $k \ge 2$ and that the statement holds for $k-1$. Let $U, V, W \in \un[2k]$, $p, q, r$ and $n_j$, $j = 1, \ldots, r$, be as in Lemma \ref{upv}. After replacing $\phi$ with the map $A \mapsto V \phi \left(W^\ast AW\right) V^\ast$, we may assume that $V = W = I_{2k}$.
	
	Seeking a contradiction, suppose that $r > 0$. By Lemma \ref{upv} and Proposition \ref{hura}, $n_j \ge 2(k-1)$, $j = 1, \ldots, r$. Since $n_j \le k$, we obtain that $k=n_1=2$, $r=1$, and $p=q=0$. By applying Lemma \ref{upv} and Corollary \ref{c2m} we deduce that after composing $\phi$ with a unitary similarity and changing $U$, we may assume that there exists $\varphi_0 \in \left(-\tfrac{\pi}{2},\tfrac{\pi}{2}\right) \setminus \{0\}$ such that for all $H, K \in \hz[2]$ we have
	$$
	U \left(\cos \varphi_0 \, K \otimes I_2 + i \sin \varphi_0 \, I_2 \otimes H \right) = \phi \left(H \oplus K\right) \in \hz[4].
	$$
	Choosing $H = 0_2$ implies that $U \left(K \otimes I_2\right) \in \hz[4]$. By Lemma \ref{abh}, $U = I_2 \otimes H'$ for some $H' \in \ih[2]$ such that $H' \circ H = 0_2$ for all $H \in \ihz[2]$. Another application of Lemma \ref{abh} yields $H' = 0_2$, a contradiction. Thus, $r = 0$ and $q = r+q > 0$.

Seeking another contradiction, assume that $p = 0$. Lemma \ref{upv}, Lemma \ref{kq}, and Proposition \ref{hura} yield $k \ge 2(k-1)$. Therefore, $k = 2$. By Lemma \ref{kq}, there exists a real linear map $\mm[2] \to \HH_2$ which maps $\un[2]$ into $\ih[2]$, contradicting Corollary \ref{nuh}. Thus, $p > 0$.

By Lemma \ref{upv} and Proposition \ref{hura}, $p \ge 2(k-1)$. Since $q > 0$ and $p + 2q = 2k$, we have $q = 1$ and $p = 2(k-1)$. Let $\psi$ be as in Lemma \ref{upv}. The induction hypothesis implies that there exists $s \in \{-1,1\}$ such that after again composing $\phi$ with a unitary similarity and changing $U$, we may assume that we have either
\begin{eqnarray*}
	\psi (A) = sA, & A \in \hz[2(k-1)],
\end{eqnarray*}
or $k \ge 3$ and
\begin{eqnarray*}
	\psi (A) = s\overline{A}, & A \in \hz[2(k-1)].
\end{eqnarray*}
If the second possibility occurs, then we apply Example \ref{lk2k} to conclude that after composing $\phi$ by entry-wise complex conjugation, multiplying it by $-1$, composing it with another unitary similarity and changing $U$, we may assume that
\begin{eqnarray*}
	\phi \left(A \oplus B\right) = U \left(\left(sA\right) \oplus B\right), & A \in \hz[2(k-1)], \, B \in \hz[2].
\end{eqnarray*}
Thus,
\begin{eqnarray*}
	U \left(A \oplus B\right) \in \hz[2k], & A \in \hz[2(k-1)], B \in \hz[2].
\end{eqnarray*}
We can apply Lemma \ref{abh} to deduce that $U = U' \oplus \left(\pm I_2\right)$ for some $U' \in \un[2(k-1)]$ such that $U' A \in \hz[2(k-1)]$ for all $A \in \hz[2(k-1)]$. We can choose $A = 0_{2(j-1)} \oplus H \oplus 0_{2(k-1-j)}$ for all $j \in \{1, \ldots, k-1\}$ and $H \in \ihz[2]$ and apply Lemma \ref{abh} again to show that $U' = \pm I_{2(k-1)}$. Hence, after possibly multiplying $\phi$ by $-1$ again, we may assume that
\begin{eqnarray*}
	\phi \left(A \oplus B\right) = \left(sA\right) \oplus B, & A \in \hz[2(k-1)], \, B \in \hz[2].
\end{eqnarray*}

	Define the real linear map $\tau \colon \mm[2(k-1),2] \to \hz[2k]$ by
	\begin{eqnarray*}
		\tau(T) = \phi \left(\begin{bmatrix}
				& T \\
				T^\ast &
			\end{bmatrix}\right), & T \in \mm[2(k-1),2].
	\end{eqnarray*}
	Let $T \in \mm[2(k-1),2]$ such that $T^\ast T = I_2$. Lemma \ref{toto} implies that
	\begin{equation}\label{tat}
		\tau(T)^2 = \left(TT^\ast\right) \oplus I_2
	\end{equation}
	and
	\begin{eqnarray}\label{stk}
		\left(\left(sTKT^\ast\right) \oplus \left(-K\right)\right) \circ \tau(T) = 0_{2k}, & K \in \ihz[2].
	\end{eqnarray}
	Denote $\tau(T) = \begin{bmatrix}
		A' & T' \\
		T'^\ast & B'
	\end{bmatrix}$ for some $A' \in \HH_{2(k-1)}$, $B' \in \HH_2$, $T' \in \mm[2(k-1),2]$. It follows from \eqref{stk} that $K \circ B' = 0_2$, $K \in \ihz[2]$. By Lemma \ref{abh}, $B' = 0_2$. We now deduce from \eqref{tat} that $A'^2 + T'T'^\ast = TT^\ast$ and $T'^\ast T' = I_2$. Hence, $\tr A'^2 = 0$, implying that $A' = 0_{2(k-1)}$. We obtain from \eqref{stk} that $s TKT^\ast T' = T'K$, $K \in \ihz[2]$. Consequently, $s KT^\ast T' = T^\ast T'K$, $K \in \ihz[2]$. By Lemma \ref{abh}, $s = 1$ and $T^\ast T' = z I_2$ for some $z \in \CC$. This and $T'T'^\ast = TT^\ast$ imply that $|z|=1$ and $T' = zT$. We have shown that
	\begin{eqnarray*}
		\phi \left(A \oplus B\right) = A \oplus B, & A \in \hz[2(k-1)], \, B \in \hz[2],
	\end{eqnarray*}
	and there exists $z_T \in \CC$ such that $|z_T| = 1$ and
	$$
		\tau(T) = \begin{bmatrix}
		& z_T T \\
		\overline{z_T} T^\ast &
	\end{bmatrix}.
	$$
	Let $L \in \ih[2(k-1)]$. By calculating $\tau \left(\tfrac{\sqrt{2}}{2} \left(T + i LT\right)\right)$ we obtain the existence of $z' \in \CC$ such that $|z'| = 1$ and
	$$
	z_T T + i z_{iLT} LT = z' \left(T + i LT\right).
	$$
	If $LT \ne \pm T$, then $LT$ is not a complex scalar multiple of $T$, so $z_{iLT} = z_T$, while the choice $L = I_{2(k-1)}$ yields
	\begin{equation}\label{zz}
		z_{iT} = \pm z_T.
	\end{equation}
	
	Fix now $T_0 = \begin{bmatrix}
		I_2 \\ 0_{2(k-2),2}
	\end{bmatrix}$. After composing $\phi$ with a unitary similarity we may assume that
	\begin{equation}\label{tto}
		z_{T_0} = z_{iLT_0} = 1
	\end{equation}
	for every $L \in \ih[2(k-1)]$ for which $LT_0 \ne \pm T_0$. By applying \eqref{zz}, we deduce that there exists $s_0 \in \{-1,1\}$ such that
	\begin{equation}\label{tito}
		z_{iT_0} = z_{LT_0} = s_0
	\end{equation}
	for every $L \in \ih[2(k-1)]$ for which $LT_0 \ne \pm T_0$.
	
	Let $P \in \mathcal P_{2(k-1),2}$. Then there exists $T \in \mm[2(k-1),2]$ such that $T^\ast T = I_2$ and $P=TT^\ast$. Let $Z \in \un[2]$. We apply Lemma \ref{toto} to show that
	\begin{equation}\label{ppi}
		\phi \left(P \oplus \left(-I_2\right) \right)^2 = P \oplus I_2
	\end{equation}
	and
	\begin{equation}\label{pzt}
	\phi \left(P \oplus \left(-I_2\right) \right) \circ \tau(TZ) = 0_{2k}.
	\end{equation}
	We claim that
	\begin{eqnarray}\label{tojeto}
		\phi \left(P \oplus \left(-I_2\right)\right) \circ \begin{bmatrix}
			& TZ \\ Z^\ast T^\ast &
		\end{bmatrix} = 0_{2k}, & Z \in \un[2].
	\end{eqnarray}
	Indeed, if $k = 2$, then
	$T_0 = I_2$ and $TZ \in \un[2]$ is a real linear combination of $I_2$, $iI_2$, $L$, and $iM$ for some $L, M \in \ihz[2]$, so \eqref{tto}, \eqref{tito}, \eqref{pzt} yield \eqref{tojeto}. On the other hand, if $k \ge 3$, then there exist $L, M \in \ihz[2(k-1)]$ such that $iLT_0 = MT_0 \ne \pm T_0$, so we obtain from \eqref{tto} and \eqref{tito} that $s_0 = 1$, implying that
	\begin{eqnarray*}
		\tau(Y) = \begin{bmatrix}
			& Y \\
			Y^\ast &
		\end{bmatrix}, & Y \in \mm[2(k-1),2].
	\end{eqnarray*}
	Again, \eqref{tojeto} follows from \eqref{pzt}.
	
	Assume again that $k \ge 2$. We assert that there exists $s_P \in \{-1,1\}$ such that
	\begin{equation}\label{phpi}
		\phi \left(P \oplus \left(-I_2\right) \right) = s_P \left(P \oplus \left(-I_2\right) \right).
	\end{equation}
	Denote $\phi \left(P \oplus \left(-I_2\right) \right) = \begin{bmatrix}
		A' & T' \\
		T'^\ast & B'
	\end{bmatrix}$ for some $A' \in \HH_{2(k-1)}$, $B' \in \HH_2$, $T' \in \mm[2(k-1),2]$. It follows from \eqref{tojeto} that $\re \left(T' Z^\ast T^\ast\right) = 0_{2(k-1)}$. After replacing $Z$ by $iZ$ we obtain $T' Z^\ast T^\ast = 0_{2(k-1)}$, yielding that $T' = 0_{2(k-1),2}$. We now deduce from \eqref{ppi} that $A'^2 = P$ and $B' \in \ih[2]$. By applying \eqref{tojeto} again we get $A'TZ + TZB' = 0_{2(k-1),2}$, so $T^\ast A'TZ + ZB' = 0_2$. By Lemma \ref{abh}, there exists $s_P \in \{-1,1\}$ such that $B' = -s_P I_2$. Consequently, $A'T = s_P T$, yielding that $A'P = s_P P$. Hence, $A'$ and $s_P P$ coincide on $\ran P$ and also on $\ker P = \ker A'^2 = \ker A'$. Therefore, $A' = s_P P$.
	
	We now distinguish two cases. First, suppose that $k = 2$. Then $P = T_0 = I_2$. Denote $t_0 = s_{I_2}$. It follows from
		$$
		\tfrac{1}{2}\begin{bmatrix}
				t_0 I_2-H & I_2+s_0H \\
				I_2+s_0H & H-t_0 I_2
			\end{bmatrix} = \phi \left(\tfrac{1}{2}\begin{bmatrix}
				I_2-H & I_2+H \\
				I_2+H & H-I_2
			\end{bmatrix}\right) \in \ihz[4]
		$$
		that $t_0 = s_0$. Thus, if $s_0=1$, then $\phi(A) = A$, $A \in \hz[4]$. Assume now that $s_0 = -1$. Then one can easily calculate by using Example \ref{lk2k} that
		\begin{eqnarray*}
			\phi(A) = -V_0 \overline{A} V_0^\ast, & A \in \hz[4],
		\end{eqnarray*}
		where $V_0 = U_0 \oplus \left(-U_0\right) \in \un[4]$ for $U_0 = \begin{bmatrix}
				0 & 1 \\ -1 & 0
			\end{bmatrix} \in \un[2]$.
		
		Suppose now that $k \ge 3$. Then there exists $Q \in \mathcal P_{2(k-1),2} \setminus \{P\}$. Thus,
	\begin{eqnarray*}
		\left(P-Q\right) \oplus 0_2 = 
		\phi \left(\left(P-Q\right) \oplus 0_2\right) = \\
		\phi \left(P \oplus \left(-I_2\right)\right) - \phi \left(Q \oplus \left(-I_2\right)\right) = \left(s_PP - s_QQ\right) \oplus \left(\left(s_Q - s_P\right) I_2\right).
	\end{eqnarray*}
	Hence, $s_P = 1$. Consequently,
	\begin{eqnarray*}
		\phi \left(A \oplus \left(-\tfrac{\tr A}{2} I_2\right) \right) = A \oplus \left(-\tfrac{\tr A}{2} I_2\right), & A \in \HH_{2(k-1)},
	\end{eqnarray*}
	implying that $\phi(A) = A$, $A \in \hz[2k]$.
\end{proof}

\section{Linear maps sending projections of a fixed finite rank to projections of a finite rank}

In this section, we will study linear maps sending projections of a fixed finite rank to projections of a finite rank. We will make a complete description of such maps in the case when the first Hilbert space is $2$-dimensional.

We start by an observation that linear maps sending projections of a fixed finite rank to projections of a finite rank must send projections of a fixed finite rank to projections of a fixed rank.

\begin{prop}\label{fkm}
	Let $\HH$ and $\KK$ be complex Hilbert spaces and $k \in \NN$, $k < \dim \HH$. Let $\phi \colon \fs \to \fs[\KK]$ be a linear map such that
	$$
	\phi \left(\PH k\right) \subset \PH[\KK]{f}.
	$$
	There exists $m \in \NN \cup \{0\}$, $m \le \dim \KK$, such that $\phi \left(\PH k\right) \subset \PH[\KK]{m}$.
\end{prop}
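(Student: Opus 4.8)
The plan is to show that the non-negative integer $\rk \phi(P)$ does not depend on the choice of $P \in \PH{k}$; the common value $m$ will then automatically satisfy $m \le \dim \KK$, since each $\phi(P)$ is a finite rank projection whose range is an $m$-dimensional subspace of $\KK$. The only real difficulty is that $\fs$ may be infinite-dimensional, so $\phi$ need not be continuous and $\PH{k}$ is not compact. The key device is to reduce every comparison of two rank $k$ projections to a finite-dimensional subspace, where continuity and connectedness are automatic.

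To carry this out, I would fix arbitrary $P, Q \in \PH{k}$ and set $W = \ran P + \ran Q$, a subspace of dimension $N \le 2k$. Let $\mathcal S_W = \{A \in \fs \mid A = P_W A P_W\}$ be the real-linear space of self-adjoint operators supported on $W$. This space is finite-dimensional, and both $P$ and $Q$ lie in the subset $\mathcal P_{N,k}$ of rank $k$ projections it contains. That subset is homeomorphic to the complex Grassmann manifold of $k$-dimensional subspaces of $W$, which is connected (if $N = k$ the subset is a single point and $P = Q$, which is trivial).

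Because $\mathcal S_W$ is finite-dimensional, its image under the real-linear map $\phi$ is a finite-dimensional subspace of $\fs[\KK]$; hence $\phi|_{\mathcal S_W}$ is continuous, and the linear functional $\tr$ restricted to $\phi(\mathcal S_W)$ is continuous as well. Consequently $R \mapsto \tr \phi(R)$ is continuous on $\mathcal P_{N,k}$. Since $\phi(R) \in \PH[\KK]{f}$ is a projection, this function equals $R \mapsto \rk \phi(R)$ and takes values in $\NN \cup \{0\}$. A continuous integer-valued function on the connected set $\mathcal P_{N,k}$ is constant, so $\rk \phi(P) = \rk \phi(Q)$.

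As $P, Q \in \PH{k}$ were arbitrary, $\rk \phi$ is constant on $\PH{k}$; denoting its value by $m$, we obtain $\phi \left(\PH{k}\right) \subset \PH[\KK]{m}$ and $m \le \dim \KK$. I expect the only point requiring care to be the reduction itself, namely recording that $\mathcal P_{N,k}$ is connected and that $\tr$ is continuous on the finite-dimensional image $\phi(\mathcal S_W)$; both become routine once one passes from $\HH$ to the finite-dimensional $W$, so I anticipate no genuine obstacle beyond setting up this reduction cleanly.
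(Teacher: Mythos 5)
Your argument is correct, and it takes a genuinely different route from the paper. The paper compares $\dim f(X)$ and $\dim f(Y)$ via the sets $\hol[a]{X}{Y}$: when $\dg{X}{Y}<1$ it picks $a>1$ with $\dg{X}{Y}\le\tfrac{\sqrt{2a-1}}{a}$, so that $\hol[a]{X}{Y}$ is nonempty and contained in $\G{k}$ by Proposition \ref{prophol} and Corollary \ref{dimhol}; since $f$ maps this set into $\hol[a]{f(X)}{f(Y)}$, the latter is nonempty and Corollary \ref{dimhol} forces $\dim f(X)=\dim f(Y)$; the case $\dg{X}{Y}=1$ is handled by interpolating through a third subspace $Z$ with $\dg{X}{Z}<1$ and $\dg{Y}{Z}<1$. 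You instead compress to the finite-dimensional space of self-adjoint operators supported on $W=\ran P+\ran Q$, where linearity gives continuity of $\phi$ for free, the trace is a continuous functional on the finite-dimensional image, and $R\mapsto\tr\phi(R)=\rk\phi(R)$ is a continuous integer-valued function on the connected Grassmannian $\mathcal P_{N,k}$, hence constant. The one point you should state explicitly rather than leave implicit is the connectedness of the complex Grassmannian (e.g.\ as a continuous image of the connected unitary group $\un[N]$ under $U\mapsto UP_0U^\ast$), but that is standard. Your approach is more elementary and self-contained; the paper's choice is natural in context because the $\hol[a]{X}{Y}$ machinery is developed anyway for Lemma \ref{phraz} and the main theorems, and it yields sharper by-products such as the contractivity of $f$ with respect to the gap metric. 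For this proposition alone, both arguments are complete.
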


\begin{proof}
	Denote by $f$ the induced map $\G k \to \G[\KK]{f}$, i.e.
	\begin{eqnarray*}
		\phi \left(P_X\right) = P_{f(X)}, & X \in \G k.
	\end{eqnarray*}
	Let $X, Y \in \G k$. We need to show that $\dim f(X) = \dim f(Y)$.
	
	Assume first that $\dg{X}{Y} < 1$. Then there exists $a \in (1, \infty)$ such that $\dg{X}{Y} \le \tfrac{\sqrt{2a-1}}{a}$. Proposition \ref{prophol} implies that $\hol[a]{X}{Y} \ne \emptyset$. Corollary \ref{dimhol} yields $\hol[a]{X}{Y} \subset \G k$. Since $\hol[a]{f(X)}{f(Y)}$ contains $f \left( \hol[a]{X}{Y} \right)$, it is not empty. We apply Corollary \ref{dimhol} again to obtain that $\dim f(X) = \dim f(Y)$.
	
	Suppose now that $\dg{X}{Y} = 1$. By Proposition \ref{prophol}, there exists an orthonormal basis $\mathcal B$ of $X+Y$ such that
	$$
	(P_X|_{X+Y})_{\mathcal B} = I_m \oplus I_p \oplus 0_p \oplus \bigoplus_{j=1}^r \begin{bmatrix}
		\tfrac{1+\cos \varphi_j}{2} I_{m_j}  & \tfrac{\sin \varphi_j}{2} I_{m_j} \\
		\tfrac{\sin \varphi_j}{2} I_{m_j} & \tfrac{1-\cos \varphi_j}{2} I_{m_j}
	\end{bmatrix}
	$$
	and
	$$
	(P_Y|_{X+Y})_{\mathcal B} = I_m \oplus 0_p \oplus I_p \oplus \bigoplus_{j=1}^r \begin{bmatrix}
		\tfrac{1+\cos \varphi_j}{2} I_{m_j}  & -\tfrac{\sin \varphi_j}{2} I_{m_j} \\
		-\tfrac{\sin \varphi_j}{2} I_{m_j} & \tfrac{1-\cos \varphi_j}{2} I_{m_j}
	\end{bmatrix}
	$$
	for some $p \in \NN$, $m, r \in \NN \cup \{0\}$, $m_1, \ldots, m_r \in \NN$, and $0 < \varphi_r < \ldots < \varphi_1 < \tfrac{\pi}{2}$. Let $Z \in [X \cap Y, X+Y]$ be such that
	$$
	(P_Z|_{X+Y})_{\mathcal B} = I_m \oplus \left(\tfrac{1}{2} \begin{bmatrix}
		I_p & I_p \\
		I_p & I_p
	\end{bmatrix}\right) \oplus \bigoplus_{j=1}^r \left(I_{m_j} \oplus 0_{m_j}\right)
	$$
	Then $Z \in \G k$, $\dg{X}{Z} < 1$, and $\dg{Y}{Z} < 1$. Consequently, $\dim f(X) = \dim f(Z) = \dim f(Y)$.
\end{proof}

Next, we will show that linear maps sending projections of a fixed finite rank to projections of a finite rank are intimately connected to linear maps sending trace zero hermitian unitary matrices to unitary matrices. We start by the following obvious connection.

\begin{lemma}\label{a0p}
	Let $k \in \NN$ and $A \in \HH_{2k}$. Then
	$$
	A \in \ihz[2k] \iff \tfrac{1}{2}(I_{2k}+A) \in \mathcal P_{2k,k}.
	$$
\end{lemma}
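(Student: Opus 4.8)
The plan is to pass between the two objects through the affine substitution $A = 2P - I_{2k}$, equivalently $P = \tfrac{1}{2}(I_{2k}+A)$, and to translate each of the three defining conditions on one side into a defining condition on the other. Throughout, write $P = \tfrac{1}{2}(I_{2k}+A)$.

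First I would record that hermiticity passes back and forth trivially: since $I_{2k}$ is hermitian, $A = A^\ast$ holds if and only if $P = P^\ast$. Next comes the central algebraic identity, which I would obtain by expanding
$$
P^2 - P = \tfrac{1}{4}(I_{2k}+A)^2 - \tfrac{1}{2}(I_{2k}+A) = \tfrac{1}{4}\left(A^2 - I_{2k}\right).
$$
Thus $P^2 = P$ if and only if $A^2 = I_{2k}$. Now I would invoke the two elementary facts that use hermiticity: a hermitian matrix $A$ is unitary exactly when $A^2 = I_{2k}$ (because then $A^\ast A = A^2 = I_{2k}$, and conversely $A^\ast A = I_{2k}$ forces $A^2 = I_{2k}$), and a hermitian matrix $P$ is a projection exactly when it is idempotent. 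Combining these with the displayed identity shows that $A$ is hermitian unitary if and only if $P$ is a (hermitian idempotent, i.e.) projection.

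It remains to match the trace-zero condition with the rank-$k$ condition. Taking traces in $A = 2P - I_{2k}$ gives $\tr A = 2\tr P - 2k$. When $P$ is a projection, $\tr P = \rk P$, so $\tr A = 2\,\rk P - 2k$; hence $\tr A = 0$ if and only if $\rk P = k$. Assembling the three equivalences yields $A \in \ihz[2k]$ if and only if $P \in \mathcal P_{2k,k}$.

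There is no substantive obstacle here: the argument is a direct computation. The only two points worth stating explicitly are that, under the standing hermiticity, unitarity of $A$ is equivalent to $A^2 = I_{2k}$, and that the trace bookkeeping pins the rank down to exactly $k$ (rather than some other value) precisely because the ambient dimension is $2k$ and the trace is zero.
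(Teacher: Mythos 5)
Your proof is correct and complete; the paper states this lemma without proof (calling it an ``obvious connection''), and your direct computation via $P^2-P=\tfrac{1}{4}(A^2-I_{2k})$ together with the trace/rank bookkeeping is exactly the standard verification one would write out. No gaps.
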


For all $X \in \G{}$ denote
$$
\left\langle X\right] = \left\{ A \in \fs \mid \ran A \subset X \right\}.
$$
\begin{lemma}\label{phraz}
	Let $k \in \NN$, let $\KK$ be a complex Hilbert space, and let $\phi \colon \HH_{2k} \to \fs[\KK]$ be a linear map such that $\phi \left(\mathcal P_{2k,k}\right) \subset \PH[\KK]{f}$. Then there exist:
	\begin{itemize}
		\item $m, p, r \in \NN \cup \{0\}$, $\tfrac{1}{2} < t_1 < \ldots < t_r < 1$, $m_1, \ldots, m_r \in \NN$,
		\item a unital linear map $\phi_0 \colon \HH_{2k} \to \HH_{2p}$ such that $\phi_0 \left(\ihz[2k]\right) \subset \ihz[2p]$,
		\item real linear maps $\phi_j \colon \hz[2k] \to \mm[m_j]$ such that $\phi_j(\ihz[2k]) \subset \un[m_j]$, $j = 1, \ldots, r$,
		\item $Z \in \G[\KK]{f}$ and its orthonormal basis $\mathcal B$,
	\end{itemize}
	such that for all $A \in \HH_{2k}$ we have $\phi(A) \in \left\langle Z\right]$ and
	\begin{eqnarray*}
		\left(\phi (A)|_Z\right)_{\mathcal B} = \left(\tfrac{\tr A}{k} I_m\right) \oplus \phi_0(A) \oplus \\ \bigoplus_{j=1}^r \begin{bmatrix}
		t_j \tfrac{\tr A}{k} I_{m_j}  & \sqrt{t_j(1-t_j)} \, \phi_j \left(2A - \tfrac{\tr A}{k} I_{2k}\right)^\ast  \\
		\sqrt{t_j(1-t_j)} \, \phi_j \left(2A - \tfrac{\tr A}{k} I_{2k}\right)  & (1-t_j) \tfrac{\tr A}{k} I_{m_j}
	\end{bmatrix}.
	\end{eqnarray*}
\end{lemma}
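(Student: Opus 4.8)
The plan is to reduce the problem to a finite-dimensional matrix computation and then to read the block form directly off the spectral decomposition of the single operator $J := \phi(I_{2k})$. For any $P \in \mathcal P_{2k,k}$ we also have $I_{2k}-P \in \mathcal P_{2k,k}$, so $J = \phi(P) + \phi(I_{2k}-P)$ is a sum of two projections of a common rank $n$ (the rank is constant by Proposition \ref{fkm}); hence $J \ge 0$, $\tr J = 2n$, $\|J\| \le 2$, and $J$ has finite rank. I would put $Z := \ran J$ and fix an orthonormal basis $\mathcal B$ of $Z$. Since $0 \le \phi(P) \le J$ forces $\ran \phi(P) \subset Z$, and $\mathcal P_{2k,k}$ spans $\HH_{2k}$, we get $\phi(A) \in \left\langle Z\right]$ for every $A$, so it suffices to analyze the compression of $\phi$ to $Z$.

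The engine of the argument is a pair of operator identities. By Lemma \ref{a0p}, for $A \in \ihz[2k]$ both $\tfrac12(I_{2k}\pm A)$ lie in $\mathcal P_{2k,k}$, so $\tfrac12(J \pm \phi(A))$ are projections. Squaring $\tfrac12(J\pm\phi(A))$, using idempotency, and then subtracting and adding the two resulting equations yields, for all $A \in \ihz[2k]$,
\[ J \circ \phi(A) = 2\phi(A) \qquad\text{and}\qquad \phi(A)^2 = 2J - J^2. \]
The first identity extends by linearity to all $A \in \hz[2k]$, because $\ihz[2k]$ spans $\hz[2k]$. Moreover, comparing traces in $\tfrac12(J+\phi(A)) \in \phi(\mathcal P_{2k,k})$ (which has trace $n$) with $\tr J = 2n$ shows $\tr \phi(A) = 0$ for every $A \in \hz[2k]$.

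Next I would decompose $Z = \bigoplus_\lambda Z_\lambda$ into eigenspaces of $J$, with eigenvalues $\lambda \in (0,2]$. The relation $J \circ \phi(A) = 2\phi(A)$ forces the $(\lambda,\mu)$-block of $\phi(A)$ to vanish unless $\lambda+\mu = 2$; thus $\phi(A)$ is block diagonal for the grouping $Z = Z_2 \oplus Z_1 \oplus \bigoplus_{j}(Z_{\lambda_j}\oplus Z_{2-\lambda_j})$, where $\lambda_1 < \dots < \lambda_r$ are the eigenvalues lying in $(1,2)$. On $Z_2$ the identity $\phi(A)^2 = 2J-J^2 = 0$ gives $\phi(A)|_{Z_2}=0$ for traceless $A$, whence $\phi(A)|_{Z_2} = \tfrac{\tr A}{k} I_m$ with $m = \dim Z_2$. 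On the self-paired space $Z_1$ the compression $\phi_0 := \phi(\cdot)|_{Z_1}$ is unital, satisfies $\phi_0(A)^2 = I$ for $A \in \ihz[2k]$, and has $\tr\phi_0(A) = \tr\phi(A) = 0$ there; hence each $\phi_0(A)$ is a trace-zero hermitian involution, $\dim Z_1 = 2p$ is even, and $\phi_0(\ihz[2k])\subset\ihz[2p]$.

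It remains to treat the generic pairs, which is where the bookkeeping is heaviest. On each $Z_{\lambda_j}\oplus Z_{2-\lambda_j}$ the operator $\phi(A)$ is purely off-diagonal; setting $t_j := \lambda_j/2 \in (\tfrac12,1)$, the identity $\phi(A)^2 = 2J-J^2$ restricted here forces the off-diagonal block $B_j$ of $\phi(A)$ to satisfy $B_j B_j^\ast = 4t_j(1-t_j)I$ and $B_j^\ast B_j = 4t_j(1-t_j)I$ for every $A \in \ihz[2k]$. This simultaneously yields $\dim Z_{\lambda_j} = \dim Z_{2-\lambda_j} =: m_j$ and shows that the normalized block is unitary, so defining $\phi_j$ to be the off-diagonal block scaled by $(2\sqrt{t_j(1-t_j)})^{-1}$ produces a real-linear $\phi_j \colon \hz[2k]\to\mm[m_j]$ with $\phi_j(\ihz[2k])\subset\un[m_j]$. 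Finally, adding back the diagonal contribution $\tfrac{\tr A}{2k} J$ and rewriting the traceless part as $2A - \tfrac{\tr A}{k}I_{2k}$ reproduces the claimed $2\times 2$ block exactly, while the chosen ordering makes $\tfrac12 < t_1 < \dots < t_r < 1$. The main obstacle is not a single deep step but precisely this last verification — extracting the constants $\sqrt{t_j(1-t_j)}$, matching the block dimensions, and confirming the unitarity of $\phi_j$ and the even-dimensionality of $Z_1$ from the idempotency relations — whereas once the two identities of the second paragraph are established, the whole block structure falls out.
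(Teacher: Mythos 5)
Your proposal is correct, and it reaches the conclusion by a genuinely different route from the paper. The paper applies Proposition \ref{prophol} (with $a=1$) to the pair $X=\ran\phi(I_k\oplus 0_k)$, $Y=\ran\phi(0_k\oplus I_k)$: the two-projections machinery (Proposition \ref{halmd} plus the Sylvester-equation Lemma \ref{abinv}) hands over both the canonical basis of $Z=X+Y$ and, since $P_X+P_Y-\phi(R)=\phi(I_{2k}-R)$ is a projection, the form \eqref{pzmat} of $\phi(R)$ for every $R\in\mathcal P_{2k,k}$; the maps $\phi_0,\phi_j$ are then read off and normalized via Lemma \ref{a0p}. You instead work with the single positive operator $J=\phi(I_{2k})=P_X+P_Y$ and extract, directly from idempotency of $\tfrac12(J\pm\phi(A))$ for $A\in\ihz[2k]$, the two identities $J\circ\phi(A)=2\phi(A)$ and $\phi(A)^2=2J-J^2$; the block structure then falls out of the spectral decomposition of $J$ (whose eigenvalues $2,1,2t_j,2(1-t_j)$ are exactly what the paper's canonical basis diagonalizes). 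The two arguments are two faces of the same computation, but yours is more self-contained --- it bypasses Proposition \ref{prophol} entirely except through the citation of Proposition \ref{fkm}, which you do still need for the constancy of rank used in $\tr J=2n$ and $\tr\phi(A)=0$ --- while the paper's version reuses infrastructure it needs anyway elsewhere (Proposition \ref{fkm}, Theorem \ref{2f}). All the steps I checked go through: $\ran\phi(P)\subset\ran J$ from $0\le\phi(P)\le J$, the span of $\ihz[2k]$ being $\hz[2k]$, the forced pairing $\dim Z_\lambda=\dim Z_{2-\lambda}$ (which in particular rules out unpaired eigenvalues in $(0,1)$ via $B_j^\ast B_j=4t_j(1-t_j)I\ne 0$), and the normalization producing exactly the stated $2\times 2$ blocks.
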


\begin{rem}
	If we assumed that $\phi$ is injective, it would not significantly simplify the problem of describing all such possible maps $\phi$, as in order to fulfill this assumption, it is already enough that one of the maps $\phi_0$, $\phi_j$, $j = 1, \ldots, r$, is injective.
\end{rem}

\begin{proof}
	Let $P = I_k \oplus 0_k$, $Q = 0_k \oplus I_k$, $X = \ran \phi(P)$, $Y = \ran \phi(Q)$. By Proposition \ref{prophol}, there exists an orthonormal basis $\mathcal B$ of $X+Y$ such that
	$$
	(P_X|_{X+Y})_{\mathcal B} = I_m \oplus I_p \oplus 0_q \oplus \bigoplus_{j=1}^r \begin{bmatrix}
	t_j I_{m_j}  & \sqrt{t_j\left(1-t_j\right)} I_{m_j} \\
	\sqrt{t_j\left(1-t_j\right)} I_{m_j} & \left(1-t_j\right) I_{m_j}
	\end{bmatrix}
	$$
	and
	$$
	(P_Y|_{X+Y})_{\mathcal B} = I_m \oplus 0_p \oplus I_q \oplus \bigoplus_{j=1}^r \begin{bmatrix}
		t_j I_{m_j}  & -\sqrt{t_j\left(1-t_j\right)} I_{m_j} \\
		-\sqrt{t_j\left(1-t_j\right)} I_{m_j} & \left(1-t_j\right) I_{m_j}
	\end{bmatrix}
	$$
	for some $m, p, q, r \in \NN \cup \{0\}$, $m_1, \ldots, m_r \in \NN$, and $\tfrac{1}{2} < t_1 < \ldots < t_r < 1$. It follows from Proposition \ref{fkm} that $\dim X = \dim Y$. Hence, $p = q$. For any $R \in \mathcal P_{2k,k}$ we have $P_X+P_Y-\phi(R) \in \PH[\KK]{f}$. Proposition \ref{prophol} yields $\ran \phi(R) \in [X \cap Y,X+Y]$ and
	$$
	(\phi(R)|_{X+Y})_{\mathcal B} = I_m \oplus Q \oplus \bigoplus_{j=1}^r \begin{bmatrix}
		t_j I_{m_j}  & \sqrt{t_j\left(1-t_j\right)} U_j^\ast \\
		\sqrt{t_j\left(1-t_j\right)} U_j & \left(1-t_j\right) I_{m_j}
	\end{bmatrix}
	$$
	for some $Q \in \mathcal P_{2p}$ and $U_j \in \un[m_j]$, $j = 1, \ldots, r$. By Proposition \ref{fkm}, $Q \in \mathcal P_{2p,p}$. Thus, there exists a unital linear map $\phi_0 \colon \HH_{2k} \to \HH_{2p}$ such that $\phi_0(\mathcal P_{2k,k}) \subset \mathcal P_{2p,p}$, and for every $j \in \left\{1, \ldots, r\right\}$ there exists a real linear map $\psi_j \colon \HH_{2k} \to \mm[m_j]$ such that $\psi_j(\mathcal P_{2k,k}) \subset \un[m_j]$ and for each $A \in \HH_{2k}$ we have
	$$
	(\phi(A)|_{X+Y})_{\mathcal B} = \left(\tfrac{\tr A}{k}I_m\right) \oplus \phi_0(A) \oplus \bigoplus_{j=1}^r \begin{bmatrix}
		t_j I_{m_j}  & \sqrt{t_j\left(1-t_j\right)} \psi_j(A)^\ast \\
		\sqrt{t_j\left(1-t_j\right)} \psi_j(A) & \left(1-t_j\right) I_{m_j}
	\end{bmatrix}.
	$$
	Lemma \ref{a0p} yields $\phi_0 \left(\ihz[2k]\right) \subset \ihz[2p]$. For each $j \in \{1, \ldots, r\}$, Lemma \ref{a0p} and
	$$
	\psi_j \left(I_{2k}\right) = \psi_j \left(P+Q\right) = I_{m_j} - I_{m_j} = 0_{m_j}
	$$
	imply that we get the required result for the map $\phi_j$, given by $\phi_j(A) = \tfrac{1}{2} \psi_j(A)$, $A \in \hz[2k]$.
\end{proof}

\begin{theorem}\label{2f}
	Let $\HH$ and $\KK$ be complex Hilbert spaces and $\dim \HH = 2$. Let $\phi \colon \sh \to \fs[\KK]$ be a map. The following two statements are equivalent.
	\begin{itemize}
		\item $\phi$ is linear and $\phi \left(\PH{1}\right) \subset \PH[\KK]{f}$.
		\item One of the following statements holds:
		\begin{itemize}
			\item There exists $P_0 \in \PH[\KK]{f}$ such that
			\begin{eqnarray*}
				\phi (A) = (\tr A) P_0, & A \in \fs.
			\end{eqnarray*}
			\item There exist $n \in \NN$, $P_0, Q_0 \in \PH[\CC^n]{}$, $P_0 \ne Q_0$, and a linear isometry $U \colon \HH \otimes \CC^n \to \KK$ such that
			\begin{eqnarray*}
				\phi (A) = U \left(A \otimes P_0 + \left((\tr A) \mathcal I_\HH - A\right) \otimes Q_0 \right) U^\ast, & A \in \sh.
			\end{eqnarray*}
		\end{itemize}
	\end{itemize}
\end{theorem}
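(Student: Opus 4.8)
The reverse implication is immediate and I would dispose of it first: a map of the first type is exactly Example \ref{con} for $k=1$, and a map of the second type is the Example \ref{pq} map $A\mapsto A\otimes P_0+((\tr A)\mathcal I_\HH-A)\otimes Q_0$ followed by the isometric similarity of Example \ref{stu}, so both send $\PH 1$ into $\PH[\KK]{f}$. The substance is the forward implication, and the plan is to feed $\phi$ into the structural decomposition of Lemma \ref{phraz} and then recognize every summand as an instance of Example \ref{con} or Example \ref{pq}. So assume $\phi$ is linear with $\phi(\PH 1)\subset\PH[\KK]{f}$ and identify $\sh$ with $\HH_2$. Since $\dim\HH=2$ is the case $k=1$, Lemma \ref{phraz} produces integers $m,p,r$, scalars $t_j\in(\tfrac12,1)$ and $m_j=2n_j$, a unital $\phi_0\colon\HH_2\to\HH_{2p}$ with $\phi_0(\ihz[2])\subset\ihz[2p]$, real–linear $\phi_j\colon\hz[2]\to\mm[m_j]$ with $\phi_j(\ihz[2])\subset\un[m_j]$, and a subspace $Z\in\G[\KK]{f}$ with orthonormal basis $\mathcal B$, so that $\phi(A)\in\left\langle Z\right]$ and, in $\mathcal B$, $\phi(A)$ is the orthogonal block sum of $(\tr A)I_m$, of $\phi_0(A)$, and of the mixing blocks $B_j(A)$.

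Next I would pin down the three kinds of summand. Applying Corollary \ref{c2m} to $\phi_0|_{\hz[2]}$ and combining with $\phi_0(I_2)=I_{2p}$ rewrites, after a unitary conjugation, $\phi_0(A)=U_0\bigl((A\otimes I_{p'})\oplus(((\tr A)I_2-A)\otimes I_{q'})\bigr)U_0^\ast$ with $p'+q'=p$; this is an Example \ref{pq} map with $P_0,Q_0$ the coordinate projections onto the two blocks (the special cases $p'=0$ or $q'=0$ being Example \ref{tenp}). Proposition \ref{h0u} gives $\phi_j(A)=U_j(A\otimes I_{n_j})V_j$. The constant summand $(\tr A)I_m$ is Example \ref{con}.

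The heart of the argument, and the step I expect to be the main obstacle, is showing that each mixing block $B_j$ is again an Example \ref{pq} map. Absorbing $U_j,V_j$ into a block-diagonal unitary conjugation turns the off-diagonal entry $\sqrt{t_j(1-t_j)}\,\phi_j(2A-(\tr A)I_2)$ of $B_j$ into $2\sqrt{t_j(1-t_j)}\,(A_0\otimes I_{n_j})$, where $A_0=A-\tfrac{\tr A}{2}I_2$, so that after a permutation similarity $B_j(A)=C_j(A)\otimes I_{n_j}$ with
\[
C_j(A)=\begin{bmatrix} t_j(\tr A)I_2 & 2\sqrt{t_j(1-t_j)}\,A_0 \\ 2\sqrt{t_j(1-t_j)}\,A_0 & (1-t_j)(\tr A)I_2 \end{bmatrix}
\]
acting on $\CC^2\otimes\CC^2$. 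Interchanging the two tensor factors, $C_j$ becomes $A\otimes P_0+((\tr A)I_2-A)\otimes Q_0$ for
\[
P_0=\begin{bmatrix} t_j & \sqrt{t_j(1-t_j)} \\ \sqrt{t_j(1-t_j)} & 1-t_j \end{bmatrix},\qquad
Q_0=\begin{bmatrix} t_j & -\sqrt{t_j(1-t_j)} \\ -\sqrt{t_j(1-t_j)} & 1-t_j \end{bmatrix},
\]
which are rank-one projections (trace $1$, determinant $0$) and are distinct because $t_j\in(\tfrac12,1)$. Hence $B_j$ is unitarily equivalent to the Example \ref{pq} map with projections $P_0\otimes I_{n_j}$ and $Q_0\otimes I_{n_j}$. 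Converting the opaque singular-value datum $t_j$ together with the tensor shape of $\phi_j$ into \emph{genuine} rank-one projections via this $2\times 2$ Bloch-sphere computation is exactly where the hypothesis $\dim\HH=2$ is decisive (through Proposition \ref{h0u}, which fails for $k\ge 2$, cf. Example \ref{e2km}).

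Finally I would assemble the summands. Each of the constant part, the $\phi_0$-part, and each $B_j$ is, after a conjugation internal to its summand, an Example \ref{pq} map carrying $A$ on the $\HH$-factor; since $\HH\otimes(-)$ distributes over orthogonal direct sums, a finite direct sum of such maps is a single Example \ref{pq} map whose projections are the block-diagonal sums of the individual $P_0$'s and $Q_0$'s. Folding all the internal conjugations together with the identification of $\left\langle Z\right]$ into a single linear isometry $U\colon\HH\otimes\CC^n\to\KK$, one obtains the second form of the theorem whenever the assembled $P_0$ and $Q_0$ differ, and the first form (pure Example \ref{con}) when every summand is constant, i.e. $P_0=Q_0$. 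The remaining bookkeeping — carrying out the conjugations simultaneously and checking that the constant summand is correctly incorporated — is routine once the block identification above is in hand.
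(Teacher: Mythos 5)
Your proposal is correct and follows essentially the same route as the paper's own proof: apply Lemma \ref{phraz}, normalize $\phi_0$ via Corollary \ref{c2m} together with unitality, normalize each $\phi_j$ via Proposition \ref{h0u}, rewrite each mixing block as $A\otimes P_0+((\tr A)\mathcal I_\HH-A)\otimes Q_0$ with the two rank-one $2\times2$ projections determined by $t_j$, and assemble the summands into a single map of the form in Example \ref{pq}. The only cosmetic difference is that the paper keeps the scalar factor on the left of the Kronecker product and absorbs the unitaries $U_j,V_j$ silently into a change of the basis $\mathcal B$, whereas you make that conjugation explicit.
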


\begin{rem}
	The projections $P_0$ and $Q_0$ can be chosen to be in a canonical form, e.g. one of the forms described in Propositions \ref{halmd} and \ref{prophol}.
\end{rem}

\begin{rem}
	The resulting map $\phi$ is either constant on the set $\PH{1}$ or injective.
\end{rem}

\begin{rem}
	If we replace $U$ in the result by a conjugate-linear isometry, then one can deduce from Example \ref{lk2k} that such obtained map is also of the resulting form.
\end{rem}

\begin{rem}
	If the resulting map $\phi$ is injective, then it may not be a combination of the standard maps from Examples \ref{stu}, \ref{n-k}, \ref{con}, \ref{tenp}. Indeed, non-zero eigenvalues of $\phi \left(\mathcal I_\HH\right)$ are the same as non-zero eigenvalues of $P_0+Q_0$, which may be any number in $(0,2]$ by Proposition \ref{prophol}.
\end{rem}

\begin{proof}
	It is easy to see that the second statement implies the first one.
	
	Assume now that the first statement holds. After choosing an orthonormal basis of $\HH$, we identify $\sh$ with $\HH_2$. By Lemma \ref{phraz} there exist:
	\begin{itemize}
		\item $m, p, r \in \NN \cup \{0\}$, $\tfrac{1}{2} < t_1 < \ldots < t_r < 1$, $m_1, \ldots, m_r \in \NN$,
		\item a unital linear map $\phi_0 \colon \HH_2 \to \HH_{2p}$ such that $\phi_0 \left(\ihz[2]\right) \subset \ihz[2p]$,
		\item real linear maps $\phi_j \colon \hz[2] \to \mm[m_j]$ such that
		$\phi_j(\ihz[2]) \subset \un[m_j]$, $j = 1, \ldots, r$,
		\item $Z \in \G[\KK]{f}$ and its orthonormal basis $\mathcal B$,
	\end{itemize}
	such that for all $A \in \HH_2$ we have $\phi(A) \in \left\langle Z\right]$ and
	$$
	\left(\phi (A)|_Z\right)_{\mathcal B} = \left((\tr A) I_m\right) \oplus \phi_0(A) \oplus \bigoplus_{j=1}^r \psi_j(A),
	$$
	where
	$$
	\psi_j(A) = \begin{bmatrix}
		t_j (\tr A) I_{m_j}  & \sqrt{t_j(1-t_j)} \, \phi_j \left(2A - (\tr A) I_2\right)^\ast  \\
		\sqrt{t_j(1-t_j)} \, \phi_j \left(2A - (\tr A) I_2\right)  & (1-t_j) (\tr A) I_{m_j}
	\end{bmatrix}.
	$$
	By Corollary \ref{c2m}, there exists $q \in \{0, 1, \ldots, p\}$ such that we may assume after a change of $\mathcal B$ that
	\begin{eqnarray*}
		\phi_0(A) = \left(A \otimes I_q\right) \oplus \left(-A \otimes I_{p-q}\right), & A \in \hz[2].
	\end{eqnarray*}
	Since $\phi_0$ is unital,
	\begin{eqnarray*}
		\phi_0(A) = \left(A \otimes I_q\right) \oplus \left(\left((\tr A)I_2-A\right) \otimes I_{p-q}\right), & A \in \HH_2.
	\end{eqnarray*}
	Let now $j \in \{1, \ldots, r\}$. By Proposition \ref{h0u}, there exists $n_j \in \NN$ such that $m_j=2n_j$ and we may assume after a change of $\mathcal B$ that
	\begin{eqnarray*}
		\phi_j(A) = A \otimes I_{n_j} & A \in \hz[2].
	\end{eqnarray*}
	Hence, for all $A \in \HH_2$ we have
	\begin{eqnarray*}
	\psi_j(A) =
	\left(\begin{bmatrix}
		t_j & \sqrt{t_j(1-t_j)} \\
		\sqrt{t_j(1-t_j)} & 1-t_j
	\end{bmatrix}
	\otimes A + \right. \\
	\left. \begin{bmatrix}
		t_j & -\sqrt{t_j(1-t_j)} \\
		-\sqrt{t_j(1-t_j)} & 1-t_j
	\end{bmatrix} \otimes \left((\tr A) I_2 - A\right)\right) \otimes I_{n_j}.
	\end{eqnarray*}
	If $p=r=0$, then we get the first possible form for $\phi$. If $p+r>0$, then for $n = m+p+\sum_{j=1}^{r}m_j$ we get the other form after permuting $\mathcal B$, and choosing $P_0$ and $Q_0$ corresponding to the respective matrices
	$$
	I_m \oplus I_q \oplus 0_{p-q} \oplus \bigoplus_{j=1}^r \bigoplus_{i=1}^{n_j} \begin{bmatrix}
		t_j & \sqrt{t_j(1-t_j)} \\
		\sqrt{t_j(1-t_j)} & 1-t_j
	\end{bmatrix}
	$$
	and
	$$
	I_m \oplus 0_q \oplus I_{p-q} \oplus \bigoplus_{j=1}^r \bigoplus_{i=1}^{n_j} \begin{bmatrix}
		t_j & -\sqrt{t_j(1-t_j)} \\
		-\sqrt{t_j(1-t_j)} & 1-t_j
	\end{bmatrix}.
	$$
\end{proof}

\section{Linear preservers of rank $k$ projections}

In this section, we will describe linear maps which send projections of a fixed finite rank to projections of the same or smaller rank.

\begin{theorem}\label{phkk}
	Let $\HH$ and $\KK$ be complex Hilbert spaces, $k \in \NN$, $k < \dim \HH$, $m \in \{0, 1, \ldots, k\}$, and let $\phi \colon \fs \to \fs[\KK]$ be a map. Then the following two statements are equivalent.
	\begin{itemize}
			\item $\phi$ is linear and
			\begin{equation}\label{pphk}
				\phi (\PH{k}) \subset \PH[\KK]{m}.
			\end{equation}
			\item One of the following statements holds:
			\begin{itemize}
					\item There exists $P_0 \in \PH[\KK]{m}$ such that
				\begin{eqnarray}\label{pcon}
						\phi (A) = \tfrac{\tr A}{k} P_0, & A \in \fs.
					\end{eqnarray}
				\item $m = k$ and there exists a linear or conjugate-linear isometry $U \colon \HH \to \KK$ such that
				\begin{eqnarray*}
						\phi (A) = U A U^\ast, & A \in \fs.
					\end{eqnarray*}
			\item $k \ge 2$, $\dim \HH = k+m$, and there exists a linear or conjugate-linear isometry $U \colon \HH \to \KK$ such that
		\begin{eqnarray*}
		\phi (A) = U \left(\tfrac{\tr A}{k} \mathcal I_\HH - A\right) U^\ast, & A \in \fs.
		\end{eqnarray*}
				\end{itemize}
		\end{itemize}
\end{theorem}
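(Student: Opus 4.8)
The plan is to reduce the general statement to results already established, principally Theorem \ref{ppt}, Theorem \ref{iho}, and the structural Lemma \ref{phraz}, by a careful bookkeeping argument that tracks how rank behaves under $\phi$. The backbone of the proof is the observation that the hypothesis \eqref{pphk} forces $\phi$ to respect the geometry of the Grassmann space via the sets $\hol[a]{X}{Y}$ studied in Section \ref{hola}, and that when $m = k$ the map is injective on $\PH k$ (or else constant), reducing to Pankov's Theorem \ref{ppt}, while when $m < k$ one must squeeze out extra rigidity to show $\phi$ is forced to be constant unless the dimension equation $\dim \HH = k+m$ permits the complementation map.

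First I would dispose of the trivial implication: each of the three listed forms is manifestly linear and sends $\PH k$ into $\PH[\KK]{m}$, using Example \ref{con} for the constant case, Example \ref{stu} for the isometric case, and Example \ref{n-k} together with Example \ref{stu} for the complementation case (noting $\rk\bigl(\tfrac{\tr A}{k}\mathcal I_\HH - A\bigr) = \dim\HH - k = m$ when $A\in\PH k$). For the substantive direction, assume $\phi$ is linear with $\phi(\PH k)\subset\PH[\KK]{m}$. The first reduction is to the finite-dimensional restriction: for any $2k$-dimensional subspace $\mathcal W\subset\HH$ with associated $\HH_{2k}\cong\langle\mathcal W]$-type embedding, apply Lemma \ref{phraz} to the restriction of $\phi$ to rank $k$ projections supported on $\mathcal W$. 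This produces the canonical block decomposition involving a unital $\phi_0$ with $\phi_0(\ihz[2k])\subset\ihz[2p]$ and auxiliary maps $\phi_j$ with $\phi_j(\ihz[2k])\subset\un[m_j]$. The key leverage is the rank count: from the block form, $\rk\phi(R) = m_0\text{-part} + p + \sum_j m_j$ for $R\in\PH k$ (here $m$ in Lemma \ref{phraz} denotes the identity block), and this must equal the globally fixed value $m\le k$.

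The central step is then to apply Theorem \ref{iho} to the unital map $\phi_0$ on $\hz[2k]$, which identifies $\phi_0$ on trace-zero matrices as $s\,UAU^\ast$ or $s\,U\overline A U^\ast$, and to combine this with the rank constraint $m\le k$ to eliminate the nontrivial auxiliary blocks. Concretely, Proposition \ref{hura} forces $m_j\ge 2k$ whenever $\phi_j\ne 0$, which together with the fixed-rank bound $m\le k$ (so the total output rank on a $2k$-block is at most $k$, far too small to accommodate any block of size $\ge 2k$) shows that $r=0$ and $p\le$ the available rank budget. This pins $\phi$ down to have image governed entirely by $\phi_0$, i.e. by the $s\,UAU^\ast$ or $s\,U\overline A U^\ast$ alternative, after which a trace-normalization and globalization argument (patching the local $U$'s across overlapping $2k$-dimensional subspaces, in the spirit of how Theorem \ref{ppt} is assembled) yields either a genuine isometry $U\colon\HH\to\KK$ with $\phi(A)=UAU^\ast$ and $m=k$, or the complementation form forcing $\dim\HH = k+m$, or the degenerate all-blocks-trivial situation giving the constant map \eqref{pcon}.

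The hard part will be the globalization and the case $m<k$. Patching the unitaries $U$ obtained on each local $2k$-dimensional piece into a single global isometry requires checking compatibility on overlaps and ruling out that the sign $s$ or the conjugation alternative varies from piece to piece; I would handle this by a connectedness argument on the Grassmannian, using that $\hol[a]{X}{Y}$ is nonempty and of constant dimension (Corollary \ref{dimhol}) precisely when $X,Y$ are close in the gap metric, so that local data glue along chains of nearby subspaces. The genuinely delicate point is showing that when $m<k$ the only survivors are the constant map and the complementation map with the rigid dimension constraint $\dim\HH=k+m$: here one must use that the complementation alternative in Theorem \ref{iho} (the $\overline A$ form) is available only for $k\ge2$, and that $\rk\bigl(\tfrac{\tr A}{k}\mathcal I_\HH - A\bigr)=\dim\HH-k$ equals $m$ only under the stated dimension equation, while any residual freedom collapses via Lemma \ref{a0p} back into the constant case. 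I expect this dimension-counting rigidity, rather than any single computation, to be the main obstacle, since it is where the global hypothesis $m\le k$ interacts most subtly with the local block structure.
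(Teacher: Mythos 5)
Your local analysis is the one the paper uses: restrict $\phi$ to $\left\langle W\right]$ for $W \in \G{2k}$, apply Lemma \ref{phraz}, and combine the rank identity $m' + p + \sum_j m_j = m \le k$ with Proposition \ref{hura} to kill the blocks $\phi_j$ and feed $\phi_0$ into Theorem \ref{iho}. But there are two genuine gaps. First, you never reduce to the case $\dim \HH \ge 2k$, and without that reduction your plan cannot even start when $\dim \HH < 2k$: there is no $W \in \G{2k}$ to restrict to. This is not a technicality; it is exactly where the third form comes from. The paper pre-composes $\phi$ with $A \mapsto \tfrac{\tr A}{n-k}\mathcal I_\HH - A$ when $n = \dim\HH < 2k$, which converts the complementation form into the isometric one, and once $\dim\HH \ge 2k$ the rank count shows that $p>0$ forces $p \ge k$ (Proposition \ref{hura}) and hence $m'=0$ and $p=m=k$. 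So for $m<k$ and $\dim\HH \ge 2k$ \emph{every} $W$ is of constant type, and the ``delicate dimension-counting rigidity'' you expect in the $m<k$ case simply does not occur there --- you have mislocated the difficulty.

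Second, your globalization is only a gesture. Patching local unitaries ``along chains of nearby subspaces'' via connectedness of the Grassmannian does not address the actual hard point, which is the possible \emph{mixing of types}: a priori some $W$ could be of constant type while others carry an isometry or a complementation, and subspaces of different types cannot be glued. The paper resolves this by a trichotomy: if one $W_0$ is of constant type, a perpendicularity argument (for $P\perp Q$ one has $\phi(P)=\phi(Q)$ or $\phi(P)\perp\phi(Q)$, while $P\not\perp Q$ forces $\phi(P)\not\perp\phi(Q)$) propagates constancy to all of $\PH{k}$; a mixture of isometric and complementation types on adjacent $2k$-dimensional subspaces is shown to contradict injectivity; and when all $W$ are of isometric type the induced map on $\G{1}$ preserves inclusions into sums and orthogonality, so the global isometry comes from the fundamental theorem of projective geometry (Faure's theorem), not from patching unitaries. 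Your fallback of citing Theorem \ref{ppt} also does not apply as stated, since that theorem concerns maps $\fs \to \fs$ over the same space $\HH$. These omitted steps are the substance of the proof rather than routine verifications.
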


\begin{proof}
	One implication is clear, so assume that $\phi$ is linear and fulfills \eqref{pphk}.
	Suppose first that $\dim \HH < 2k$. Denote $n = \dim \HH$ and let
	\begin{eqnarray*}
		\psi(A) = \phi \left(\tfrac{\tr A}{n-k} \mathcal I_\HH - A\right), & A \in \fs.
	\end{eqnarray*}
	Then $\dim \HH > 2(n-k)$ and $\psi \colon \fs \to \fs[\KK]$ is a linear map such that $\psi(\PH{n-k}) \subset \PH[\KK]{m}$ and
	\begin{eqnarray*}
		\phi(A) = \psi \left(\tfrac{\tr A}{k} \mathcal I_\HH - A\right), & A \in \fs.
	\end{eqnarray*}
	It is now easy to deduce that if $\psi$ is of any of the above described three forms, then so is $\phi$. We have thus established that it is enough to consider the case $\dim \HH \ge 2k$.
	
	So, suppose that $\dim \HH \ge 2k$. Let $W \in \G{2k}$ and identify $\left\langle W\right]$ with $\HH_{2k}$. By Lemma \ref{phraz} there exist:
	\begin{itemize}
		\item $m', p, r \in \NN \cup \{0\}$, $\tfrac{1}{2} < t_1 < \ldots < t_r < 1$, $m_1, \ldots, m_r \in \NN$,
		\item a unital linear map $\phi_0 \colon \HH_{2k} \to \HH_{2p}$ such that $\phi_0 \left(\ihz[2k]\right) \subset \ihz[2p]$,
		\item real linear maps $\phi_j \colon \hz[2k] \to \mm[m_j]$ such that $\phi_j(\ihz[2k]) \subset \un[m_j]$, $j = 1, \ldots, r$,
		\item $Z \in \G[\KK]{f}$ and its orthonormal basis $\mathcal B$,
	\end{itemize}
	such that for all $A \in \HH_{2k}$ we have $\phi(A) \in \left\langle Z\right]$ and
	\begin{eqnarray*}
		\left(\phi (A)|_Z\right)_{\mathcal B} = \left(\tfrac{\tr A}{k} I_{m'}\right) \oplus \phi_0(A) \oplus \\ \bigoplus_{j=1}^r \begin{bmatrix}
		t_j \tfrac{\tr A}{k} I_{m_j}  & \sqrt{t_j(1-t_j)} \, \phi_j \left(2A - \tfrac{\tr A}{k} I_{2k}\right)^\ast  \\
		\sqrt{t_j(1-t_j)} \, \phi_j \left(2A - \tfrac{\tr A}{k} I_{2k}\right)  & (1-t_j) \tfrac{\tr A}{k} I_{m_j}
	\end{bmatrix}.
	\end{eqnarray*}
It follows from \eqref{pphk} that
\begin{equation}\label{sumk}
	m' + p + \sum_{j=1}^{r} m_j = m \le k.
\end{equation}
If $r > 0$, then Proposition \ref{hura} yields $m_j \ge 2k$, $j = 1, \ldots, r$, contradicting \eqref{sumk}. Thus, $r = 0$.

If $p = 0$, then \eqref{sumk} yields $m' = m$, so there exists $P_0 \in \PH[\KK]{m}$ such that
\begin{eqnarray*}
	\phi(A) = \tfrac{\tr A}{k} P_0, & A \in \left\langle W\right].
\end{eqnarray*}
In this case we say that $W$ is of type (I).

If $p > 0$, then Proposition \ref{hura} implies $p \ge k$, so we obtain from \eqref{sumk} that $m'=0$ and $p=m=k$. Thus, $\dim Z = 2k$. We apply Theorem \ref{iho}, Example \ref{lk2k}, and $\phi_0 \left(I_{2k}\right) = I_{2k}$ to show that there exists a linear or conjugate-linear isometry $U \colon W \to \KK$ such that $\ran U = Z$ and we have either
\begin{eqnarray*}
	\phi(A) = UA|_WU^\ast, & A \in \left\langle W\right],
\end{eqnarray*}
(in which case we say that $W$ is of type (II)) or $k \ge 2$ and
\begin{eqnarray*}
	\phi(A) = \tfrac{\tr A}{k} P_Z - UA|_WU^\ast, & A \in \left\langle W\right],
\end{eqnarray*}
(in which case we say that $W$ is of type (III)).

If $\dim \HH = 2k$, then the proof is complete. Assume now that $\dim \HH > 2k$. For $P, Q \in \PH{k}$ we write $P \perp Q$ if $PQ = 0$ (which is equivalent to $\ran P \perp \ran Q$). We have in particular shown that:
\begin{itemize}
	\item If $W \in \G{2k}$, then $\phi|_{\left\langle W\right]}$ is either injective or constant on $\PH{k} \cap \left\langle W\right]$.
	\item If $P, Q \in \PH{k}$ are such that $P \perp Q$, then we have either $\phi(P) = \phi(Q)$ or $\phi(P) \perp \phi(Q)$.
	\item If $P, Q \in \PH{k}$ are such that $P \not \perp Q$, then $\phi(P) \not \perp \phi(Q)$.
\end{itemize}
We now distinguish three cases.

\setcounter{casec}{0}
\begin{case}{There exists $W_0 \in \G{2k}$ of type (I)}\end{case}
	Let $P_0 \in \PH[\KK]{m}$ such that
	\begin{eqnarray*}
		\phi(A) = \tfrac{\tr A}{k} P_0, & A \in \left\langle W_0\right].
	\end{eqnarray*}
	It remains to show that $\phi(P) = P_0$, $P \in \PH{k}$. So, let $P \in \PH{k} \setminus \left\langle W_0\right]$. Then there exist $Q, R \in \PH{k} \cap \left\langle W_0\right]$ such that $P \perp R$ and $Q \perp R$. Let $P' \in \PH{k}$ such that $\ran P' \subset \ran P + \ran Q$, $P' \ne Q$, and $P' \not \perp Q$. The latter yields $\phi(P') \not \perp \phi(Q) = \phi(R)$. Since $P' \perp R$, this implies $\phi(P') = \phi(R) = P_0$. If $\ran P \not \perp W_0$, then we can choose $Q$ and $R$ above such that $P \not \perp Q$, so we can set $P'$ to be $P$. If $\ran P \perp W_0$, then $P \perp Q$ and $P+Q-P'$ fulfills the above assumptions on $P'$. Consequently, $\phi(P) = \phi(P+Q-P') = P_0$.

\begin{case}{All $W \in \G{2k}$ are of type (II) or (III) and there exists $W_0 \in \G{2k}$ of type (III)}\end{case}
In particular, $k \ge 2$ and
\begin{equation}\label{inj}
	\phi|_{\PH{k}} \textrm{ is injective.}
\end{equation}
There exist $Z_0 \in \G[\KK]{2k}$ and a linear or conjugate-linear isometry $U \colon W_0 \to \KK$ such that $\ran U = Z_0$ and
\begin{eqnarray*}
	\phi(A) = \tfrac{\tr A}{k} P_{Z_0} - UA|_{W_0}U^\ast, & A \in \left\langle W_0\right].
\end{eqnarray*}
Let $W \in \G{2k}$ be adjacent to $W_0$, i.e. $\dim (W_0 \cap W) = 2k-1$.
Then
\begin{eqnarray*}
	\ran P \subsetneq \ran Q \implies \ran \phi(Q) \subsetneq \ran \phi(P), & P, Q \in \PH{} \cap \left\langle W_0 \cap W\right].
\end{eqnarray*}
Since $k > 1$, we have $\dim (W_0 \cap W) > 1$, so $W$ must be of type (III), yielding the existence of $Z \in \G[\KK]{2k}$ and a linear or conjugate-linear isometry $V \colon W \to \KK$ such that $\ran V = Z$ and
\begin{eqnarray*}
	\phi(A) = \tfrac{\tr A}{k} P_Z - VA|_WV^\ast, & A \in \left\langle W\right].
\end{eqnarray*}
Then for all $X \in \G[W_0 \cap W]{k}$ we have
$$
Z \cap V(X)^\bot = \ran \phi \left(P_X\right) = Z_0 \cap U(X)^\bot.
$$
Let $X, X' \in \G[W_0 \cap W]{k}$ such that $\dim(X \cap X') = 1$. Since $k < 2k-1$, there exists $X'' \in \G[W_0 \cap W]{k}$ such that $X \cap X' \cap X'' = \{0\}$. Then
$$
Z = \ran \phi \left(P_X\right) + \ran \phi \left(P_{X'}\right) + \ran \phi \left(P_{X''}\right) = Z_0.
$$
Therefore,
$$
	\phi \left(\PH{k} \cap \left\langle W_0\right]\right) = \PH[\KK]{k} \cap \left\langle Z_0\right] = \PH[\KK]{k} \cap \left\langle Z\right] = \phi \left(\PH{k} \cap \left\langle W\right]\right),
$$
contradicting \eqref{inj}.

\begin{case}{All $W \in \G{2k}$ are of type (II)}\end{case}
Then $\phi \left(\PH{1}\right) \subset \PH[\KK]{1}$. Let $g \colon \G{1} \to \G[\KK]{1}$ be the map such that
\begin{eqnarray*}
	\phi \left(P_X\right) = P_{g(X)}, & X \in \G{1}.
\end{eqnarray*}
Let $X, X', X'' \in \G{1}$. Since there exists $W \in \G{2k}$ such that $X' + X'' \subset W$ and $W$ is of type (II), we have implications
$$
X \subset X' + X'' \implies g(X) \subset g(X') + g(X'')
$$
and
$$
X' \perp X'' \implies g(X') \perp g(X'').
$$
Because $\dim \HH \ge 2k+1 \ge 3$, there exists a linear or conjugate-linear isometry $V \colon \HH \to \KK$ such that
\begin{eqnarray*}
	g(X) = V(X), & X \in \G{1},
\end{eqnarray*}
see e.g. \cite[Theorem 4.1]{faure}, yielding that $\phi(A) = VAV^\ast$, $A \in \fs$.
\end{proof}

\noindent {\bf Acknowledgements.} The author was supported by the Slovenian Research and Innovation Agency program P1-0288.

\bibliographystyle{amsalpha}
\bibliography{reference}
\clearpage
\addcontentsline{toc}{chapter}{Bibliography}

\end{document}